\documentclass[12pt,a4paper]{amsart}
\usepackage[dvipsnames]{xcolor}
\usepackage{amsmath,amsfonts,amsthm,amssymb,graphics,graphicx,dsfont,verbatim,quiver,mathtools,adjustbox,cases,float}
\usepackage{faktor,mathrsfs,mathabx,bm,relsize}

\newtheorem{thm}{Theorem}[section]
\newtheorem{defin}[thm]{Definition}

\newtheorem{lem}[thm]{Lemma}

\newtheorem{prop}[thm]{Proposition}

\theoremstyle{remark} \newtheorem{remark}[thm]{Remark}

\newcommand{\Tr}{\text{Trace}}
\newcommand{\ee}{\boldsymbol{e}}

\newcommand{\R}{\mathbb{R}}
 \newcommand{\F}{\boldsymbol{F}}
 \newcommand{\G}{\boldsymbol{G}}
\DeclareRobustCommand{\e}[1]{\underleftrightarrow{\eta_{#1}}}
\newcommand{\bsigma}{\boldsymbol{\sigma}}

 \newcommand{\C}{\mathbb{C}}
\newcommand{\N}{\mathbb{N}}

\newcommand{\B}{\mathbb{B}}
\renewcommand{\P}{\mathbb{P}}
\newcommand{\T}{\boldsymbol{T}}
\newcommand{\bdelta}{\boldsymbol{\delta}}
\newcommand\Id{\operatorname{Id}}
\newcommand{\U}{{\boldsymbol{U}}}
\newcommand{\bmu}{\boldsymbol{\mu}}
\newcommand{\norm}[1]{\left|\!\left|{#1}\right|\!\right|}
\newcommand{\1}{\ensuremath{\mathds{1}}}
\newcommand{\btau}{\boldsymbol{\tau}}
 \newcommand{\CC}{\mathfrak{C}}
\newcommand{\weave}{\mathscr{W}}
\renewcommand{\SS}{\mathfrak{S}}

\newcommand{\simga}{\sigma}

\title[Simultaneous saturation]{The principle of simultaneous saturation: Application to the $k$-linear restriction/extension problem}
\author{Melissa Tacy}

\begin{document}

\begin{abstract}

This paper develops a new framework, \emph{simultaneous saturation}, designed to quantify the size of sets whose elements are simultaneously large.  The framework establishes a correspondence between the magnitude of such sets and a system of interdependent conditions linking their points. We first prove a general theorem establishing the correspondence and then apply the framework to multilinear restriction-type estimates.  From this perspective, we obtain a new proof (independent of Bennett-Carbery-Tao \cite{BCT}) of the $d$-linear restriction/extension theorem, and establish the $\lambda^{\epsilon}$ loss conjectured bounds for the $k$-linear $L^{2}\to L^{p/k}$ extension problem under mixed transversality/curvature conditions $(k<d)$.

\end{abstract}
\maketitle

Analysts frequently wish to control the size of objects, or families of objects.  For instance, to study the mapping properties of an operator $T:L^{q}(X)\to L^{p}(Y)$, one seeks to understand the size of the image set
\[\{Tf(p)\mid p\in Y, f\in L^{q}(X)\}.\]
In many cases it is relatively straightforward to determine a sharp supremum bound describing how large $|Tf(p)|$ can be at a single point. Such an estimate is said to be \emph{sharp} if it can be attained. That is if there exist $f$ and $p$ such that $|Tf(p)|=B$.  In parameter-dependent settings one often weakens the notion of saturation a little, requiring only that $|Tf(p)|\ge cB$ for some universal constant $c>0$.

Typically, many pairs $(f,p)$ can saturate the bound, and there is an interdependence between the function $f$ and the point $p$.  To obtain a more refined understanding of the \emph{saturation set}
\[\{Tf(p)\mid (f,p)\text{ saturates}\},\]
one must ask whether simultaneous saturation can occur. That is, whether a single function $f$ can satisfy $|Tf(p_{j})|=B$ at many distinct points $(p_{1},\dots,p_{N})$.

If such simultaneous saturation occurs, then any average of the values $\{|Tf(p_{1})|,\dots,|Tf(p_{N})|\}$ must also attain the $L^{\infty}$ bound.  The simplest choice is to consider the arithmetic average,
\[\frac{1}{N}\sum_{j=1}^{N}\alpha_{j}Tf(p_{j}), \qquad |\alpha_{j}|=1.\]
One might then attempt to show that this average is necessarily smaller than the saturation bound, perhaps through cancellations among the terms.  This approach, however, has limitations.  If some of the values $|Tf(p_{j})|$ are large while others are small, the average may still be large; arithmetic averaging is not sensitive to the presence of small data.  Moreover, exploiting cancellations in such sums is in itself a delicate and often intractable problem.

Rather than considering arithmetic averages, we instead propose to study geometric averages,
\[\left(\prod_{j=1}^{N}|Tf(p_{j})|\right)^{1/N}.\]
Geometric averages are significantly more sensitive to small values: even a single small term reduces the product substantially. A zero value obliterates it.

The guiding principle in the simultaneous saturation framework is to establish a quantitative barrier on the number of points at which simultaneous saturation can occur (i.e. derive an upper bound on $N$).  To this end, we consider the product
\begin{equation}\label{prodp}
\prod_{j=1}^{N}Tf(p_{j}),
\end{equation}
which may be viewed as the evaluation of the tensor product operator
\[
\T=\underbrace{T\otimes\cdots\otimes T}_{N\text{ times}}
\quad\text{acting on}\quad
\F=\underbrace{f\otimes\cdots\otimes f}_{N\text{ times}}\]
at the point $P=(p_{1},\dots,p_{N})$.  Since the product \eqref{prodp} is symmetric in the indices, we may freely permute the ordering of the points.  Accordingly, we construct a finite measure space generated by point masses supported on the permutations of $P$, denoted $P_{\sigma}=(p_{\sigma(1)},\dots,p_{\sigma(N)}).$

Within this framework, we need to record whether two pairs of points can be made simultaneously large. This is achieved via an energy matrix. The $(\sigma,\tau)$ entry of the energy matrix gives us information about whether $\T\F$ can be simultaneously large at $P_{\sigma}$ and $P_{\tau}$. In this paper we construct the energy matrix from the operator $\T\T^{\star}$. From the energy matrix, we will show that if N is too large, we would be able to create some illegal ensembles. The ensembles are created by considering paths through the entries occurring in the matrix. For instance we might connect $P_{\sigma_{1}}$ to $P_{\sigma_{100}}$ (the $(\sigma_{1},\sigma_{100})$ entry of the matrix tells us if this is possible) then $P_{\sigma_{100}}$ to $P_{\sigma_{54}}$ (the $(\sigma_{100},\sigma_{54})$ entry of the matrix determines whether this connection is possible). If both connections are possible we have an ensemble 
$$P_{\sigma_{1}}\xleftrightarrow {} P_{\sigma_{100}} \xleftrightarrow{} P_{\sigma_{54}}.$$
For this ensemble to be possible we require that both the $(\sigma_{1},\sigma_{100})$ and the $(\sigma_{100},\sigma_{54})$ entries of the matrix be large. The illegality of an ensemble arises from the properties of the operator $T$ and the points appearing in the ensemble. 

In this paper the illegal ensembles will be loops of points that are illegal due to geometric constraints. To fully demonstrate the technique it is, at this point, helpful to specialise to a particular example.  Suppose for example we consider (as we will do in Section \ref{sec:ML}) the multilinear restriction problem. This problem is closely related to the celebrated Fourier restriction/extension problem, see \cite{Ben14} for a good discussion of the relationship. While the Fourier restriction/extension problem is stated in terms of curvature, the multilinear restriction problem is about transversality. In this problem we consider the multilinear operator
\[\prod_{m=1}^{d}T_{m}f_{m}\quad f_{m}\in L^{2}(H_{m})\]
where
\[T_{m}f(x)=\int_{H_{m}}e^{i\lambda\langle x,\xi\rangle}f(\xi)d\mu_{m}(\xi)\quad x\in\R^{d}\]
where $H_{m}\subset\R^{d}$ is a hypersurface and $\mu_{m}$ its hypersurface measure. The set of hypersurfaces $\{H_{m}\}$ are assumed to have normals $\nu_{m}$ so that
\[|\nu_{1}\wedge\cdots\wedge\nu_{d}(x)|\geq{}C.\]
Since we are studying a multilinear problem, we want to estimate
\[\norm{\prod_{m=1}^{d}T_{m}f_{m}}_{L^{p/d}}\]
in terms of
\[\prod_{m=1}^{d}\norm{f}_{L^{2}}.\]

For the multilinear restriction estimate, we build our illegal ensembles out points connected by directions. First we need the energy matrix. In this case, the energy matrix records information on whether points can share $L^{2}$ energy. For the multilinear restriction problem, it is well known what this relationship (for a pair of points) is. If $\e{m}(x,y)$ is the kernel of $T_{m}T_{m}^{\star}$ simple non-stationary phase results tell us that for points $p_{j_{1}}$ and $p_{j_{2}}$ to share energy ($|\e{m}(p_{j_{1}},p_{j_{2}})|$ is large) it must be the case that
\begin{equation}\frac{p_{j_{1}}-p_{j_{2}}}{|p_{j_{1}}-p_{j_{2}}|}\approx\nu_{m}.\label{pwshare}\end{equation}
If \eqref{pwshare} does not hold then $\e{m}(x,y)$ rapidly decays. That is
\[|\e{m}(x,y)|\leq C_{R}\lambda^{-R}\text{ for any }R\in\N.\]
The energy matrix contains all the information about pointwise energy sharing for each value of $(m,j)$. 

Consider now just the three dimensional case. Suppose we can find three points $\{p_{j_{1}},p_{j_{2}},p_{j_{3}}\}$ so that
\[p_{j_{1}}\xleftrightarrow[\nu_{1}]{}p_{j_{2}}\xleftrightarrow[\nu_{2}]{}p_{j_{3}}\xleftrightarrow[\nu_{3}]{}p_{j_{1}}\]
where by $\xleftrightarrow[\nu_{m}]{}$ we mean that \eqref{pwshare} holds for $\nu_{m}$. If $\{\nu_{1},\nu_{2},\nu_{3}\}$ form a spanning set for $\R^{3}$ (as they must from the transversality condition) we have constructed an illegal ensemble. Why? Three points sitting in $\R^{3}$ must lie in a plane, so they cannot be connected in a loop by vectors that give a spanning set for $\R^{3}$. This constraint can be easily generalised to higher dimensions by constructing loops involving $d$ points and the $d$ coordinate directions. Of course if all the points happen to be the same we can no longer talk sensibly about the direction between them so the trivial loop is  always legal.

This heuristic guides the numerology multilinear restriction. There are $N^{d}$ different ways to construct loops out of $N$ points, only $N$ of those are trivial. So if we average over all loops, but only the trivial loops are legal (and therefore they are the only ones that can contribute) we expect to see an $N^{-(d-1)}$ improvement in the $TT^{\star}$ bound. Therefore an $N^{-\frac{d-1}{2}}$ improvement in the bound on $T$, matching the critical $p=\frac{2}{d-1}$ numerology of the multilinear restriction estimates. 

Here we connect points by directions, but other connections are possible. The results of Section \ref{sec:simsat} provide a quite general framework for structuring a simultaneous saturation argument in terms illegal loops of conditions.

It is worth noting that taking geometric averages to measure simultaneous saturation is effective in other (than evaluation of a function at a point) situations For example suppose $T:X\to Y$ and $T$ can be written as
\[T=\sum_{i=1}^{I}T_{i}\]
where sharp bounds are are known for each of the $\norm{T_{i}}_{X\to Y}$ we may always use the triangle inequality to obtain a bound for $\norm{T}_{X\to Y}$. However in most interesting problems the bound arising from the triangle inequality is not sharp. The principle of simultaneous saturation comes into play when the component pieces are not expected to achieve their sharp norm on acting on the same elements. That is in cases where if $v_{i}\in X$ is such that
\[\norm{T_{i}v_{i}}_{Y}\geq{}c\norm{v_{i}}_{X}\]
then $\norm{T_{k}v_{i}}_{Y}$ is small for all other $k\neq i$.  In such cases we would expect that the operator norm of $T$ would be much smaller than suggested by the triangle inequality because even though each component piece can saturate their bounds they require different inputs to do so.  An extreme form of this can be seen in the setting of almost orthogonality  where $X$ and $Y$ are Hilbert spaces and the $T_{i}$ are almost orthogonal operators. The Cotlar-Stein almost orthogonality theorem is proved by using a tensor power trick taking repeated compositions of $TT^{\star}$, expanding the sum and showing that strings
\[T_{i_{1}}T^{\star}_{i_{2}}\cdots T_{i_{N-1}}T^{\star}_{i_{N}}\]
where some of the $i_{k}$ are distinct decay leaving only the diagonal terms. The reason that this (and similar)  applications of the tensor power trick is that they consider geometric averages and exploit a lack of simultaneous saturation.

The paper is arranged in the following fashion. First in Section \ref{sec:simsat} we prove a general form of the principle of simultaneous saturation, Theorem \ref{thm:simsat}. In Section \ref{sec:ML} we see how this can be used to re-prove the $d$-multilinear restriction estimates of Bennett-Carbery-Tao \cite{BCT} and establish the conjectured near optimal $k$-multilinear estimates under a mixed transversality/curvature condition. 

\section{The principle of simultaneous saturation}\label{sec:simsat} 

In this section we state a very general form of the principle of simultaneous saturation. For $m=1,\dots,M$ let $T_{m}:\mathcal{H}_{m}\to X^{\C}$ where $\mathcal{H}$ is a Hilbert space and  $X$ is a non-empty set (in our motivating example $\mathcal{H}_{m}=L^{2}(H_{m})$ and $X\subset\R^{d}$). For $p\in X$ define the operator 
\[T_{m}(p)f=T_{m}f\Big|_{p}\]
which we view as a map $\mathcal{H}\to \C$ (we equip $\C$ with its usual inner product to make it a Hilbert space).  Therefore the dual map $\left(T_{m}(p)\right)^{\star}:\C\to \mathcal{H}$ and for any $p,q\in X$, $T_{m}(p)\left(T_{m}(q)\right)^{\star}:\C\to \C$ and so is given by a multiplication operator
\[T_{m}(p)\left(T_{m}(q)\right)^{\star}\alpha= \beta_{p,q}\alpha\quad\beta_{p,q}\in\C\]
we will frequently abuse notation and associate $T_{m}(p)\left(T_{m}(q)\right)^{\star}$ with the complex number $\beta_{p,q}$.

\begin{defin}\label{def:system}
Let $\lambda\geq{}1$, We say that a set of operators $\mathbb{T}_{\lambda}=\{T_{1},\cdots, T_{M}\}$ is a non-degenerate, uniformly bounded simultaneous system if the following conditions hold:
\begin{enumerate}
\item[{[1]}] Trivial estimate: There is an bound $B=B(\lambda)\in\R^{+}$ so that for any $p,q\in X$,
\begin{equation}\left|T_{m}(p)(T_{m}(q))^{\star}\right|\lesssim B^{2}\label{trivest}\end{equation} 
\item[{[2]}] Decay conditions: For each $m=1,\dots,M$ there is a symmetric, reflexive relation $\CC_{m}$ on $X$ and a decay rate $\mathfrak{R}(\lambda)$ so that either $p\CC_{m} q$ or $T_{m}(p)\left(T_{m}(q)\right)^{\star}$  decays controlled by $\mathfrak{R}(\lambda)$, that is
\[|T_{m}(p)(T_{m}(q))^{\star}|\leq \mathfrak{R}(\lambda)\]

\item[{[3]}] Non-degeneracy:  If we write $p\CC_{m} q$ diagrammatically as
\[p\xleftrightarrow[\CC_{m}]{}q\]
then for any $n\leq M$ and each $\CC_{m_{i}}$ distinct the diagram

\[\begin{tikzcd}
	& {p_{2}} && { p_{3}} \\
	{ p_{1}} &&&& { p_{4}} \\
	& {p_{n}} & \cdots & {}
	\arrow["{\mathfrak{C}_{m_{2}}}", tail reversed, from=1-2, to=1-4]
	\arrow["{\mathfrak{C}_{3}}", tail reversed, from=1-4, to=2-5]
	\arrow["{\mathfrak{C}_{m_{1}}}", tail reversed, from=2-1, to=1-2]
	\arrow["{\mathfrak{C}_{4}}", tail reversed, from=2-5, to=3-3]
	\arrow["{\mathfrak{C}_{n}}", tail reversed, from=3-2, to=2-1]
	\arrow["{\mathfrak{C}_{n-1}}", tail reversed, from=3-3, to=3-2]
\end{tikzcd}\]
can only be satisfied if $p_{1}=p_{2}=\cdots=p_{n}$.
\end{enumerate}
\end{defin}

\begin{thm}\label{thm:simsat}
Let $\lambda\geq 1$. Suppose that the set of operators, $\mathbb{T}_{\lambda}=\{T_{1},\dots,T_{M}\}$ is a non-degenerate, uniformly bounded simultaneous system and that there is a single choice of $(f_{1},\dots,f_{M})$ for which there are $N$ distinct points $p_{1},\dots, p_{N}$ so that for each $p_{j}$
\begin{enumerate}
\item There is a lower bound $L$ such that
\begin{equation}\left|\prod_{m=1}^{M}T_{m}f_{m}\Big|_{p_{j}}\right|\geq{}L^{M}\prod_{m=1}^{M}\norm{f_{m}}_{\mathcal{H}_{m}}\label{lbassump}\end{equation}
and;
\item There is a priori control on $N$ of the form $N\leq \mathfrak{R}(\lambda)^{-\frac{\epsilon}{M^{2}}}$ for some $\epsilon>0.$
\end{enumerate}
Then there exists a $C$ independent of $B,L$ and $\lambda$ so that
\begin{equation}N^{1-\epsilon}\leq C \begin{cases}
\left(\frac{B}{L}\right)^{2}& M=1\\
\left(\frac{B}{L}\right)^{\frac{2M}{M-1}}& M\geq{}2.\end{cases}\label{Nbound}\end{equation}

\end{thm}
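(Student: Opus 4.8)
The plan is to run the tensor power trick, using geometric averaging over permutations to convert the pointwise lower bounds into a lower bound on a large norm, and then to use the non-degeneracy condition [3] to show that only diagonal contributions survive in the $TT^\star$-type expansion, thereby forcing $N$ to be small. Concretely, I would first form the tensor operator $\T_m = T_m(p_{\sigma(1)})\otimes\cdots\otimes T_m(p_{\sigma(N)})$ and, for each $m$, the vector $\F_m = f_m\otimes\cdots\otimes f_m$; by \eqref{lbassump} and symmetry of the product in the indices, for every permutation $\sigma\in\SS_N$ the quantity $\prod_{m}\prod_{j} T_m f_m|_{p_{\sigma(j)}}$ has modulus at least $L^{MN}\prod_m\norm{f_m}_{\mathcal H_m}^N$. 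Summing the moduli over all $N!$ permutations $\sigma$ gives a lower bound of the form $N!\, L^{MN}\prod_m\norm{f_m}^N$ for $\sum_\sigma \bigl|\prod_m \T_m\F_m|_{P_\sigma}\bigr|$; equivalently, placing unit point masses on the $P_\sigma$ and using the geometric-mean/$\log$-convexity structure, one arranges that an $\ell^2$ (or $\ell^1$) norm of the vector indexed by $\sigma$ is bounded below by (roughly) $\sqrt{N!}\,L^{MN}\prod_m\norm{f_m}^N$.

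Next I would bound this same vector from above by expanding a $TT^\star$ composition. The key object is the Gram-type matrix $\mathcal E$ with rows and columns indexed by permutations $\sigma,\tau\in\SS_N$, whose $(\sigma,\tau)$ entry is $\prod_{m=1}^M \prod_{j=1}^N T_m(p_{\sigma(j)})\bigl(T_m(p_{\tau(j)})\bigr)^\star = \prod_m\prod_j \beta^{(m)}_{p_{\sigma(j)},p_{\tau(j)}}$. By the trivial estimate [1] each factor is $\lesssim B^2$, and by the decay condition [2], a factor for index $(m,j)$ is either $\lesssim \mathfrak R(\lambda)$ or else $p_{\sigma(j)}\,\CC_m\,p_{\tau(j)}$. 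Now the standard trick: raise $\mathcal E$ to a high power, i.e.\ consider $\Tr(\mathcal E^K)$ (or a suitable long product $\mathcal E \mathcal E^\star\cdots$), which unfolds into a sum over closed paths $\sigma_1\to\sigma_2\to\cdots\to\sigma_K\to\sigma_1$ through the permutations. Each such path either has many "small" factors — contributing at most $\mathfrak R(\lambda)$ each, which (using assumption (2), $N\le\mathfrak R(\lambda)^{-\epsilon/M^2}$) is completely absorbed by the $N!$-type combinatorial count — or else it corresponds to a chain of relations $p_{\sigma_1(j)}\,\CC_{m}\,p_{\sigma_2(j)}\,\CC_{m'}\,\cdots$ closing up into a loop. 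When the relations $\CC_{m_i}$ along such a loop are distinct, non-degeneracy [3] forces all the points in the loop to coincide, i.e.\ the permutations along the path must agree (on the relevant coordinates), collapsing the path to the diagonal. Counting: there are $(N!)^K$ paths total but only about $(N!)$ "legal/diagonal" ones, so the off-diagonal mass is suppressed by a factor $\sim (N!)^{-(K-1)}$ relative to the naive bound.

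Comparing the lower bound $\gtrsim (\sqrt{N!}\,L^{MN})^{\text{(power)}}$ with the upper bound $\lesssim$ (diagonal count)$\times (B^2)^{\text{(number of factors)}}$ and optimizing the number of tensor copies and the path length $K$ against each other yields the inequality $N^{1-\epsilon}\le C (B/L)^{2M/(M-1)}$ for $M\ge 2$, with the exponent $\tfrac{2M}{M-1}$ emerging because each of the $M$ operators contributes a product over $N$ coordinates and the loop-collapse gives a gain of one power of $N$ out of every $M$ (the $M=1$ case is degenerate since there are no nontrivial loops, and one gets only $(B/L)^2$). The bookkeeping of $\epsilon$ — making sure the combinatorial overcount of paths, the $N!$ factors, and the $\mathfrak R(\lambda)$ decay all balance so that the loss is exactly $N^\epsilon$ — is precisely what assumption (2) is designed to handle: the a priori bound $N\le\mathfrak R(\lambda)^{-\epsilon/M^2}$ guarantees that replacing a small factor by $\mathfrak R(\lambda)$ costs less than $N^{-\text{(something)}}$, so every "bad" path is genuinely negligible.

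The main obstacle I anticipate is the combinatorial/matrix step: correctly organizing the expansion of the high power of $\mathcal E$ so that the dichotomy "many small factors vs.\ a distinct-relation loop" is exhaustive and the loop-collapse from [3] applies coordinate-by-coordinate across all $N$ slots simultaneously, while keeping the powers of $B$, the factorial counts, and the $\mathfrak R(\lambda)$ decay aligned to land exactly on the exponent $\tfrac{2M}{M-1}$ rather than something weaker. A secondary subtlety is passing cleanly between the geometric average $\bigl(\prod_j |T_m f_m|_{p_j}|\bigr)^{1/N}$ and the tensor-product $\ell^2$ language so that the lower bound is not lossy; this is where the symmetrization over $\SS_N$ and the choice of point-mass measure on the orbit of $P$ must be set up carefully.
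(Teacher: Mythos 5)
Your narrative (lower bound from a distinguished eigenvector, upper bound by showing only ``diagonal'' paths survive via condition [3], with the a priori control on $N$ absorbing the $\mathfrak R(\lambda)$-losses) matches the paper's, but the specific machinery you propose does not reproduce the exponent $\tfrac{2M}{M-1}$, and I believe the approach as written fails at the combinatorial step.

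The core problem is the indexing. You build the Gram matrix $\mathcal E$ over \emph{single} permutations $\sigma,\tau\in S_N$, so each entry is $\prod_{m=1}^M\prod_{j=1}^N T_m(p_{\sigma(j)})\bigl(T_m(p_{\tau(j)})\bigr)^\star$. To avoid decay in the $j$-th slot you then need $p_{\sigma(j)}\,\CC_m\,p_{\tau(j)}$ \emph{simultaneously} for all $m=1,\dots,M$. As soon as $M\ge 2$, the $n=2$ case of non-degeneracy (applied to the two-cycle $p_{\sigma(j)}\xleftrightarrow{\CC_1}p_{\tau(j)}\xleftrightarrow{\CC_2}p_{\sigma(j)}$) already forces $p_{\sigma(j)}=p_{\tau(j)}$, hence $\sigma=\tau$. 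Your $\mathcal E$ is thus (up to rapidly decaying terms) an $N!\times N!$ matrix supported on the diagonal with entries $\lesssim B^{2MN}$. Feeding this into the eigenvector lower bound (normalizing by $N!$) gives $L^{2MN}\lesssim B^{2MN}$, which is vacuous; and raising $\mathcal E$ to a high power $K$ only multiplies exponents on both sides, so it does not help. The best one could hope to extract from the size of $N!$ is $N\lesssim (B/L)^{2M}$, which is strictly \emph{weaker} than the claimed $(B/L)^{2M/(M-1)}$ for every $M\ge 2$. The paper instead indexes by $\bsigma\in(S_N)^M$---one independent permutation per operator $T_m$---so that the tagged points $p^m_{\sigma_m(j)}$ really can ``weave'' a non-trivial $M$-cycle through the relations $\CC_1,\dots,\CC_M$. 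It is this desynchronization that produces the loop-closure (weaving) condition $\sigma_1^{-1}\sigma_M\tau_M^{-1}\tau_{M-1}\cdots\tau_2^{-1}\tau_1=e$, and the gain $(N!)^{M-1}\sim N^{N(M-1)}$ comes from the fact that only a fraction $(N!)^{-(M-1)}$ of pairs $(\bsigma,\btau)$ satisfy it.

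Moreover, the paper does \emph{not} raise the energy matrix to a high power. The multiplicative structure you are trying to manufacture with $\Tr(\mathcal E^K)$ is already present in the $M$-fold product over the operators. The correct substitute for your power trick is the pair of projections $\mathcal A_{ev},\mathcal A_{odd}$ built from the equivalence relations $\sim_{ev},\sim_{odd}$ on $(S_N)^M$ (matching the ``transitions'' $\sigma_m^{-1}\sigma_{m-1}$ at even, resp.\ odd, $m$), and the matrix $\mathcal A=\mathcal A_{ev}\mathcal A_{odd}+\mathcal A_{odd}\mathcal A_{ev}$; the quantity one bounds above and below is $\Tr(\mathcal A\mathcal W)$. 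Proving $\mathcal A\succeq 0$ requires a separate similarity argument with the cyclic shift $\mathcal C$ and is a genuine ingredient. The remaining bookkeeping (splitting off permutation pairs within Hamming distance $\epsilon N$ of the diagonal, and the odd-$M$ case via embedding into an $(M+1)$-system with the identity relation $\CC_{M+1}$) is also not addressed in your sketch, but those are secondary: the primary gap is that a single-permutation Gram matrix, raised to any power, cannot see the $N^{-(M-1)}$ improvement that the theorem asserts.
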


\begin{remark}
The a priori control on $N$ is usually very easy to establish. For the oscillatory integral based problems we will study in Section \ref{sec:ML} we have decay rates
\[\mathfrak{R}(\lambda)\leq \lambda^{-R}\text{ for any }R\in\N\]
while the growth of $N$ is naturally capped by $N^{C(d)}$ where the constant $C(d)$ depends only on the underlying dimension. The a priori control then follows from picking $R>\frac{C(d)M^{2}}{\epsilon}$. If we had the stronger condition that if $p\CC_{m}q$ failed then
\[T_{m}(p)\left(T_{m}(q)\right)^{\star}=0\]
we would not need the a priori estimate for $N$. 
\end{remark}

The remainder of this section is devoted to the proof of Theorem \ref{thm:simsat}. 

\subsection{Proof of Theorem \ref{thm:simsat}} 
Let $X\in X^{\otimes MN}$ be given by
\[X=(x_{1}^{1},x_{1}^{2},\dots,x_{1}^{M},x_{2}^{1},\dots,x_{2}^{M},\dots x_{N}^{1},\dots,x_{N}^{M}).\] We denote elements of $(\mathcal{H}_{1}\times \cdots\mathcal{H}_{M})^{N}$ by
\[\G=(g_{1}^{1},\dots,g_{M}^{1},g_{1}^{2},\dots,g_{M}^{2},\dots, g_{1}^{N},\dots,g_{M}^{N})\quad g_{m}^{j}\in \mathcal{H}_{m}.\]
Define $\T:(\mathcal{H}_{1}\times \cdots\mathcal{H}_{M})^{N}\to (X^{\otimes MN})^{\C}$ by
\[\T  \G (P) =\prod_{j=1}^{N}\prod_{m=1}^{M}T_{m}g_{m}^{j}(x_{m}^{j}).\]
From our lower bound assumption \eqref{lbassump} can easily see that there is a function $\F$ so that $\T\F$ is large at $(N!)^{M}$ points. Namely let
\[\F=(f_{1},\dots,f_{m},f_{1},\dots,f_{m},\dots,f_{1},\dots,f_{m})\]
and set
\[P_{\ee}=(p_{1}^{1},p_{1}^{2},\dots,p_{1}^{d},p_{2}^{1},\dots,  p^{d}_{N})\]
where each $p_{j}^{m}=p_{j}$ (the superscript index acts as a tagging). 
Notice that $\T\F\Big|_{P}\geq L^{NM}\prod_{m=1}^{M}\norm{f_{m}}_{\mathcal{H}_{m}}$ so long as $P$ can be obtained by permuting the entries of $P_{\ee}$. We introduce notation to exploit that symmetry. Let $S_{N}$ be the set of permutations of $N$ objects and set $D=|S_{N}|^{M}$. 
Let $\bsigma=(\sigma_{1},\sigma_{2},\dots,\sigma_{M})\in (S^{N})^{M}$ and denote
\[P_{\bsigma}=(p_{\sigma_{1}(1)}^{1},p_{\sigma_{2}(1)}^{2},\dots,p_{\sigma_{M}(1)}^{M},p_{\sigma_{1}(2)}^{1},\dots,p_{\sigma_{M}(N)}^{M}).\]
Then
\[\T\F\Big|_{P_{\ee}}=\T\F\Big|_{P_{\bsigma}}\geq L^{MN}\prod_{m=1}^{M}\norm{f_{m}}_{\mathcal{H}}\quad \bsigma\in(S_{N})^{M}.\]
Accordingly we want to study the evaluation of $\T\F$ on the set of points $\P=\{P_{\bsigma}\}_{\bsigma\in (S_{N})^{M}}$. Let $\bdelta_{\P}$ be the measure given by
\begin{equation}
\bdelta_{\P}=\frac{1}{D}\sum_{\bsigma\in (S_{N})^{M}}\bdelta_{P_{\bsigma}}\label{measuredef}\end{equation}
where $\bdelta_{P_{\bsigma}}$ is the usual point measure supported at $P_{\bsigma}$. Then we define $\T_{\P}\F=\T\F\Big|_{\P}$ and equip $\P$ with the $\bdelta_{\P}$ measure to create a Hilbert space, $L^{2}(\bdelta_{\P})$. Then note that if we normalise so that $\norm{\F}=1$,
\[L^{MN}\leq e^{i\theta}\langle \1_{\P},\T_{\P}\F\rangle\]
for some $\theta\in\R$. 
and so
\begin{equation}L^{2MN}\leq \langle \T_{\P}^{\star}\1_{\P},\T_{\P}^{\star}\1_{\P}\rangle =\langle \1_{\P},\T_{\P}\T_{\P}^{\star}\1_{\P}\rangle\label{lowerbnd}\end{equation}
We denote the kernel of $\T_{\P}\T_{\P}^{\star}$ by $\U(P,Q)$,
\begin{equation}\U(P,Q)=\prod_{m=1}^{M}\prod_{j=1}^{N}T_{m}(p_{j}^{m})\left(T_{m}(q_{j}^{m})\right)^{\star}\label{Uldef}\end{equation}

We now record a useful symmetry lemma. 

\begin{lem}\label{lem:inv}
Let 
\begin{equation}
Z=\T_{\P}\T_{\P}^{\star}\1_{\P}\label{Ualphadef}
\end{equation}
then $Z(P_{\bsigma})=Z(P_{\ee})$. That is $Z$ is constant on the support $\bdelta_{\P}$. Therefore $\1_{\P}$ is an eigenfunction of $\T_{\P}\T_{\P}^{\star}$.
\end{lem}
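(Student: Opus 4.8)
\subsection*{Plan of proof of Lemma \ref{lem:inv}}

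The statement is essentially a symmetry fact, and the plan is to exploit that $\P=\{P_{\bsigma}\}_{\bsigma\in(S_N)^M}$ is an orbit of the group $(S_N)^M$ together with the observation that the kernel $\U$ respects this action. First I would unwind the definitions: by \eqref{Uldef} and \eqref{measuredef}, for any $\bsigma\in(S_N)^M$,
\[
Z(P_{\bsigma})=\left(\T_{\P}\T_{\P}^{\star}\1_{\P}\right)(P_{\bsigma})=\frac{1}{D}\sum_{\btau\in(S_N)^M}\U(P_{\bsigma},P_{\btau})=\frac{1}{D}\sum_{\btau\in(S_N)^M}\prod_{m=1}^{M}\prod_{j=1}^{N}T_{m}\big(p^{m}_{\sigma_{m}(j)}\big)\big(T_{m}\big(p^{m}_{\tau_{m}(j)}\big)\big)^{\star}.
\]
Since the superscript $m$ is only a tag and $p^{m}_{a}=p_{a}$ for every $m$, each factor is a complex number $\beta^{(m)}_{\sigma_{m}(j),\tau_{m}(j)}$ depending only on $m$ and the two point--labels, where $\beta^{(m)}_{a,b}:=T_{m}(p_{a})(T_{m}(p_{b}))^{\star}$.

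The key step is a reindexing of the product over $j$. For each fixed $m$ the map $j\mapsto\sigma_{m}(j)$ is a bijection of $\{1,\dots,N\}$, so substituting $i=\sigma_{m}(j)$ gives $\prod_{j}\beta^{(m)}_{\sigma_{m}(j),\tau_{m}(j)}=\prod_{i}\beta^{(m)}_{i,(\tau_{m}\sigma_{m}^{-1})(i)}$. Taking the product over $m$ this reads precisely
\[
\U(P_{\bsigma},P_{\btau})=\U\big(P_{\ee},P_{\btau\bsigma^{-1}}\big),\qquad \btau\bsigma^{-1}:=(\tau_{1}\sigma_{1}^{-1},\dots,\tau_{M}\sigma_{M}^{-1})\in(S_N)^M,
\]
where $P_{\ee}$ is the configuration attached to the identity tuple. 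I would then substitute this into the sum defining $Z(P_{\bsigma})$ and note that right translation $\btau\mapsto\btau\bsigma^{-1}$ is a bijection of $(S_N)^M$, so the sum over $\btau$ is unchanged:
\[
Z(P_{\bsigma})=\frac{1}{D}\sum_{\btau\in(S_N)^M}\U\big(P_{\ee},P_{\btau\bsigma^{-1}}\big)=\frac{1}{D}\sum_{\bmu\in(S_N)^M}\U\big(P_{\ee},P_{\bmu}\big)=Z(P_{\ee}).
\]
Hence $Z$ is constant on the support of $\bdelta_{\P}$, i.e. $Z=Z(P_{\ee})\,\1_{\P}$, which is exactly the assertion that $\1_{\P}$ is an eigenfunction of $\T_{\P}\T_{\P}^{\star}$ (with eigenvalue $Z(P_{\ee})$); this is what will justify using $\1_{\P}$ as the relevant test function in \eqref{lowerbnd}.

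There is no analytic obstacle here: the only care required is bookkeeping, namely checking that the superscript tags do not affect the scalars $\beta^{(m)}_{a,b}$ and getting the order of composition right in the substitution $i=\sigma_{m}(j)$ (so that $\btau\bsigma^{-1}$, rather than $\bsigma^{-1}\btau$, appears -- though since we use only that translation is a bijection of $(S_N)^M$, either convention suffices). The content of the lemma is simply that averaging $\U$ against the uniform measure on the group orbit $\P$ produces a function invariant under the group.
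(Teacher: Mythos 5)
Your proof is correct and takes essentially the same route as the paper: unwind $Z(P_{\bsigma})$ as the average of $\U(P_{\bsigma},P_{\btau})$ over $\btau$, reindex the $j$-product by $i=\sigma_{m}(j)$ to convert $\U(P_{\bsigma},P_{\btau})$ into $\U(P_{\ee},P_{\btau\bsigma^{-1}})$, and then use that right translation is a bijection of $(S_N)^M$ (equivalently, invariance of $\bdelta_{\P}$) to remove the $\bsigma$-dependence. Your explicit tracking of the scalars $\beta^{(m)}_{a,b}$ and the group-law direction is slightly more careful bookkeeping than the paper's, but the argument is identical in substance.
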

\begin{proof}
Note that 
\begin{align*}
Z(P_{\bsigma})&=\T_{\P}\T_{P}^{\star}\1_{\P}\Big|_{P_{\bsigma}}\\
&=\int \U(P_{\bsigma},Q)d\bdelta_{\P}(Q)\end{align*}
Since
\[\U(P_{\bsigma},Q)=\prod_{m=1}^{M}\prod_{j=1}^{N}T_{m}(p^{m}_{\simga_{m}(j)})(T_{m}(q_{j}^{m}))^{\star}\]
we can reorder the product, in $j$, to write
\begin{align*}
Z(P_{\bsigma})&=\int \left(\prod_{m=1}^{M}\prod_{j=1}^{N}T_{m}(p^{m}_{j})\left(T_{m}(q_{\sigma^{-1}_{m}(j)}^{m})\right)^{\star}\right)d\bdelta_{\P}(Q)\\
&=\int \U(P_{\ee},Q)d\bdelta_{\P}(Q)\\
&=Z(P_{\ee}).\end{align*}
Where to progress from the first to the second line we performed a change of variables $q_{\sigma^{-1}_{m}(j)}\to q_{j}^{m}$ under which the measure $\bdelta_{\P}$ is invariant.

\end{proof}

We know that
\[L^{2MN}\leq  \langle \1_{\P},\T\T^{\star}\1_{\P}\rangle,\]
so we can conclude that if $\Lambda$ is the eigenvalue associated with $\1_{\P}$ then
\begin{equation}\Lambda\geq L^{2MN}.\label{alphalb}\end{equation}

We want to produce loop diagrams involving the relations $\CC_{1},\dots,\CC_{M}$. Since we are working now on finite dimensional vector spaces we reduce the problem to one about matrices. Assume we have some ordering on the elements of $(S_{N})^{d}$. For convenience we refer to the $\btau$ element of a vector similarly we talk about the $(\bsigma,\btau)$ element of the matrix.  Let
\[\mathcal{V}_{\bsigma}(\btau)=\begin{cases}
1& \btau=\bsigma\\
0 & \btau\neq \bsigma.\end{cases}\]
Then $\{V_{\bsigma}\mid\bsigma\in (S_{N})^{M}\}$ gives us a convenient basis in which to express the operator $\T_{\P}\T_{\P}^{\star}$. 

We write
\begin{equation}\T_{\P}\T_{\P}^{\star}= \mathcal{W}_{F}\label{matrixdef}\end{equation}
where $\mathcal{W}$ is the $(S_{N})^{M}\times (S_{N})^{M}$ matrix associated with $\T_{\P}\T_{\P}^{\star}$.  Since it represents an operator in $T_{\P}T_{\P}^{\star}$ format each $\mathcal{W}$ is Hermitian positive semi-definite. In this context the matrix $\mathcal{W}$ becomes our energy matrix, it records all possible interactions between points $p_{j}$, $p_{j}$.   With this notation then
\[\mathcal{W}(\bsigma,\btau)=\frac{1}{D}\U(P_{\bsigma},P_{\btau}).\]

To fix our ideas, let's first consider the $(\bsigma,\btau)$ element of the matrix $\mathcal{W}$, denoted by $\mathcal{W}(\bsigma,\btau)=\frac{1}{D}\U(P_{\bsigma},P_{\btau})$. We will use our conditions on $\U(X,Y)$ to show that if we avoid rapid decay we must produce a path of points connected by the relations $\CC_{1},\dots,\CC_{M}$. We refer to this process as producing a weaving, see Figure \ref{fig:ploop}, through $\U(P_{\bsigma},P_{\btau})$. 
Recall that
\[\U(P,Q)=\prod_{m=1}^{M}\prod_{j=1}^{N}T_{m}(p_{j}^{m})(T_{m}(q_{j}^{m}))^{\star}\]
where for each pair $(m,j)$ one of the following occurs
\begin{enumerate}
\item $p_{j}^{m}\CC_{m}q_{j}^{m}$; or
\item $|T_{m}(p_{j}^{m})(T_{m}(q_{j}^{m}))^{\star}|\leq \mathfrak{R}(\lambda)$.
\end{enumerate}
We write the first option diagrammatically as 
\[p_{j}^{m}\xleftrightarrow[\CC_{m}]{}p_{j'}^{m}\]
this is the only option to avoid decay. 
So consider $\U(P_{\bsigma},P_{\btau})$ for some fixed $(\bsigma,\btau)$.  If we wish to avoid decay we must have
\[p_{\sigma_{m}(j)}^{m}\xleftrightarrow[\CC_{m}] {}p_{\tau_{m}(j)}^{m}\]
for all pairs $(m,j)$. Starting with $m=1$ this requires
\[p_{\sigma_{1}(j)}^{1}\xleftrightarrow[\CC_{1}] {}p_{\tau_{1}(j)}^{1}.\]
Now
\[p^{1}_{\tau_{1}(j)}=p^{2}_{\tau_{1}(j)}=p^{2}_{\tau_{2}\tau_{2}^{-1}\tau_{1}(j)}\]
so we can now add in information about the relation $\CC_{2}$. We refer to the permutation $\tau_{2}^{-1}\tau_{1}$ as the $1\to 2$ transition. To avoid rapid decay from any of the $m=2$ terms in the product defining $\U(P_{\bsigma},P_{\btau})$ we require
\[p^{1}_{\tau_{1}(j)}=p^{2}_{\tau_{2}\tau_{2}^{-1}\tau_{1}(j)}\xleftrightarrow[\CC_{2}]{}p_{\sigma_{2}\tau_{2}^{-1}\tau_{1}(j)}.\]
Putting these together we form a chain
\[p_{\sigma_{1}(j)}^{1}\xleftrightarrow[\CC_{1}] {}p_{\tau_{1}(j)}^{1}\xleftrightarrow[\CC_{2}]{}p_{\sigma_{1}\tau_{2}^{-1}\tau_{1}(j)}^{2}.\]
Since 
\[p_{\sigma_{2}\tau_{2}^{-1}\tau_{1}(j)}^{2}=p^{3}_{\sigma_{3}\sigma_{3}^{-1}\sigma_{2}\tau_{2}^{-1}\tau_{1}(j)}\]
we can repeat the process to add another link
\[p_{\sigma_{1}(j)}^{1}\xleftrightarrow[\CC_{1}] {}p_{\tau_{1}(j)}^{1}\xleftrightarrow[\CC_{2}]{}p_{\sigma_{1}\tau_{2}^{-1}\tau_{1}(j)}^{2}\xleftrightarrow[\CC_{3}]{}p^{3}_{\tau_{3}\sigma_{3}^{-1}\sigma_{2}\tau_{2}^{-1}\tau_{1}(j)}.\]
We continue in this manner until we have used all relations $\CC_{1},\dots,\CC_{M}$. For the resultant path to be a closed loop we need to eventually return to $p_{\sigma_{1}(j)}$. For the moment we will focus on the case $M$ even. For even $M$ the loop condition is
\begin{align}\sigma_{M}\tau_{M}^{-1}\tau_{M-1}\cdots\tau_{2}^{-1}\tau_{1}&=\sigma_{1}\nonumber\\
\sigma_{1}^{-1}\sigma_{M}\tau_{M}^{-1}\tau_{M-1}\cdots\tau_{2}^{-1}\tau_{1}&=e.\label{loopcond}\end{align}
Later we will treat the case when $M$ is odd by embedding into a $M+1$ system.

Figure \ref{fig:ploop} depicts this weaving process (for even $M$) for $j=1$ for a $(\bsigma,\btau)$ that obeys \eqref{loopcond}. 

\begin{figure}[h!]\label{fig:ploop}
\begin{center}
\includegraphics[scale=0.8]{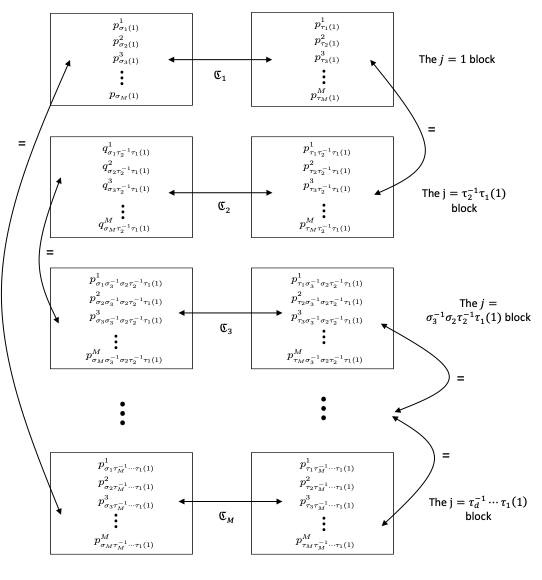}
\end{center}
\caption{If we wish to avoid rapid decay we must connect up all points in a path. If $(\bsigma,\btau)$ obeys \eqref{loopcond} then the path is a loop}

\end{figure}

There are two important facts that come out of the weaving process which we will record here
\begin{enumerate}
\item The condition \eqref{loopcond} depends on the transitions $\sigma_{1}^{-1}\sigma_{M}$, $\tau_{M}^{-1}\tau_{M-1}$, etc rather than on individual $\sigma_{m}$ and $\tau_{m}$. So we should average over all $P_{\bsigma}$, $P_{\btau}$ which have the same transitions. 
\item For every $j=1,\dots,N$ we produce a loop, each loop involved $d$ points. We could start the loop at any other $m$ (rather than $m=1$) the loop condition remains the same. We could also start from $\tau_{1}(j)$ rather than $\sigma_{1}(j)$. Doing so switches the roles of $\bsigma$ and $\btau$ in the loop condition (and so is associated with taking the Hermitian conjugate of the related matrix).  

\end{enumerate}

\subsection*{Case 1: $M$ is even}
We construct matrix $\mathcal{M}$ with whose trace is given by a sum  of loops. By obtaining a lower bound on the trace (from our known large eigenvalue) we can then obtain control on the size of $N$.

First we will define matrices that average over elements with the same transitions. Here and henceforth we will use mod $M$ clock arithmetic when describing the $\sigma_{m}$ so that $\sigma_{M}=\sigma_{0}$.

\begin{defin}
For $(\bsigma,\btau)\in (S_{N})^{M}\times (S_{N})^{M}$ we say that
\[\bsigma\sim_{ev}\btau\]
if for all even $m\leq M$
\begin{equation}\sigma_{m}^{-1}\sigma_{m-1}=\tau^{-1}_{m}\tau_{m-1}. \label{eqrefevdefn}\end{equation}
We say that
\[\bsigma\sim_{odd}\btau\]
if for all odd $m\leq M$
\begin{equation}\sigma_{m}^{-1}\sigma_{m-1}=\tau_{m}^{-1}\tau_{m-1}.\label{eqrefodddef}\end{equation}
\end{defin}

Note that  $\sim_{ev}$ and $\sim_{odd}$ are equivalence relations.  Let $|[\bsigma]_{ev}|$ be the size of the equivalence class of $\bsigma$ under $\sim_{ev}$. If $\btau\sim_{ev}\bsigma$ then the $\tau_{m}$ for $m$ even can be chosen independently with the $\tau_{m}$, $m$ odd determined by \eqref{eqrefevdefn}. Therefore, $|[\bsigma]_{ev}|=(S_{N})^{M/2}=D^{1/2}$. A similar argument shows that the $\sim_{odd}$ equivalence classes have the same size.

We now define matrices associated with the equivalence relations.
\begin{equation}
\mathcal{A}_{ev}(\bsigma,\btau)=\frac{1}{D^{1/2}}
\begin{cases}
1 &\bsigma\sim_{ev}\btau\\
0 &\text{otherwise,}\end{cases}\label{avevdef}
\end{equation}
\begin{equation}
\mathcal{A}_{odd}(\bsigma,\btau)=\frac{1}{D^{1/2}}
\begin{cases}
1 & \bsigma \sim_{odd}\btau\\
0 & \text{otherwise.}\end{cases}\label{avodddef}\end{equation}

Clearly both $\mathcal{A}_{ev}$ and $\mathcal{A}_{odd}$ are an Hermitian projections.  Note that $\mathcal{X}_{\1_{\P}}$ is an eigenvector (with eigenvalue $1$) of both $\mathcal{A}_{ev}$ and $\mathcal{A}_{odd}.$

We will study the matrix
\begin{equation}
\mathcal{M}=\left(\mathcal{A}_{ev}\mathcal{A}_{odd}+\mathcal{A}_{odd}\mathcal{A}_{ev}\right)\mathcal{W}\label{Mdef}\end{equation}
and will see that its trace consists of a sum over loops. Both $\mathcal{W}$ and $\left(\mathcal{A}_{ev}\mathcal{A}_{odd}+\mathcal{A}_{odd}\mathcal{A}_{ev}\right)$ are clearly Hermitian. The matrix $\mathcal{W}$ is positive semi-definite (as it is $TT^{\star}$ of another operator) and, as we will now show, so is $\left(\mathcal{A}_{ev}\mathcal{A}_{odd}+\mathcal{A}_{odd}\mathcal{A}_{ev}\right)$. Therefore their product, $\mathcal{M}$ has only positive eigenvalues and we can lower bound the trace by the single eigenvalue that we know.

For convenience denote
\begin{equation}\mathcal{A}=\left(\mathcal{A}_{ev}\mathcal{A}_{odd}+\mathcal{A}_{odd}\mathcal{A}_{ev}\right).\label{Amatrixdef}\end{equation}
 To analyse the eigenvalues of $\mathcal{A}$ we need to introduce an auxiliary operator. 
 
 Let
 \begin{equation}
 \mathcal{C}(\bsigma,\btau)=\begin{cases}
 1 & \sigma_{m+1}=\tau_{m}\text{ for all }1\leq m\leq M\\
 0 & \text{otherwise.}\end{cases}\label{cycledef}\end{equation}
 Note that $\mathcal{C}$ permutes the entries of a vector and so is unitary. In fact $\mathcal{C}^{M}=\Id$ so the eigenvalues of $\mathcal{C}$ are the $M$-th roots of unity. Associated with the matrix $\mathcal{C}$ we define the map $\mathfrak{c}:(S_{N})^{M}\to(S_{N})^{M}$ by
\[\mathfrak{c}(\sigma_{1},\sigma_{2},\dots,\sigma_{M})=(\sigma_{2},\sigma_{3},\dots,\sigma_{M},\sigma_{1}).\]
 Note that
\[\mathcal{C}\mathcal{V}_{\bsigma}=\mathcal{V}_{\mathfrak{c}^{-1}(\sigma)}\]
 
 \begin{lem}\label{lem:similarities}
 The matrices $\mathcal{A}_{ev}$ and $\mathcal{A}_{odd}$ are similar with similarity transform $\mathcal{C}$. The matrix $\mathcal{C}^{2}$ commutes with both $\mathcal{A}_{ev}$ and $\mathcal{A}_{odd}$. 
 \end{lem}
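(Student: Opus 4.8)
The plan is to compute everything in the basis $\{\mathcal{V}_{\bsigma}\}_{\bsigma\in(S_{N})^{M}}$ and reduce both assertions to a single combinatorial fact: the cyclic shift $\mathfrak{c}$ interchanges the defining conditions of $\sim_{ev}$ and $\sim_{odd}$. First I would record the action of $\mathcal{C}$ and of the two averaging matrices on basis vectors. From \eqref{avevdef} one has $\mathcal{A}_{ev}\mathcal{V}_{\btau}=D^{-1/2}\sum_{\bsigma\sim_{ev}\btau}\mathcal{V}_{\bsigma}$, and similarly for $\mathcal{A}_{odd}$; combining this with $\mathcal{C}\mathcal{V}_{\bsigma}=\mathcal{V}_{\mathfrak{c}^{-1}(\bsigma)}$ (so $\mathcal{C}^{-1}\mathcal{V}_{\bsigma}=\mathcal{V}_{\mathfrak{c}(\bsigma)}$) and reindexing $\bsigma\mapsto\mathfrak{c}^{-1}(\bsigma)$ gives
\[\mathcal{C}^{-1}\mathcal{A}_{ev}\mathcal{C}\,\mathcal{V}_{\btau}=\frac{1}{D^{1/2}}\sum_{\mathfrak{c}^{-1}(\bsigma)\sim_{ev}\mathfrak{c}^{-1}(\btau)}\mathcal{V}_{\bsigma}.\]
Hence it suffices to prove that $\mathfrak{c}^{-1}(\bsigma)\sim_{ev}\mathfrak{c}^{-1}(\btau)$ holds exactly when $\bsigma\sim_{odd}\btau$, together with the symmetric statement with the roles of $ev$ and $odd$ exchanged.

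Second, I would verify that combinatorial claim by a direct index computation. Writing $\widehat{\bsigma}=\mathfrak{c}^{-1}(\bsigma)$ we have $\widehat{\sigma}_{m}=\sigma_{m-1}$ with indices read mod $M$. The relation $\widehat{\bsigma}\sim_{ev}\widehat{\btau}$ asks that $\widehat{\sigma}_{m}^{-1}\widehat{\sigma}_{m-1}=\widehat{\tau}_{m}^{-1}\widehat{\tau}_{m-1}$ for every even $m$, i.e. $\sigma_{m-1}^{-1}\sigma_{m-2}=\tau_{m-1}^{-1}\tau_{m-2}$; substituting $k=m-1$, the index $k$ runs over $\{m-1:\ m\ \text{even}\}=\{1,3,\dots,M-1\}$, which is precisely the set of odd residues mod $M$ because $M$ is even. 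So the condition is exactly $\bsigma\sim_{odd}\btau$, which proves $\mathcal{C}^{-1}\mathcal{A}_{ev}\mathcal{C}=\mathcal{A}_{odd}$. Running the same computation starting from $\widehat{\bsigma}\sim_{odd}\widehat{\btau}$, the index set $\{m-1:\ m\ \text{odd}\}$ is carried onto the even residues mod $M$ (again using $M$ even, with $0\equiv M$), giving $\mathcal{C}^{-1}\mathcal{A}_{odd}\mathcal{C}=\mathcal{A}_{ev}$. Together these identities say that $\mathcal{A}_{ev}$ and $\mathcal{A}_{odd}$ are similar, the conjugating matrix being $\mathcal{C}$ (equivalently $\mathcal{C}^{-1}=\mathcal{C}^{M-1}$).

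Third, the commutation statement is then purely formal: composing the two conjugation identities gives $\mathcal{C}^{-2}\mathcal{A}_{ev}\mathcal{C}^{2}=\mathcal{C}^{-1}\mathcal{A}_{odd}\mathcal{C}=\mathcal{A}_{ev}$, and likewise $\mathcal{C}^{-2}\mathcal{A}_{odd}\mathcal{C}^{2}=\mathcal{A}_{odd}$, i.e. $\mathcal{C}^{2}$ commutes with both $\mathcal{A}_{ev}$ and $\mathcal{A}_{odd}$. (Alternatively one can note $\mathcal{C}^{2}\mathcal{V}_{\bsigma}=\mathcal{V}_{\mathfrak{c}^{-2}(\bsigma)}$ and that $\mathfrak{c}^{-2}$ shifts indices by two and hence preserves parity, which makes the commutation immediate; but the chain of conjugations is the cleanest route.) I expect the only delicate point — the "main obstacle," such as it is — to be the clock-arithmetic bookkeeping, namely checking that $m\mapsto m-1\pmod M$ carries the even residue class bijectively onto the odd residue class and back; this is exactly where the hypothesis that $M$ is even enters, and it is the one place an off-by-one slip would break the argument. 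Everything else is a formal manipulation of the basis $\{\mathcal{V}_{\bsigma}\}$.
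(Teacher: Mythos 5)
Your proof is correct, and the core combinatorial content — that the cyclic shift $\mathfrak{c}$ carries the even-indexed transition conditions onto the odd-indexed ones (and vice versa), with the hypothesis that $M$ is even making the parity classes swap cleanly mod $M$ — is exactly what the paper's proof uses. The one place you diverge is in handling the commutation statement: you prove both $\mathcal{C}^{-1}\mathcal{A}_{ev}\mathcal{C}=\mathcal{A}_{odd}$ and $\mathcal{C}^{-1}\mathcal{A}_{odd}\mathcal{C}=\mathcal{A}_{ev}$, and then the commutation with $\mathcal{C}^{2}$ falls out formally by chaining the two identities, whereas the paper proves a single conjugation identity ($\mathcal{C}\mathcal{A}_{ev}\mathcal{C}^{-1}=\mathcal{A}_{odd}$) and then verifies $\mathcal{C}^{2}\mathcal{A}_{ev}=\mathcal{A}_{ev}\mathcal{C}^{2}$ by a second, independent index computation. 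Your route is marginally tidier — the second identity is the same computation as the first with the parities exchanged, and the commutation is then free — but both arguments rest on the same observation and are of comparable length. (Note also that you conjugate in the opposite direction from the paper: $\mathcal{C}^{-1}(\cdot)\mathcal{C}$ rather than $\mathcal{C}(\cdot)\mathcal{C}^{-1}$; since both hold, this is harmless, and indeed their equality is equivalent to the commutation claim itself.)
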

 
 \begin{proof}
 The matrix
\[\mathcal{C}\mathcal{A}_{ev}\mathcal{C}^{-1}(\bsigma,\btau)=\frac{1}{D^{1/2}}\begin{cases}
 1 & \mathfrak{c}(\bsigma)\sim_{ev}\mathfrak{c}(\btau)\\
 0 & \text{otherwise}\end{cases}\]
 The requirement $\mathfrak{c}(\bsigma)\sim_{ev}\mathfrak{c}(\btau)$ enforces
\begin{align*}
\sigma_{3}^{-1}\sigma_{2}&=\tau_{3}^{-1}\tau_{2}\\
\sigma_{5}^{-1}\sigma_{4}&=\tau_{5}^{-1}\tau_{4}\\
\vdots &\\
\sigma_{M-1}^{-1}\sigma_{M-2}&=\tau_{M-1}^{-1}\tau_{M-2}\\
\sigma_{1}^{-1}\sigma_{M}&=\tau_{1}^{-1}\tau_{M}\end{align*}
or $\bsigma\sim_{odd}\btau$. So indeed $\mathcal{A}_{odd}=\mathcal{C}\mathcal{A}_{ev}\mathcal{C}^{-1}$. Now
\[\mathcal{C}^{2}\mathcal{A}_{ev}(\bsigma,\btau)=\frac{1}{D^{1/2}}\begin{cases}
1 & \mathfrak{c}^{2}(\bsigma)\sim_{ev}\btau\\
0 & \text{otherwise}\end{cases}\]
 The requirement $\mathfrak{c}^{2}(\bsigma)\sim_{ev}\btau$ enforces
\begin{align*}
\sigma_{4}^{-1}\sigma_{3}&=\tau_{2}^{-1}\tau_{1}\\
\sigma_{6}^{-1}\sigma_{5}&=\tau_{4}^{-1}\tau_{3}\\
\vdots &\\
\sigma_{M}^{-1}\sigma_{M-1}&=\tau_{M-2}^{-1}\tau_{M-3}\\
\sigma_{2}^{-1}\sigma_{1}&=\tau_{M}^{-1}\tau_{M-1}\end{align*}
which is the same as requiring $\bsigma\sim_{ev} \mathfrak{c}^{-2}(\btau)$. A similar argument shows that $\mathcal{C}^{2}$ commutes with $\mathcal{A}_{odd}$. The key point driving the commutation identities is that the set of even (or odd) transitions are mapped to even (or odd) transitions by $\mathcal{C}^{2}$.
 \end{proof}
 
 \begin{lem}\label{lem:posdef}
 The matrix $\mathcal{A}$ given by \eqref{Amatrixdef} is positive semi-definite. 
 \end{lem}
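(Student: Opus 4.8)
The plan is to lean on the two structural facts in Lemma~\ref{lem:similarities} — that $\mathcal{A}_{odd}=\mathcal{C}\mathcal{A}_{ev}\mathcal{C}^{-1}$ and that $\mathcal{C}^{2}$ commutes with both $\mathcal{A}_{ev}$ and $\mathcal{A}_{odd}$ — to split the ambient space into the eigenspaces of $\mathcal{C}^{2}$ and to verify positivity of $\mathcal{A}$ one block at a time. On a single block the pair $(\mathcal{A}_{ev},\mathcal{A}_{odd})$ collapses to one projection together with its conjugate by a reflection, which reduces the lemma to a concrete positivity question about a projection and a reflection.

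In detail: $\mathcal{A}$ is Hermitian, and since $\mathcal{C}^{2}$ commutes with $\mathcal{A}_{ev}$ and $\mathcal{A}_{odd}$ it commutes with $\mathcal{A}$; the eigenvalues of $\mathcal{C}^{2}$ being $(M/2)$-th roots of unity, write $\C^{(S_{N})^{M}}=\bigoplus_{\zeta}\mathcal{K}_{\zeta}$ for its eigenspace decomposition. Each $\mathcal{K}_{\zeta}$ is invariant under $\mathcal{C}$ (trivially), under $\mathcal{A}_{ev}$ and $\mathcal{A}_{odd}$ (by Lemma~\ref{lem:similarities}), hence under $\mathcal{A}$, so it suffices to show $\mathcal{A}\big|_{\mathcal{K}_{\zeta}}\ge 0$ for each $\zeta$. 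Fix $\zeta$ and pick $j$ with $\omega^{2j}=\zeta$, $\omega=e^{2\pi i/M}$. Because $M$ is even, the eigenvalues of $\mathcal{C}$ on $\mathcal{K}_{\zeta}$ are exactly $\omega^{j}$ and $\omega^{j+M/2}=-\omega^{j}$, so $\mathcal{C}\big|_{\mathcal{K}_{\zeta}}=\omega^{j}R$ where $R$ is the difference of the two $\mathcal{C}$-eigenprojections inside $\mathcal{K}_{\zeta}$; thus $R^{2}=\Id$ and $R^{\star}=R$. Put $P:=\mathcal{A}_{ev}\big|_{\mathcal{K}_{\zeta}}$, still an orthogonal projection. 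Then Lemma~\ref{lem:similarities} gives $\mathcal{A}_{odd}\big|_{\mathcal{K}_{\zeta}}=\mathcal{C}P\mathcal{C}^{-1}\big|_{\mathcal{K}_{\zeta}}=RPR$, so
\[
\mathcal{A}\big|_{\mathcal{K}_{\zeta}}=PRPR+RPRP=\big(P+RPR\big)^{2}-\big(P+RPR\big),
\]
the last equality using $P^{2}=P$ and $(RPR)^{2}=RPR$. Hence positivity of $\mathcal{A}\big|_{\mathcal{K}_{\zeta}}$ is equivalent to the self-adjoint operator $P+RPR$ having no spectrum in the open interval $(0,1)$ — equivalently, to the two projections $P$ and $RPR$ commuting, in which case $\mathcal{A}\big|_{\mathcal{K}_{\zeta}}=2\,PRPR\ge 0$ at once.

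The heart of the matter — and the step I expect to be the real obstacle — is precisely this commutation/spectral-gap statement; it is genuine content and not a formal fact, since a projection $P$ and a reflection $R$ in general position produce an angle strictly between $0$ and $\pi/2$ between the ranges of $P$ and $RPR$, which would place spectrum of $P+RPR$ inside $(0,1)$ and destroy positivity. To settle it one must use the actual definitions: $\mathcal{A}_{ev}$ and $\mathcal{A}_{odd}$ are the averaging projections onto functions constant on the $\sim_{ev}$- and $\sim_{odd}$-classes, and $\mathcal{C}$ is the cyclic relabelling $\mathfrak{c}$. My approach would be to pass to the transition coordinates $\sigma_{m}^{-1}\sigma_{m-1}$ — in which $\sim_{ev}$ pins down the transitions for even $m$ and $\sim_{odd}$ those for odd $m$, these $M$ transitions being constrained only by the single relation that their ordered product is the identity — and to show that, within each block of the common coarsening of the two partitions, every $\sim_{ev}$-class meets every $\sim_{odd}$-class in exactly one point (a ``grid'' configuration). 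This is exactly what forces $\mathcal{A}_{ev}$ and $\mathcal{A}_{odd}$ to commute, and hence closes the argument.

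Finally one assembles the blocks: $\mathcal{A}$ is block-diagonal with respect to $\C^{(S_{N})^{M}}=\bigoplus_{\zeta}\mathcal{K}_{\zeta}$ and positive on each summand, hence positive semi-definite, which is the assertion of Lemma~\ref{lem:posdef}. Only the even-$M$ case is needed here; the odd case is handled through the $(M+1)$-embedding described above.
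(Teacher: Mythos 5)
Your block decomposition via the $\mathcal{C}^2$-eigenspaces is correct, and the identity $\mathcal{A}\big|_{\mathcal{K}_\zeta}=(P+RPR)^{2}-(P+RPR)$ is a genuinely sharp reformulation: you are right that positive semi-definiteness of $\mathcal{A}$ is \emph{equivalent} to $\mathcal{A}_{ev}$ and $\mathcal{A}_{odd}$ commuting. But the commutation claim that you defer to the ``grid configuration'' argument is false, so the proposed route cannot close. By Lemma \ref{lem:Aweave}, commutation is equivalent to the symmetry of the weaving relation $\weave$, and already for $M=4$, $N=3$ it fails. Take $\bsigma=(e,e,e,e)\in(S_3)^{4}$, whose transitions are all trivial, and $\btau=(e,(23),(132),(13))\in(S_3)^{4}$, whose transitions are $t_1=(13)$, $t_2=(23)$, $t_3=(12)$, $t_4=(23)$ (one checks $t_4t_3t_2t_1=e$). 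Then $\bsigma\weave\btau$ reduces to $t_4t_2=(23)(23)=e$, which holds, while $\btau\weave\bsigma$ reduces to $t_1t_3=(13)(12)\neq e$, which fails. Hence $\mathcal{A}_{odd}\mathcal{A}_{ev}(\bsigma,\btau)=D^{-1+1/M}\neq 0=\mathcal{A}_{ev}\mathcal{A}_{odd}(\bsigma,\btau)$, the two projections do not commute, and the grid picture you are hoping for does not exist. So your proposal as written is not a proof, and the missing step is not merely unproved but actually incorrect.

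The paper's proof is structurally different: it does not pass to the $\mathcal{C}^{2}$-blocks and never invokes commutation. Instead it uses $(\mathcal{A}_{ev}+\mathcal{A}_{odd})^{2}=\mathcal{A}_{ev}+\mathcal{A}_{odd}+\mathcal{A}$ and argues that every nonzero eigenvalue $\kappa$ of $\mathcal{A}_{ev}+\mathcal{A}_{odd}$ satisfies $\kappa\geq 1$, via the decomposition $v=w_1+w_2$ with $w_1\in\mathfrak{I}_{ev}$, $w_2=e^{-i\theta}\mathcal{C}w_1\in\mathfrak{I}_{odd}$. Since your reduction shows the lemma is equivalent to commutation, the counterexample above bears on that argument as well; in particular you should scrutinise the inequality $|\langle\hat{w}_1,\hat{w}_2\rangle|\leq\langle\hat{w}_2,\mathcal{A}_{ev}\hat{w}_2\rangle$ used in the paper, since $\langle\hat{w}_2,\mathcal{A}_{ev}\hat{w}_2\rangle=\|\mathcal{A}_{ev}\hat{w}_2\|^{2}$ is the \emph{squared} projection length, while Cauchy--Schwarz (with $\hat{w}_1\in\mathfrak{I}_{ev}$) only gives $|\langle\hat{w}_1,\hat{w}_2\rangle|\leq\|\mathcal{A}_{ev}\hat{w}_2\|$, a weaker bound when $\|\mathcal{A}_{ev}\hat{w}_2\|<1$.
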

 
 \begin{proof}
 Clearly it is Hermitian. Since both $\mathcal{A}_{ev}$ and $\mathcal{A}_{odd}$ are projections their eigenvalues are in $\{0,1\}$ we denote $\mathfrak{I}_{ev}$ and $\mathfrak{I}_{odd}$ the subspaces spanned by the  eigenvectors of eigenvalue 1 associated with $\mathcal{A}_{ev}$ and $\mathcal{A}_{odd}$ respectively.
 
Because $\mathcal{A}_{ev}$ and $\mathcal{A}_{odd}$ are projections we can write
\[(\mathcal{A}_{ev}+\mathcal{A}_{odd})^{2}=\mathcal{A}_{ev}+\mathcal{A}_{odd}+\mathcal{A}.\]
 So if we were able to show that the non-zero eigenvalues of $\mathcal{A}_{ev}+\mathcal{A}_{odd}$ were all at least $1$ then we could conclude that the non-zero eigenvalues of $\mathcal{A}$ are positive. This is what we shall do. 
 
 First, using the similarity relationship,
\[\mathcal{A}_{ev}+\mathcal{A}_{odd}=\mathcal{A}_{ev}+\mathcal{C}\mathcal{A}_{ev}\mathcal{C}^{-1}\]
 and so
\[(\mathcal{A}_{ev}+\mathcal{A}_{odd})\mathcal{C}=\mathcal{A}_{ev}\mathcal{C}+\mathcal{C}\mathcal{A}_{ev}.\]
 However also using the commutation relationship for $\mathcal{C}^{2}$
 \begin{align*}
 \mathcal{C}(\mathcal{A}_{ev}+\mathcal{A}_{odd})&=\mathcal{C}\mathcal{A}_{ev}+\mathcal{C}^{2}\mathcal{A}_{ev}\mathcal{C}^{-1}\\
 &=\mathcal{A}_{ev}\mathcal{C}+\mathcal{C}\mathcal{A}_{ev}.\end{align*}
Therefore $\mathcal{C}$ commutes with $\mathcal{A}_{ev}+\mathcal{A}_{odd}$. So we can find a complete set of joint eigenvectors for $\mathcal{C}$ and $\mathcal{A}_{ev}+\mathcal{A}_{odd}$. Now assume that $v$ is just such a joint eigenvector with $\kappa\neq 0$ eigenvalue for $\mathcal{A}_{ev}+\mathcal{A}_{odd}$. Its eigenvalue for $\mathcal{C}$ is a $M$-th root of unitary which we write as $e^{i\theta}$. We will assume that $v$ is normalised so $\norm{v}=1$. We compute
 \begin{align*}
 \kappa v&=(\mathcal{A}_{ev}+\mathcal{C}\mathcal{A}_{ev}\mathcal{C}^{-1})v\\
 & =\mathcal{A}_{ev}v+e^{-i\theta}\mathcal{C}\mathcal{A}_{ev}v\end{align*}
 Now $\mathcal{A}_{ev}v\in \mathfrak{I}_{ev}$ therefore we can write $v$ as
\[v=w_{1}+w_{2}\]
 where $w_{1}\in \mathfrak{I}_{ev}$ and $w_{2}=e^{-i\theta}\mathcal{C}w_{1}$. Since $\mathcal{A}_{ev}$ and $\mathcal{A}_{odd}$ are similar with similarity transform $\mathcal{C}$ we have that  $w_{2}\in\mathfrak{I}_{odd}$. Since $\mathcal{C}$ is unitary $\norm{w_{1}}=\norm{w_{2}}$. Now
 \begin{align*}\kappa&=\langle v,(\mathcal{A}_{ev}+\mathcal{A}_{odd})v\rangle\\
 &=\langle v,w_{1}+\mathcal{A}_{odd}w_{1}+\mathcal{A}_{ev}w_{2}+w_{2}\rangle\\
 &=1+\langle w_{1},\mathcal{A}_{odd}w_{1}\rangle+\langle w_{2},\mathcal{A}_{odd}w_{1}\rangle+\langle w_{1},\mathcal{A}_{ev}w_{2}\rangle+\langle w_{2},\mathcal{A}_{ev}w_{2}\rangle\\
 &=1+\norm{w_{1}}^{2}\left(\langle \hat{w}_{1},\hat{w}_{2}\rangle+\langle \hat{w}_{2},\hat{w}_{1}\rangle+\langle \hat{w}_{1},\mathcal{A}_{odd}\hat{w}_{1}\rangle+\langle \hat{w}_{2},\mathcal{A}_{ev}\hat{w}_{2}\rangle\right).\end{align*}
 The matrices $\mathcal{A}_{ev}$ and $\mathcal{A}_{odd}$ are orthogonal projections so both $\langle \hat{w}_{1},\mathcal{A}_{odd}\hat{w}_{1}\rangle$ and $\langle \hat{w}_{2},\mathcal{A}_{ev}\hat{w}_{2}\rangle$ are positive real numbers. They  are equal to the magnitudes of the orthogonal projection of $\hat{w}_{1}$ onto $\mathfrak{I}_{odd}$ and of $\hat{w}_{2}$ onto $\mathfrak{I}_{ev}$ respectively. On the other hand $|\langle \hat{w}_{1},\hat{w}_{2}\rangle|$ is the magnitude of the projection of $\hat{w}_{1}$ onto $\hat{w}_{2}$ or vice-versa. Therefore since $\hat{w}_{1}\in\mathfrak{I}_{ev}$ and $\hat{w}_{2}\in\mathfrak{I}_{odd}$,
\[ |\langle \hat{w}_{1},\hat{w}_{2}\rangle|\leq\langle \hat{w}_{2},\mathcal{A}_{ev}\hat{w}_{2}\rangle\quad\text{and}\quad|\langle \hat{w}_{1},\hat{w}_{2}\rangle|\leq \langle \hat{w}_{1},\mathcal{A}_{odd}\hat{w}_{1}\rangle\]
 and so
 \[\kappa\geq{}1.\]
\end{proof}

So we may conclude that $\mathcal{A}$ is positive semi-definite and therefore that $\mathcal{M}$ has positive eigenvalues. We now use that to state a lower bound on the trace of $\mathcal{M}$. 

\begin{prop}\label{prop:lb}
Let $\mathcal{M}=\mathcal{A}\mathcal{W}$. Then
\[ \Tr(\mathcal{M})\geq{}L^{2MN}.\]
\end{prop}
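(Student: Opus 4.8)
The plan is to exploit two facts established in the preceding material: that $\mathcal{M}=\mathcal{A}\mathcal{W}$ has only nonnegative eigenvalues (it is the product of two Hermitian positive semi-definite matrices, hence similar to the positive semi-definite matrix $\mathcal{A}^{1/2}\mathcal{W}\mathcal{A}^{1/2}$), and that $\mathcal{X}_{\1_{\P}}$ is a joint eigenvector of $\mathcal{A}_{ev}$, $\mathcal{A}_{odd}$ and $\mathcal{W}$. Concretely, $\1_{\P}$ is an eigenvector of $\mathcal{W}=\T_{\P}\T_{\P}^{\star}$ by Lemma \ref{lem:inv}, with eigenvalue $\Lambda\geq L^{2MN}$ by \eqref{alphalb}; and $\1_{\P}$ is an eigenvector of eigenvalue $1$ for both $\mathcal{A}_{ev}$ and $\mathcal{A}_{odd}$, hence of eigenvalue $2$ for $\mathcal{A}=\mathcal{A}_{ev}\mathcal{A}_{odd}+\mathcal{A}_{odd}\mathcal{A}_{ev}$. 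Therefore $\1_{\P}$ is an eigenvector of $\mathcal{M}$ with eigenvalue $2\Lambda\geq 2L^{2MN}$.

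Next I would write $\Tr(\mathcal{M})$ as the sum of its eigenvalues. Since every eigenvalue of $\mathcal{M}$ is nonnegative (Lemma \ref{lem:posdef} gives $\mathcal{A}\succeq 0$, and $\mathcal{W}\succeq 0$ since it is in $\T_{\P}\T_{\P}^{\star}$ form, so the product has nonnegative spectrum), the trace is bounded below by any single eigenvalue. Selecting the eigenvalue $2\Lambda$ attached to $\1_{\P}$ yields
\[
\Tr(\mathcal{M})\;\geq\; 2\Lambda\;\geq\;2L^{2MN}\;\geq\;L^{2MN},
\]
which is the claimed inequality (in fact with a factor $2$ to spare, which is harmless).

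One technical point to be careful about: $\mathcal{M}$ is a product of two Hermitian matrices and so need not itself be Hermitian, so "the eigenvalues are nonnegative" must be justified rather than taken for granted. The clean way is the similarity $\mathcal{M}=\mathcal{A}\mathcal{W}\sim \mathcal{A}^{1/2}\mathcal{W}\mathcal{A}^{1/2}$ on the range of $\mathcal{A}$, the latter being manifestly positive semi-definite; alternatively one invokes the standard fact that the product of two positive semi-definite matrices has real nonnegative spectrum. Either way the trace, being the sum of these real nonnegative numbers, dominates the single known eigenvalue. I do not anticipate a genuine obstacle here — the real work was front-loaded into Lemma \ref{lem:posdef} and Lemma \ref{lem:inv}; this proposition is just the bookkeeping step that converts "$\1_{\P}$ is a large eigenvector of $\mathcal{M}$" into a trace lower bound, to be paired later with a trace upper bound coming from the loop/weaving analysis.
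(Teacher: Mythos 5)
Your proof is correct and takes essentially the same route as the paper: use Lemma \ref{lem:inv} and \eqref{alphalb} to identify $\1_{\P}$ as an eigenvector of $\mathcal{W}$ with eigenvalue $\Lambda\geq L^{2MN}$, note it is also an eigenvector of $\mathcal{A}$, invoke Lemma \ref{lem:posdef} to see that all eigenvalues of $\mathcal{M}=\mathcal{A}\mathcal{W}$ are nonnegative, and bound the trace below by that one eigenvalue. You have even caught a small slip in the paper, which asserts the eigenvalue of $\mathcal{M}$ on $\1_{\P}$ is $\Lambda$ rather than $2\Lambda$ (since $\mathcal{A}\1_{\P}=2\1_{\P}$); as you observe this is harmless, and your justification via the similarity $\mathcal{A}\mathcal{W}\sim\mathcal{A}^{1/2}\mathcal{W}\mathcal{A}^{1/2}$ is a welcome clarification of the nonnegative-spectrum claim that the paper states without proof.
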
 

\begin{proof}

We will exploit the fact that the vector of $1$s is a joint eigenvector (with eigenvalues $1$ and $\Lambda$ respectively) of $\mathcal{A}_{ev}$, $\mathcal{A}_{odd}$ and $\mathcal{W}$. Therefore it is an eigenvector of $\mathcal{M}$ with eigenvalue $\Lambda$. From \eqref{alphalb} we have seen that
\[\Lambda\geq{}L^{2MN}.\]
Since all the eigenvalues of $\mathcal{M}$ are non-negative
\[\Tr(\mathcal{M})\geq{}L^{2MN}.\]

\end{proof}

On the other hand we will show, in Proposition \ref{prop:ub}, that the trace of $\mathcal{M}$ involves sums over loops and this will lead to an upper bound on the trace. The two bounds together control the growth of $N$. First we prove that $\mathcal{A}$ does indeed give us the sum over loops. We say that $\bsigma$ weaves with $\btau$, notated $\bsigma\weave\btau$ if
\begin{equation}
\sigma_{1}^{-1}\sigma_{M}\tau_{M}^{-1}\tau_{M-1}\cdots\tau^{-1}_{2}\tau_{1}=e\label{weavingdef}\end{equation}

\begin{lem}\label{lem:Aweave}
The matrices $\mathcal{A}_{odd}\mathcal{A}_{ev}$ and $\mathcal{A}_{ev}\mathcal{A}_{odd}$ are given by,
\begin{equation}
\mathcal{A}_{odd}\mathcal{A}_{ev}=\frac{1}{D^{1-\frac{1}{M}}}\begin{cases}
1 & \bsigma \weave \btau\\
0 &\text{ otherwise.}
\end{cases}\label{weaving1}\end{equation}
\begin{equation}
\mathcal{A}_{ev}\mathcal{A}_{odd}=\frac{1}{D^{1-\frac{1}{M}}}\begin{cases}
1 & \btau \weave \bsigma\\
0 &\text{ otherwise.}
\end{cases}\label{weaving2}\end{equation}
\end{lem}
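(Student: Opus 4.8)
The plan is to evaluate both matrix products entrywise and reduce each to a counting problem. Since $\mathcal{A}_{ev}$ and $\mathcal{A}_{odd}$ are Hermitian and each equals $D^{-1/2}$ times the indicator of a symmetric relation, ordinary matrix multiplication gives
\[(\mathcal{A}_{odd}\mathcal{A}_{ev})(\bsigma,\btau)=\frac{1}{D}\,\#\bigl\{\boldsymbol{\rho}\in(S_{N})^{M}:\bsigma\sim_{odd}\boldsymbol{\rho}\ \text{and}\ \boldsymbol{\rho}\sim_{ev}\btau\bigr\},\]
so the whole content of \eqref{weaving1} is the number of intermediate elements $\boldsymbol{\rho}$. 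The first observation is that the two constraints on $\boldsymbol{\rho}$ together prescribe \emph{every} adjacent transition $\rho_{m}^{-1}\rho_{m-1}$ (in the mod-$M$ clock arithmetic): those with $m$ odd must equal $\sigma_{m}^{-1}\sigma_{m-1}$, and those with $m$ even must equal $\tau_{m}^{-1}\tau_{m-1}$.

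Next I would solve this system for $\boldsymbol{\rho}$. Writing each constraint as $\rho_{m-1}=\rho_{m}g_{m}$ with $g_{m}$ the prescribed transition, and telescoping once around the cycle $M\to M-1\to\cdots\to 1\to 0=M$, one sees that $\rho_{M}$ can be chosen freely in $S_{N}$ while $\rho_{M-1},\dots,\rho_{0}$ are then determined, the sole consistency requirement being that the cyclic product $g_{M}g_{M-1}\cdots g_{1}$ be the identity. Written out with the $\sigma$- and $\tau$-transitions interleaved, this product is, up to a cyclic rotation (which does not affect whether it is trivial), the left-hand side of the weaving relation \eqref{weavingdef}. Hence if $\bsigma\weave\btau$ there are exactly $|S_{N}|=D^{1/M}$ admissible $\boldsymbol{\rho}$, one for each choice of $\rho_{M}$, and otherwise there are none; dividing by $D$ produces the value $D^{-(1-1/M)}$ and proves \eqref{weaving1}.

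For \eqref{weaving2} I would not repeat the argument. Since $\mathcal{A}_{ev}$ and $\mathcal{A}_{odd}$ are Hermitian, $\mathcal{A}_{ev}\mathcal{A}_{odd}=(\mathcal{A}_{odd}\mathcal{A}_{ev})^{\star}$, and because the entries of $\mathcal{A}_{odd}\mathcal{A}_{ev}$ are real this gives $(\mathcal{A}_{ev}\mathcal{A}_{odd})(\bsigma,\btau)=(\mathcal{A}_{odd}\mathcal{A}_{ev})(\btau,\bsigma)$, which by \eqref{weaving1} equals $D^{-(1-1/M)}$ precisely when $\btau\weave\bsigma$. The one step that genuinely needs care is the bookkeeping in the middle paragraph: one must keep the mod-$M$ indexing and the interleaving of the $\sigma$- and $\tau$-transitions straight so that the cyclic product collapses to exactly \eqref{weavingdef} and not to some re-indexed variant. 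Everything else is routine.
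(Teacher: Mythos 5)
Your proof is correct and follows essentially the same route as the paper: both expand the $(\bsigma,\btau)$ entry of $\mathcal{A}_{odd}\mathcal{A}_{ev}$ as $\tfrac{1}{D}$ times the count of intermediate $\boldsymbol{\rho}$ with prescribed odd and even transitions, telescope the cyclic product of transitions to obtain the weaving condition (up to cyclic conjugation, which is immaterial), count $|S_{N}|=D^{1/M}$ solutions by noting one $\rho_m$ determines the rest, and then deduce \eqref{weaving2} from \eqref{weaving1} via the Hermitian conjugate. No gaps.
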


\begin{proof}
Since they are Hermitian conjugates of each other it is only necessary to prove \eqref{weaving1}. 
\[\mathcal{A}_{odd}\mathcal{A}_{ev}(\bsigma,\btau)=\frac{1}{D}\sum_{\substack{\bmu\sim_{odd}\bsigma\\ \bmu \sim_{ev} \btau}}1.\]
Let's first examine a necessary condition for $\mathcal{A}_{odd}\mathcal{A}_{ev}(\bsigma,\btau)$ to be nonzero. Clearly we require that there be at least one $\bmu$ with the same odd transitions as $\bsigma$ and the same even transitions as $\btau$. Now
\[(\mu_{1}^{-1}\mu_{M})(\mu_{M}^{-1}\mu_{M-1})\cdots (\mu_{3}^{-1}\mu_{2})(\mu_{2}^{-1}\mu_{1})=e\]
so we must have that
\[(\sigma_{1}^{-1}\sigma_{M})(\tau_{M}^{-1}\tau_{M-1})\cdots(\sigma_{3}^{-1}\sigma_{2})(\tau_{2}^{-1}\tau_{1})=e\]
or $\bsigma\weave \btau$. If $\bmu$ has the same odd transitions as $\bsigma$ and the same even transitions as $\btau$ and we know one $\mu_{m}$ (for instance $\mu_{1}$) the other $\mu_{m'}$ can be calculated from the transitions. So if $(\bsigma,\btau)$ satisfy the necessary condition $\bsigma\weave\btau$ there are $|S_{N}|=D^{\frac{1}{M}}$ $\bmu$ that can appear in the sum. Therefore we have established \eqref{weaving1}. 
\end{proof}

We are now in a position to find an upper bound on the trace of $\mathcal{M}$.

\begin{prop}\label{prop:ub}
If $\mathcal{M}=\mathcal{A}\mathcal{W}$ then if $\epsilon>0$ arises from the a priori control on $N$ (i.e. $N\leq\mathfrak{R}(\lambda)^{-\frac{\epsilon}{M^{2}}}$)  then \begin{equation}\Tr(\mathcal{M})\leq 2^{NM}N^{-N(M-1)+\epsilon N}B^{2MN}.
\label{trub}\end{equation}
\end{prop}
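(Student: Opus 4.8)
The plan is to evaluate $\Tr(\mathcal M)=\Tr(\mathcal A\mathcal W)$ by expanding it as a sum over the pairs $(\bsigma,\btau)$ that weave in the sense of \eqref{weavingdef}, to bound each term by rerunning the weaving construction against the non-degeneracy hypothesis, and then to carry out the combinatorics. First, by Lemma~\ref{lem:Aweave} the matrix $\mathcal A=\mathcal A_{ev}\mathcal A_{odd}+\mathcal A_{odd}\mathcal A_{ev}$ has entries $\mathcal A(\bsigma,\btau)=D^{-(1-1/M)}\bigl(\1[\bsigma\weave\btau]+\1[\btau\weave\bsigma]\bigr)$, so since $\mathcal W(\btau,\bsigma)=\tfrac1D\U(P_{\btau},P_{\bsigma})$,
\[
\Tr(\mathcal M)=\sum_{\bsigma,\btau}\mathcal A(\bsigma,\btau)\,\mathcal W(\btau,\bsigma),\qquad\text{hence}\qquad |\Tr(\mathcal M)|\le\frac{2}{D^{2-1/M}}\sum_{\bsigma\weave\btau}\bigl|\U(P_{\bsigma},P_{\btau})\bigr|,
\]
and everything reduces to the sum on the right.

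Fix a weaving pair and run the weaving construction on $\U(P_{\bsigma},P_{\btau})=\prod_{m,j}T_m(p^m_{\sigma_m(j)})\bigl(T_m(p^m_{\tau_m(j)})\bigr)^{\star}$: as $j$ runs over $1,\dots,N$ the $MN$ factors split into $N$ disjoint length-$M$ chains, each of which (because $\bsigma\weave\btau$) closes into a loop of the shape in \eqref{loopcond}. For the loop indexed by $j$, either all $M$ of its relations $\CC_m$ hold — and then condition [3] of Definition~\ref{def:system} forces all the points it visits to coincide — or one relation fails, so that factor is $\le\mathfrak R(\lambda)$ by condition [2] while the others are $\lesssim B^{2}$ by \eqref{trivest}. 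Writing $\ell(\bsigma,\btau)$ for the number of loops of the second (non-trivial) kind, and assuming as we may that $\mathfrak R(\lambda)\le 1\le B^{2}$, this yields
\[
\bigl|\U(P_{\bsigma},P_{\btau})\bigr|\ \le\ B^{2MN}\Bigl(\tfrac{\mathfrak R(\lambda)}{B^{2}}\Bigr)^{\ell(\bsigma,\btau)}.
\]
Chasing the loop identities shows that all $N$ loops are trivial exactly when $\sigma_m=\tau_m$ for every $m$, i.e. precisely on the diagonal, which supplies $D$ weaving pairs each contributing $\le B^{2MN}$. For a pair with exactly $\ell\ge1$ non-trivial loops one chooses $\bsigma$ ($D$ ways) and the set of non-trivial loops ($\binom N\ell$ ways); the trivial loops then pin down $\btau$ at $N-\ell$ prescribed arguments, leaving $\ell$ free values in each $\tau_m$, so there are at most $\binom N\ell D(\ell!)^{M}$ such pairs.

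Combining, and using $2D/D^{2-1/M}=2/(N!)^{M-1}$,
\[
|\Tr(\mathcal M)|\ \le\ \frac{2B^{2MN}}{(N!)^{M-1}}\Bigl(1+\sum_{\ell=1}^{N}\binom N\ell(\ell!)^{M}\Bigl(\tfrac{\mathfrak R(\lambda)}{B^{2}}\Bigr)^{\ell}\Bigr).
\]
Since $\ell\le N\le\mathfrak R(\lambda)^{-\epsilon/M^{2}}$ we have $\binom N\ell(\ell!)^{M}(\mathfrak R/B^{2})^{\ell}\le(N^{M}\mathfrak R(\lambda))^{\ell}\le\mathfrak R(\lambda)^{(1-\epsilon/M)\ell}$, so the bracketed sum is $O(1)$, and in particular is $\le N^{\epsilon N}$. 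Finally Stirling's inequality $N!\ge(N/e)^{N}$ turns $(N!)^{-(M-1)}$ into $e^{N(M-1)}N^{-N(M-1)}$; absorbing this exponential, the contribution of the bracket, and the implicit constant in \eqref{trivest} into a single exponential $2^{NM}$ (the precise base is immaterial, being swept into the constant of Theorem~\ref{thm:simsat}) produces \eqref{trub}.

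The crux is the middle step: checking that the chain of permutation identities thrown off by the weaving, read loop by loop and then over all $j$, genuinely forces $\bsigma=\btau$ — this is the one place where the non-degeneracy condition [3] is used, and it is what pins the non-decaying weaving pairs down to the diagonal — together with counting the weaving pairs with a prescribed number of non-trivial loops sharply enough that the a priori control on $N$, rather than any cancellation, is all that is needed to kill the off-diagonal contribution. The manipulations with $\mathcal A_{ev},\mathcal A_{odd}$ and the geometric-series estimate are routine once Lemma~\ref{lem:Aweave} is in hand.
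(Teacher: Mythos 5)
Your proof is correct and arrives at a (slightly stronger) version of \eqref{trub}, but the organisation differs from the paper's in a way worth pointing out. The paper expands each diagonal entry $\mathcal M(\bsigma,\bsigma)$ as a sum over $\bmu$ weaving with $\bsigma$ and then dichotomises by Hamming distance: if $d_H(\bmu,\bsigma)>\epsilon N$ it manufactures $\ge \epsilon N/M$ decaying kernels to kill the term; if $d_H(\bmu,\bsigma)\le\epsilon N$ it simply accepts the trivial bound $B^{2MN}$ and counts those $\bmu$ crudely (at most $2^{MN}N^{\epsilon N}$). This is where the $N^{\epsilon N}$ factor in \eqref{trub} comes from. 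You instead stratify by the number $\ell$ of non-trivial closed loops produced by the weaving, derive the pointwise bound $|\U|\le B^{2MN}(\mathfrak R/B^2)^{\ell}$, count the weaving pairs with a prescribed set of $\ell$ non-trivial loops (the key observation — correct, but worth spelling out since it is not in the paper — being that once the loop at $j$ is forced to be trivial at levels $1,\dots,m-1$, the site it visits at level $m$ is $\sigma_m^{-1}\sigma_1(j)$, depending on $\bsigma$ alone, so $\tau_m$ is pinned to agree with $\sigma_m$ at $N-\ell$ predetermined arguments), and resolve the resulting geometric series using the a priori control $N\le\mathfrak R(\lambda)^{-\epsilon/M^2}$. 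The upshot is a cleaner estimate $\Tr(\mathcal M)\lesssim (N!)^{-(M-1)}B^{2MN}$ with no $N^{\epsilon N}$ term at all, which of course implies \eqref{trub} after Stirling and absorbing exponentials; conversely your $e^{N(M-1)}$ from Stirling exceeds $2^{NM}$ once $M\ge 4$, but since the proposition is only ever invoked modulo a $C^{MN}$ multiplicative constant in the proof of Theorem~\ref{thm:simsat} this discrepancy is immaterial (and the paper's own proof ends with $C^{MN}$ rather than $2^{NM}$). One caveat you and the paper share: both the paper's $|\U|\le B^{2MN}\mathfrak R^{A/M}$ and your $\binom N\ell(\ell!)^{M}(\mathfrak R/B^2)^{\ell}\le(N^M\mathfrak R)^{\ell}$ silently use $B\ge 1$; this should be flagged as a normalisation (harmless, since only $B/L$ appears in \eqref{Nbound}, so one may rescale $T_m\mapsto T_m/B$), but left implicit it is a gap. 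Otherwise the argument is sound, the use of condition~[3] is exactly where it has to be, and the loop-counting replacement for the Hamming-distance dichotomy is the more transparent of the two routes.
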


\begin{proof}

We have seen that in order to avoid decay of $\U(P_{\bsigma},P_{\btau})$ we can must be able to construct sequences
 \[p_{\sigma_{1}(j)}^{1}\xleftrightarrow[\CC_{1}] {}p_{\tau_{1}(j)}^{1}\xleftrightarrow[\CC_{2}]{}p_{\sigma_{1}\tau_{2}^{-1}\tau_{1}(j)}^{2}\xleftrightarrow[\CC_{3}]{}p^{3}_{\tau_{3}\sigma_{3}^{-1}\sigma_{2}\tau_{2}^{-1}\tau_{1}(j)}\xleftrightarrow[\CC_{4}]{}\cdots\xleftrightarrow[\CC_{d}]{}p^{M}_{\sigma_{M}\tau_{M}^{-1}\tau_{M-1}\cdots\tau_{2}^{-1}\tau_{1}.}\]
and this closes to be a loop if
\begin{equation}\sigma_{M}\tau_{M}^{-1}\tau_{M-1}\cdots\tau_{2}^{-1}\tau_{1}=\sigma_{1}\Rightarrow \sigma_{1}^{-1}\sigma_{M}\tau_{M}^{-1}\tau_{M-1}\cdots\tau_{2}^{-1}\tau_{1}=e\label{loopcond1}\end{equation}
that is $\bsigma\weave\btau$. Note that we could have started from $\tau_{1}$ in that case the condition to close the loop becomes
\begin{equation}\tau_{1}^{-1}\tau_{M}\sigma_{M}^{-1}\sigma_{M-1}\cdots\sigma_{2}^{-1}\sigma_{1}=e\label{looprevcond}\end{equation}
that is $\btau\weave\bsigma$. 

Now consider a trace element of $\mathcal{M}$, $\mathcal{M}(P_{\bsigma},P_{\bsigma})$. This is given by
\[\frac{1}{D^{2-\frac{1}{M}}}\left(\sum_{\bsigma\weave\bmu}\U(P_{\bmu},P_{\bsigma})+\sum_{\bmu\weave\bsigma}\U(P_{\bmu},P_{\bsigma})\right)\]
so these elements do obey a loop condition. For notational convenience we will notate the points that make up the loop as $a_{1},\dots,a_{M}$. 

\begin{figure}[h!]\label{fig:bigloop}
\[\begin{tikzcd}
	&& {a_{2}} \\
	{a_{1}} \\
	&&&& {} & {} \\
	& {a_{M}} && \cdots && {a_{3}}
	\arrow["{\CC_{1}}", tail reversed, from=2-1, to=1-3]
	\arrow["{\CC_{2}}", tail reversed, from=1-3, to=4-6]
	\arrow["{\CC_{M-1}}", tail reversed, from=4-4, to=4-2]
	\arrow["{\CC_{3}}", tail reversed, from=4-6, to=4-4]
	\arrow["{\CC_{M}}", tail reversed, from=4-2, to=2-1]
\end{tikzcd}\]
\caption{We produce a closed loop involving $a_{1},\dots,a_{M}$ and the relations $\CC_{1},\dots\CC_{M}$. }
\end{figure}

The non-degeneracy condition on the operators $T_{m}$ tell us that this loop can only occur if $a_{1}=a_{2}=\cdots=a_{M}$. So either the loop is the trivial one or one of the $a_{i}\CC_{i}a_{i+1}$ does not hold. If the $i$-th relation doesn't hold then that term experiences rapid decay. This observation allows us to see that the main contribution to the trace comes from the terms where $\bmu=\bsigma$.  In fact if the permutations are not quite equal but differ only at a small number of sites the decay from the product kernels isn't enough to ignore these terms. To obtain enough decay we need to assume that $\bmu$ and $\bsigma$ disagree at a positive proportion of sites. 

First we will treat the sum where $\bmu\weave\bsigma$  (the other sum follows by the same argument).

Let
\begin{equation}
\mathfrak{D}^{\epsilon}(\bsigma)=\left\{\bmu\,\Bigg| \substack{\bmu\weave\bsigma\\ 
d_{H}(\bmu,\bsigma)\leq \epsilon N}\right\}\label
{diagdef}\end{equation}
where $d_{H}(\cdot,\cdot)$ is the Hamming distance. Now suppose that $\bmu\in (\mathfrak{D}^{\epsilon}(\bsigma))^{c}$. There are $A\geq{}\epsilon N$ pairs $(m,j)$ so that $\mu_{m}(j)\neq\sigma_{m}(j)$, we may as well assume that $A$ is divisible by $M$.  Let $(m,j)$ be a such a pair. We start a weaving process from this point to construct a non-trivial loop
\[p_{\mu_{m}(j)}\xleftrightarrow[\CC_{m}]{}p_{\sigma_{m}(j)}\xleftrightarrow[\CC_{m+1}]{} p_{\mu_{m+1}\sigma_{m+1}^{-1}\sigma_{m}(j)}\xleftrightarrow[\CC_{m+3}]{}\cdots\xleftrightarrow[\CC_{m-1}]{}p_{\mu_{m}(j)}.\]
Therefore one of the kernels
\begin{align*}&T_{m}(p_{\mu_{m}(j)})(T_{m}(p_{\sigma_{m}(j)}))^{\star}\\
&T_{m+1}(p_{\mu_{m+1}\sigma_{m+1}^{-1}\sigma_{m}(j)})(T_{m+1}(p_{\sigma_{m}(j)}))^{\star}\\
&\vdots\\
&T_{m-1}(p_{\mu_{m}(j)})(T(p_{\sigma_{m-1}^{-1}\cdots \sigma_{m}(j)}))^{\star}\end{align*}
must rapidly decay as $\mathfrak{R}(\lambda)$. Possibly we have used some other $(m',j')$ such that $\mu_{m'}(j')\neq\sigma_{m'}(j')$ in this loop but at most we have only used $M$ of them, so we have $A-M$ remaining. Pick one of those and repeat the process to general another decaying kernel. We then have $A-2M$ remaining. Repeat this process $A/M$ times to produce $A/M$ kernels each decaying. Therefore
\begin{align}|\U(P_{\bmu},P_{\bsigma})|&\leq  B^{2MN}\mathfrak{R}(\lambda)^{\frac{A}{M}}\nonumber\\
&\leq B^{2MN}\mathfrak{R}(\lambda)^{\frac{\epsilon N}{M}}.\label{trivterms}\end{align}
So these terms each make a negligible contribution to the trace.

Now it remains to treat the terms where $\bmu\in\mathfrak{D}^{\epsilon}(\bsigma)$. For these terms we accept a trivial  bound of $B^{2}$ (from the kernel bound, assumption \eqref{trivest}). However there are relatively few such terms. Given a particular set of (no more than $\epsilon N$) sites $(m,j)$ where $\mu_{m}(j)\neq \sigma_{m}(j)$ there can be no more than $N^{\epsilon N}$ ways to chose $\bmu$.  There are no more than $2^{MN}$ ways to pick the sets of sites so
\begin{equation}|\mathfrak{D}(\bsigma)|\leq 2^{MN}N^{\epsilon N}.\label{diagonalbnd}\end{equation}

Combining \eqref{trivterms} and \eqref{diagonalbnd} we obtain the estimate
\[|\mathcal{M}(\bsigma,\bsigma)|\leq D^{-2+\frac{1}{M}}2^{NM}B^{2MN}\left(N^{\epsilon N}+D^{1-\frac{1}{M}}\mathfrak{R}(\lambda)^{\frac{\epsilon N}{M}}\right).\]
 Recalling that $D=|S_{N}|^{M}=(N!)^{M}$ we then have that
  \[\Tr(\mathcal{M})\leq  C^{MN}N^{-N(M-1)}B^{2MN}\left(N^{\epsilon N}+N^{(M-1)N}\mathfrak{R}(\lambda)^{\frac{\epsilon N}{M}}\right).\]
  The a priori control on $N$ then allows us to say that
  \[\Tr(\mathcal{M})\leq C^{MN}B^{2MN}N^{-N(M-1)+\epsilon N}.\]
 \end{proof}

We can now finally prove Theorem \ref{thm:simsat}. Putting together the lower and upper bounds for $\Tr(\mathcal{M})$ (Propositions \ref{prop:lb} and \ref{prop:ub} respectively) we have 
\[L^{2MN}\leq C^{MN}N^{-N(M-1)+\epsilon}B^{2MN}\]
 so
\begin{equation}N^{1-\frac{\epsilon}{M-1}}\leq C^{M} \left(\frac{B}{L}\right)^{\frac{2M}{M-1}}.\label{evenNbnd}\end{equation}

 \subsection*{Case 2: $M$ is odd}
 
 We now turn our attention to the case where $M$ is odd. The case where $M=1$ is a little different and we will treat it separately. For $M$ odd and greater than $1$ we embed it in a $M+1$ system. This amounts to adding another condition $\CC_{M+1}$ given by $p\CC_{M+1}q$ if $p=q$.  
 
 Let $\nu$ be the measure on $X$ given by
 \[\nu=\frac{1}{N}\sum_{j=1}^{N}\delta_{p_{j}}\]
 where $\delta_{p_{j}}$ is the standard point mass measure at $p_{j}$. Then define $T_{M+1}:L^{2}(\nu)\to X^{\C}$ by
 \[T_{M+1}f|_{p}=\frac{B}{N^{1/2}}\begin{cases}
0 & p\notin\{p_{1},\dots,p_{N}\}\\
f(p_{j}) & p=p_{j}, \text{ for some }j\in\{1,\dots,N\}.\end{cases}\]
We now consider the system $\mathbb{T}_{\lambda}=\{T_{1},\dots, T_{M},T_{M+1}\}$. 
For any pair of points $(p,q)$,
\[T_{M+1}(p)\left(T_{M+1}(q)\right)^{\star}=\begin{cases}
B^{2} & p=q\\
0 & p\neq q \end{cases}\]
so $\CC_{M+1}$ is the identity relation. Clearly therefore this system still obeys the non-degeneracy condition and by picking $(f_{1},\dots,f_{M},1)$ we can still obtain a lower bound, namely $\left(L^{\frac{M}{M+1}}B^{\frac{1}{M+1}}N^{-\frac{1}{2(M+1)}}\right)^{M+1}$. Therefore we may apply the equation \eqref{evenNbnd} for a   $M+1$ multilinearity,
\begin{align*}
N^{1-\frac{\epsilon}{M-1}}&\leq C^{M+1} \left(\frac{B^{1+\frac{1}{M+1}}N^{\frac{1}{2(M+1)}}}{L^{\frac{M}{M+1}}}\right)^{\frac{2(M+1)}{M}}\\
N^{\frac{M-1}{M}-\frac{\epsilon}{M-1}}&\leq C^{M+1} \frac{B^{2}}{L^{2}}\\
N^{1-\frac{\epsilon M}{(M-1)^{2}}}&\leq C^{M\frac{M+1}{M-1}}\left(\frac{B}{L}\right)^\frac{2M}{M-1}\end{align*}

Finally if $M=1$ the decay and non-degeneracy conditions of Definition \ref{def:system} ensure that
\[T_{1}(p)(T_{1}(q))^{\star}\]
rapidly decays if $p\neq q$. Therefore instead of defining $\mathcal{M}=\mathcal{A}\mathcal{W}$ we define $\mathcal{M}=\mathcal{S}\mathcal{W}$ where
\[\mathcal{S}(\bsigma,\btau)=\frac{1}{N!}.\]
Clearly $\mathcal{S}$ is Hermitian positive semi-definite and the vector of ones is a eigenvector of eigenvalue $1$. Therefore we obtain
\[L^{N}\leq \Tr(\mathcal{M})\]
as before. In this case $\mathcal{W}(\bsigma,\btau)$ decays if $\bsigma\neq \btau$ so from the same analysis as the even $M$ case
\[\Tr(\mathcal(M))\leq C^{N} B^{2N}N^{-N+\epsilon N}\]
yielding
\[N^{1-\epsilon}\leq C\frac{B^{2}}{L^{2}}\]

This completes the proof of Theorem \ref{thm:simsat}.

\section{Multilinear extension estimates}\label{sec:ML}

In this section we apply the method of simultaneous saturation to establish $k$-linear restriction estimates under a mixture of transversality and curvature hypotheses.  Multilinear restriction/extension theory typically replaces the curvature conditions that appear in the classical linear theory with transversality assumptions.  A detailed account of the development of this framework and its motivating heuristics is given in the survey of Bennett \cite{Ben14}.

The $k$-linear extension problem asks for which exponents $(p,q)$ the inequality
\begin{equation}
\norm{\prod_{m=1}^{k}\mathcal{E}_{m}f_{m}}_{L^{p/k}}\le C(p)\prod_{m=1}^{k}\norm{f_{m}}_{L^{q}(H_{m})}\label{eqn:klin}
\end{equation}
holds. Here each $H_{m}$ is a smooth, compactly supported hypersurface, and
\[\mathcal{E}_{m}f(x)
=\int_{H_{m}}e^{i\langle x,\xi\rangle}f(\xi),d\mu_{m}(\xi)\]
is the associated extension operator.  Typically, the hypersurfaces $$(H_{1},\dots,H_{k})$$ are assumed to satisfy a (uniform) transversality condition
\[|\nu_{1}(\xi^{1})\wedge\cdots\wedge\nu_{k}(\xi^{k})|\ge c>0,\]
where $\nu_{m}(\xi^{m})$ denotes the unit normal to $H_{m}$ at the point $\xi^{m}\in H_{m}$.  This transversality assumption replaces curvature in the multilinear setting, however the transversality assumption is motivated by the classical curvature assumption. Consider the linear problem with a curved hypersurface. One may obtain a bilinear formulation satisfying transversality by selecting two well-separated subsets $H_{1},H_{2}\subset H$; the curvature of $H$ ensures the required transversality between the corresponding normals.

It is standard to reduce such extension problems to parameter-dependent versions by localising to a ball of radius $\lambda$ and requiring a uniform in $\lambda$ estimate
\begin{equation}\label{eqn:locklin}
\norm{\prod_{m=1}^{k}\mathcal{E}_{m}f_{m}}_{L^{p/k}(\mathbb{B}_{\lambda})}
\le C(p)\prod_{m=1}^{k}\norm{f_{m}}_{L^{q}(H_{m})}.
\end{equation}
After rescaling, this is equivalent to showing that
\begin{equation}\label{klinconj}
\norm{\prod_{m=1}^{k}\mathcal{E}_{m,\lambda}f_{m}}_{L^{p/k}(\mathbb{B}_{1})}
\le C(p)\lambda^{-dk/p}\prod_{m=1}^{k}\norm{f_{m}}_{L^{q}(H_{m})},
\end{equation}
A weaker version of \eqref{klinconj}, incorporating a $\lambda^{\epsilon}$-loss, asserts that for every $\epsilon>0$ there exists a constant $C_{\epsilon}$ such that
\begin{equation}\label{nearop}
\norm{\prod_{m=1}^{k}\mathcal{E}_{m,\lambda}f_{m}}_{L^{p/k}(\mathbb{B}_{1})}
\le C_{\epsilon}(p)\lambda^{-dk/p+\epsilon}
\prod_{m=1}^{k}\norm{f_{m}}_{L^{q}(H_{m})}.
\end{equation}

The bilinear case $k=2$ has been extensively studied; see, for instance,
\cite{Bour95}, \cite{Wolff01}, \cite{Tao01}, \cite{Tao03}, \cite{TaoVar00},
\cite{LeeVar10}, \cite{Stov17}, \cite{BusSteDetVar17},
\cite{BakLeeLee17}, \cite{TaoVarVeg98}, \cite{Bejen17}, \cite{Var05},
and \cite{Lee11}, \cite{Tem13}. In this paper we are concerned with larger $k$ but restrict our attention to the case $q=2$.

When $q=2$ and $k=1$, inequality \eqref{eqn:klin} recovers the classical Tomas–Stein restriction theorem. In this sense, the $(2,p)$  $k$-multilinear estimates may be viewed as a natural generalisation of the Tomas–Stein result into the multilinear setting.  The case $k=d$ is now fully understood.  Bennett, Carbery, and Tao \cite{BCT} established the sharp, near-optimal $(p,2)$ bounds for $p\ge \frac{2d}{d-1}$ by means of the Loomis-Whitney inequality \cite{LooWhit49} and $d$-linear transversal Kakeya inequality.  Guth subsequently strengthened the multilinear transversal Kakeya estimate \cite{G10}, removing the $\lambda^{\epsilon}$ loss.  More recently, Tao \cite{Tao20} obtained the lossless version of the $d$-linear restriction estimates for $p>\frac{2d}{d-1}$.

When only transversality is assumed, the near-optimal bounds are known to hold for $(p,2)$ with $p\ge \frac{2k}{k-1}$, see \cite{BCT} and \cite{BenCarAntWri05}. If only transversality assumptions hold these estimates are the best available. We will now see how we can use Theorem \ref{thm:simsat} to recover this bound for $k=d$ (the cases $k<d$ follow in the same fashion).

The system of operators is obviously given by $\mathbb{T}_{\lambda}=\{\mathcal{E}_{1,\lambda},\dots,\mathcal{E}_{d,\lambda}\}$. Now we need to establish the decay and non-degeneracy conditions. Since
\[|\nu_{1}(\xi^{1})\wedge\cdots\wedge \nu_{k}(\xi^{k})|\geq{}c\]
we can parameterise the hypersurfaces
\begin{align*}
&H_{1}=\{\xi\mid \xi_{1}=\Sigma_{1}(\xi_{2},\dots,\xi_{d})\}\\
&H_{2}=\{\xi \mid \xi_{2}=\Sigma_{2}(\xi_{1},\xi_{3},\dots,\xi_{d})\}\\
& \vdots\\
& H_{d}=\{\xi \mid \xi_{d}=\Sigma_{d}(\xi_{1},\dots,\xi_{d-1})\}.\end{align*}

So 
\[\mathcal{E}_{m,\lambda}\mathcal{E}_{m,\lambda}^{\star}=\int  e^{i\lambda \left((x_{m}-y_{m})\Sigma_{m}(\bar{\xi})+\langle \bar{x}-\bar{y},\bar{\xi}\rangle\right)}b(\bar{\xi})d\bar{\xi}\]
where $b$ is a smooth compactly supported (on a neighbourhood of the origin) function and $\bar{\xi}=(\xi_{1},\dots,\xi_{m-1},\xi_{m+1},\dots,\xi_{d})$. Clearly
\[|\mathcal{E}_{m}(p)(\mathcal{E}_{m}(q))^{\star}|\leq B=1.\]
To obtain decay conditions consider the critical points of the phase, to achieve a critical point we require that
\[(x_{m}-y_{m})\nabla_{\bar{\xi}}\Sigma_{m}+\bar{x}-\bar{y}=0\]
that is for each $i=1,\dots,d$, $i\neq m$
\[\left[\begin{array}{c}
0 \\
\vdots \\
\partial_{\xi_{i}}\Sigma_{m}\\
0\\
\vdots\\
1\\
0\\
\vdots\\
0\end{array}\right]\cdot (x-y)=0.\]
So $(x-y)$ must point in the normal direction to $H_{m}$ (or be zero). Let $\nu_{m}$ be the normal at $\bar{\xi}=0$, we can assume that the surfaces are small so that $|\nu_{m}(\bar{\xi})-\nu_{m}|\leq \tilde{\epsilon}$ for small $\tilde{\epsilon}$. So if $|x-y|\geq{}\lambda^{-1+\epsilon}$ but 
\[\left|\frac{x-y}{|x-y|}-(\pm\nu_{m})\right|\geq{}2\tilde{\epsilon}\]
there can never be a critical point and 
\[\left|\int  e^{i\lambda \left((x_{m}-y_{m})\Sigma_{m}(\bar{\xi})+\langle \bar{x}-\bar{y},\bar{\xi}\rangle\right)}b(\bar{\xi})d\bar{\xi}\right|=O(\lambda^{-\infty}).\]
Therefore for this system we have $\mathfrak{D}(\lambda)=\lambda^{-R}$ for any $R\in \N$ and $\mathfrak{C}_{m}$ is given by
\[p\mathfrak{C}_{m}q=\left\{(p,q)\in \R^{d}\times \R^{d}\middle|\substack{ |p-q|\leq \lambda^{-1+\epsilon}\text{ or }\\ \left|\frac{p-q}{|p-q|}-(\pm\nu_{m})\right|\leq 2\tilde{\epsilon}}\right\}.\]
Now let's see the non-degeneracy condition. Since
\[|\nu_{1}\wedge\cdots\wedge\nu_{d}|\geq{}c\]
a suitable choice of $\tilde{\epsilon}$ will ensure that if the diagram
\[\begin{tikzcd}
	& {p_{2}} && { p_{3}} \\
	{ p_{1}} &&&& { p_{4}} \\
	& {p_{n}} & \cdots & {}
	\arrow["{\mathfrak{C}_{m_{2}}}", tail reversed, from=1-2, to=1-4]
	\arrow["{\mathfrak{C}_{3}}", tail reversed, from=1-4, to=2-5]
	\arrow["{\mathfrak{C}_{m_{1}}}", tail reversed, from=2-1, to=1-2]
	\arrow["{\mathfrak{C}_{4}}", tail reversed, from=2-5, to=3-3]
	\arrow["{\mathfrak{C}_{n}}", tail reversed, from=3-2, to=2-1]
	\arrow["{\mathfrak{C}_{n-1}}", tail reversed, from=3-3, to=3-2]
\end{tikzcd}\]
holds, then
\begin{equation}\left|\frac{p_{1}-p_{2}}{|p_{1}-p_{2}|}\wedge\cdots\wedge\frac{p_{n}-p_{1}}{|p_{n}-p_{1}|}\right|\geq{}c/2.\label{directions}\end{equation}

Assume that we have selected a set of points so that distinct points obey $|p_{j}-p_{j'}|\geq{}\lambda^{-1+\epsilon}$.They must therefore lie in a $(r-1)$-dimension linear subspace, which contradicts \eqref{directions}. So some of the pairs $(p_{i},p_{i+1})$ must be the same. Therefore we can contract the cycle and we can apply the same argument again. 

\begin{figure}[h!]\label{fig:contract}
\[\begin{tikzcd}
	& {p_{2}=p_{1}} \\
	{p_{1}} && {p_{3}} && {p_{1}} && {p_{3}} \\
	& {p_{r}} &&&& {p_{r}}
	\arrow["{\CC_{1}}", tail reversed, from=2-1, to=1-2]
	\arrow["{\CC_{2}}", tail reversed, from=1-2, to=2-3]
	\arrow["{\CC_{3}}", tail reversed, from=2-3, to=3-2]
	\arrow["{\CC_{4}}", tail reversed, from=3-2, to=2-1]
	\arrow["{\CC_{2}}", tail reversed, from=2-5, to=2-7]
	\arrow["{\CC_{3}}", from=2-7, to=3-6]
	\arrow["{\CC_{4}}", tail reversed, from=3-6, to=2-5]
\end{tikzcd}\]
\caption{In this case ($r=4$) we have $p_{1}=p_{2}$ so we can collapse the loop on the left to that on the right.}

\end{figure}

So the only way that the diagram can hold is if all the points are the same. Therefore this is a uniformly bounded simultaneous system for any $X\subset \R^{d}$ which has the property that if $p,q\in X, |p-q|\leq \lambda^{-1+\epsilon}$ then $p=q$. 

Let us now select some points. Define
\[\left|T_{m}f_{m}(x)\right|\leq  1\]
so let
\begin{equation}\Omega_{i}=\left\{x\middle|\;   2^{-i-1}\leq \prod_{m=1}^{d}\left|T_{m}f_{m}(x)\right|\leq  2^{-i}\right\}\quad 0\leq  i \leq I\label{omegak}\end{equation}
with $I$ such that $2^{-I}=\lambda^{-\frac{d(d-1)}{2}}$. We will use Theorem \ref{thm:simsat} to control the size of $\Omega_{i}$. In particular we will see that for any $\epsilon>0$
\begin{equation}|\Omega_{i}|\leq C\lambda^{-d+\epsilon d}2^{\frac{2(k+1)}{(d-1)(1-\epsilon)}}\label{omegaest}\end{equation}
which for a suitable choice of $\epsilon$, implies \eqref{nearop}.

Let $p_{1}\in\Omega_{i}=\Omega_{i}^{1}$, clearly 
$$2^{-i-1}\leq \prod_{m=1}^{d}\left|T_{m}f_{m}(p_{1})\right|.$$
Now set $\Omega_{i}^{2}=\Omega^{1}_{i}\setminus \mathbb{B}_{\lambda^{-1+\epsilon}}(p_{1})$ and pick $p_{2}\in \Omega_{i}^{2}$, clearly also
$$2^{-i-1}\leq \prod_{m=1}^{d}\left|T_{m}f_{m}(p_{2})\right|.$$
Then, excise $\mathbb{B}_{\lambda^{-1+\epsilon}}(p_{2})$ from $\Omega_{i}^{2}$ and chose $p_{3}$ and so forth. The process terminates at some $N$ when $\Omega_{i}^{N+1}=\emptyset$. Comparing volumes we find that 
$$N\geq{}\frac{|\Omega_{i}|}{\lambda^{-d+\epsilon d}}$$
and of course each $p_{j}$ obeys
$$2^{-i-1}\leq \prod_{m=1}^{d}\left|T_{m}f_{m}(p_{j})\right|.$$
Clearly we have the a priori bound $N\lambda^{d}$ and since the decay rate is $O(\lambda^{-\infty})$  we may apply the results of Theorem \ref{thm:simsat} for any $\epsilon>0$, to obtain
\[N^{1-\epsilon}\leq C 2^{\frac{2(i+1)}{d-1}}\]
and so
\[|\Omega_{i}|\leq C \lambda^{-d+\epsilon d}2^{\frac{2(i+1)}{(d-1)(1-\epsilon)}}.\]
The same sort of arguments produce $k$ multilinear estimates for $p\geq{}\frac{2k}{k-1}$ under just a transversality condition. The proof of Theorem \ref{thm:simsat} makes the critical numerology of $p=\frac{2k}{k-1}$  quite clear. Suppose we are estimating $\prod_{m=1}^{k}\mathcal{E}_{m,\lambda}f_{m}$ over a region $\Omega$ covered by $N$ balls $\mathbb{B}_{\lambda^{-1+\epsilon}}(p_{j})$ so that $Tu(p_{j})$ are all around the same size. Theorem \ref{thm:simsat} we average over all possible loops with $k$ points chosen in a set of $N$ objects and find that the major contribution comes from the trivial loops that only consist of one point. Therefore since the number of loops of $k$ points grows as $N^{k}$ and the number of trivial loops grows as $N$ we expect a $N^{-(k-1)}$ improvement over the trivial estimate in the $TT^{\star}$ argument, corresponding to a $N^{-\frac{k-1}{2}}$ improvement over the trivial estimate for $\prod_{m=1}^{k}\mathcal{E}_{m,\lambda}f_{m}$. Then
\[\norm{\prod_{m=1}^{k}\mathcal{E}_{m,\lambda}f_{m}}_{L^{q}(\Omega)}\lesssim N^{-\frac{d-1}{2}}\lambda^{-\frac{d+d\epsilon}{q}}N^{\frac{1}{q}}.\]
For $q>\frac{2}{k-1}$ the decay from the simultaneous saturation result is enough to counteract the growth in measure. But for $q<\frac{2}{k-1}$ the reverse is true. The critical value $p=\frac{2}{k-1}$ is the value at which the growth and decay are perfectly matched.

The case in which only transversality is assumed is therefore understood, up to questions about endpoints.  What has remained unresolved is the situation where, for $k<d$, additional curvature is present in the “missing’’ directions.  The standing conjecture, for which we shall establish the $\lambda^{\epsilon}$ loss version, asserts that under suitable curvature hypotheses the parameter-dependent $k$-multilinear estimate \eqref{klinconj} holds for all pairs $(p,2)$ satisfying
\[p \ge p(k)= \frac{2(d+k)}{d-k-2}.\]

The conjecture was originally formulated by Foschi and Klainerman \cite{FK00} for $k=2$ on subsets of the cone and paraboloid. Bennett \cite{Ben14} extended the statement to hypersurfaces with everywhere positive curvature and Bejenaru \cite{Bej17} stated the conjecture in its present most general geometric form. There are a number of different formulations of the curvature condition. Heuristically it states that there is curvature in all ``missing'' directions. Here it is convenient to state it in terms of the shape operators of the $\mathcal{H}_{m}$ and the subspace spanned by the set $\{\nu_{1}(\xi^{1}),\dotsm\nu_{m}(\xi^{m})\}$.  Namely that for $\boldsymbol{\xi}=(\xi^{1},\dots,\xi^{m})$, if $\mathcal{Z}_{\boldsymbol{\xi}}$ is the subspace spanned by $\{\nu_{1}(\xi^{1}),\dotsm\nu_{m}(\xi^{m})\}$ and $\gamma_{\mathcal{Z}^{\perp}_{\boldsymbol{\xi}}}$ the restriction to $\mathcal{Z}_{\boldsymbol{\xi}}^{\perp}$ then for each shape operator $S_{m}$; $\gamma^{\star}_{\mathcal{Z}^{\perp}_{\boldsymbol{\xi}}}S\gamma_{\mathcal{Z}^{\perp}_{\boldsymbol{\xi}}}$ is non-degenerate. 

Significant progress toward this conjecture has been made in a sequence of works by Bejenaru.  In \cite{Bej17} and \cite{Bej19}, he established the estimate for certain model hypersurfaces, though these do not include the standard examples such as the paraboloid, sphere, or cone.  Subsequently, Bejenaru \cite{B22} proved the theorem in the case $k=d-1$ for $p>p(k)$. This result settles the conjecture in dimension $d=3$, since the case $k=2$ follows from the existing bilinear theory.

In the present paper we establish the $\lambda^{\epsilon}$ loss bound for arbitrary $k$ under Benjenaru's curvature assumptions.

Our approach is to interpolate between a simultaneous saturation result, which exploits transversality, and a Tomas-Stein type estimate, which exploits curvature. 

Using the transversality result we may associate $\nu_{1},\dots,\nu_{k}$ with the coordinate directions $e_{1},\dots,e_{k}$ in the following fashion
\begin{align*}
e_{1}&=\nu_{1}\\
e_{2}&=\frac{\nu_{2}-\mathfrak{P}_{\{e_{1}\}}(\nu_{2})}{|\nu_{2}-\mathfrak{P}_{\{e_{1}\}}|}\\
&\vdots \\
e_{k}&=\frac{\nu_{k}-\mathfrak{P}_{\{e_{1},\dots,e_{k-1}\}}(\nu_{k})}{|\nu_{k}-\mathfrak{P}_{\{e_{1},\dots,e_{k-1}\}}|}
\end{align*}
where $\mathfrak{P}_{\{e_{1},\dots,e_{n}\}}(v)$ is the projection of $v$ onto the subspace spanned by $\{e_{1},\dots,e_{n}\}$. With this orientation the transversality condition 
\[\left|\nu_{1}\wedge\cdots\wedge\nu_{k}\right|\geq{}c\]
ensures that
\[\left|\mathfrak{P}_{\{e_{1},\dots,e_{k}\}}(\nu_{1})\wedge\cdots\wedge \mathfrak{P}_{\{e_{1},\dots,e_{k}\}}(\nu_{1})\right|\geq\tilde{c}\]
where $\tilde{c}$ depends on $c$ and the dimension alone. 

Then we
parameterise the hypersurfaces as
\begin{align*}
&H_{1}=\{\xi\mid \xi_{1}=\Sigma_{1}(\xi_{2},\dots,\xi_{d})\}\\
& \vdots\\
& H_{k}=\{\xi\mid \xi_{k}=\Sigma_{k}(\xi_{1},\dots,\xi_{k-1},\xi_{k+1},\dots,\xi_{d}).\}\end{align*}
Accordingly we adopt the notation that $x=(y,z)$ where $y\in\R^{k}$ and $z\in \R^{n-k}$. We make a similar separation of the dual variables writing $(\bar{\xi},\eta)$ where $\bar{\xi}\in \R^{k-1}$ and $\eta\in \R^{d-k}$. 

In what follows we make extensive use of mixed $L^{p}$ norms. These function spaces, first introduced by Benedek and Panzone \cite{BP61}, have several notational conventions. We adopt here the iterated mixed-norm notation, with the relevant variables indicated by subscripts. This choice agrees with that most commonly employed in the literature on Strichartz spaces, from which many of our ideas will be drawn. In this notation,
\[\norm{g}_{L^{p}_{y}L^{q}_{z}}
=\left(\int \left(\int |g(y,z)|^{q},dz\right)^{p/q} dy\right)^{1/p}.\]

As with the usual Lebesgue spaces, one may also define mixed-norm spaces for exponents $0 < p <1$. Although these spaces are not complete, they arise naturally in the study of multilinear restriction and extension operators. Interpolation between mixed Lebesgue spaces proceeds as in the classical case \cite{BP61}, with the qualification that when any exponent is below one, the interpolation result holds only for simple functions. This caveat does not cause us any difficulties. Since we work on compact regions, any function can be approximated in $L^{\infty}$ by a sequence of simple functions, and hence in any mixed Lebesgue norm.

We first use a simultaneous saturation argument to establish  the bound
\[
\norm{\prod_{m=1}^{k}\mathcal{E}_{m,\lambda}f_{m}}_{L^{\frac{2}{k-1}}_{y}L^{\frac{2}{k}}_{z}}\lesssim \lambda^{-\frac{k(d-1)}{2}+\delta }\prod_{m=1}^{k}\norm{f_{m}}_{L^{2}(H_{m})}.\]
Our goal is to locate $\frac{p(k)}{k}$ within an interpolation scale where one of the interpolation endpoints is $L^{\frac{2}{k-1}}_{y}L^{\frac{2}{k}}_{z}$. To that end, we seek a pairs $(r_{1},r_{2})$ where $\frac{2}{k}\le r_{1}<p(k)$ and $p(k)<r_{2}\le \infty$ such that $L^{\frac{p(k)}{k}}_{y}L^{\frac{p(k)}{k}}_{z}$ appears as a interpolated space between $L^{\frac{2}{k-1}}_{y}L^{\frac{2}{k}}_{z}$ and $L^{r_{1}}_{y}L^{r_{2}}_{z}$. Therefore we require that there is a $\theta\in(0,1)$ so that
\[\frac{k}{p(k)}=\frac{\theta(k-1)}{2}+\frac{(1-\theta)}{r_{1}},
\qquad
\frac{k}{p(k)}=\frac{\theta k}{2}+\frac{1-\theta}{r_{2}}.\]
Several possible choices of $(r_{1},r_{2},\theta)$ are available, but it will become clear during the technical development that setting $r_{1}=2/k$ is advantageous. In this case, the interpolation relations give
\[\theta=\frac{2k}{d+k},\qquad
r_{2}=\frac{2(d-k)}{k(d-k-2)},\]
which defines a valid interpolation scale whenever $k\le d-2$.

The case where $k=d-1$ is established by Bejenaru in \cite{B22} so it is not strictly necessary to treat it here. However to complete the picture (and because it requires almost no extra work), we fix $r_{2}=\infty$ and set
\[\theta=\frac{2d-3}{2d-1},\qquad r_{1}=\frac{4}{2d-3},
\]
In Propositions \ref{prop:TSbound2} and \ref{prop:TSboundinf}, we adapt the classical Tomas–Stein $L^{2}\to L^{p}$ estimates to establish the corresponding mixed-norm bounds under the curvature assumption.

We now turn to the proof of these estimates.

\begin{prop}\label{prop:klintrans}
\begin{equation}
\norm{\prod_{m=1}^{k}\mathcal{E}_{m,\lambda}f_{m}}_{L^{\frac{2}{k-1}}_{y}L^{\frac{2}{k}}_{z}}\lesssim \lambda^{-\frac{k(d-1)}{2}+\delta }\prod_{m=1}^{k}\norm{f_{m}}_{L^{2}(H_{m})}\label{ssinterp}\end{equation}
\end{prop}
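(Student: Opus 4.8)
The idea is to strip the $d-k$ "curved'' directions $z$ off by a Hölder/Plancherel argument, landing on a genuinely $k$-linear \emph{transversal} quantity over $\R^{k}$, and then feed that quantity into the simultaneous saturation machinery of Section \ref{sec:simsat}. Normalise $\norm{f_{m}}_{L^{2}(H_{m})}=1$, put $G(y,z)=\prod_{m}|\mathcal{E}_{m,\lambda}f_{m}(y,z)|$, and set $U_{m}(y)=\norm{\mathcal{E}_{m,\lambda}f_{m}(y,\cdot)}_{L^{2}_{z}}$ (the inner $L^{2}$ taken over the unit ball in $\R^{d-k}$). Hölder in $z$ with $k$ exponents all equal to $k$ gives $\int_{z}G(y,z)^{2/k}\,dz\le\prod_{m}U_{m}(y)^{2/k}$, hence $\norm{G}_{L^{2/(k-1)}_{y}L^{2/k}_{z}}\le\norm{\prod_{m}U_{m}}_{L^{2/(k-1)}_{y}}$. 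With the parameterisation $H_{m}=\{\xi_{m}=\Sigma_{m}\}$ of the excerpt and the dual splitting $\xi=(\bar\xi,\eta)$, the variable $z$ pairs only with $\eta$, so $z\mapsto\mathcal{E}_{m,\lambda}f_{m}(y,z)$ is the $(\lambda z)$-Fourier transform of $g^{y}_{m}(\eta):=\mathcal{E}^{(k)}_{m,\lambda,\eta}\!\big(f_{m}(\cdot,\eta)\big)(y)$, where $\mathcal{E}^{(k)}_{m,\lambda,\eta}$ is the extension operator over $\R^{k}$ associated with the frozen-$\eta$ graph $\bar\xi\mapsto\Sigma_{m}(\bar\xi,\eta)$. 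Plancherel (after enlarging the $z$-integral to all of $\R^{d-k}$) then yields the uniform bound $U_{m}(y)^{2}\lesssim\lambda^{-(d-k)}\int_{\eta}|g^{y}_{m}(\eta)|^{2}\,d\eta$, so in particular $U_{m}\lesssim\lambda^{-(d-k)/2}$.

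\textbf{The simultaneous saturation step.} I would apply Theorem \ref{thm:simsat}, in the form where the scalar codomain $\C$ of Definition \ref{def:system} is replaced by the Hilbert space $\mathcal{K}=L^{2}_{z}$, to the operators $R_{m}f_{m}(y)=\mathcal{E}_{m,\lambda}f_{m}(y,\cdot)$ with $X=\B_{1}\subset\R^{k}$ and $M=k$. These form a non-degenerate, uniformly bounded simultaneous system: the trivial bound is $B\simeq\lambda^{-(d-k)/2}$ (this is exactly the Plancherel bound above, recast as $\norm{R_{m}(p)(R_{m}(q))^{\star}}_{\mathcal{K}\to\mathcal{K}}\lesssim B^{2}$); the decay relation is $p\,\mathfrak{C}'_{m}\,q$ iff $|p-q|\le\lambda^{-1+\epsilon}$ or $(p-q)/|p-q|$ lies within $O(\tilde{\epsilon})$ of $\pm e_{m}$, obtained by running non-stationary phase on the $\mathcal{E}_{m,\lambda}\mathcal{E}_{m,\lambda}^{\star}$ kernel \emph{uniformly} over the $z,z'$ slices (forcing both endpoints into a common $y$-slice leaves only the $e_{m}$-directions surviving); and the non-degeneracy condition is precisely the fact that any $n$ of the transversal directions $e_{1},\dots,e_{k}$ span an $n$-dimensional subspace of $\R^{k}$, so no non-trivial cycle closes. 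Dyadically decomposing $\B_{1}^{(k)}=\bigcup_{i\ge0}\{y:\prod_{m}U_{m}(y)\simeq2^{-i}\lambda^{-k(d-k)/2}\}$, choosing a maximal $\lambda^{-1+\epsilon}$-separated set in each piece so that $N\gtrsim\lambda^{k(1-\epsilon)}|\tilde\Omega_{i}|$, and invoking the theorem gives $N^{1-\epsilon}\lesssim 2^{2(i+1)/(k-1)}$, hence $|\tilde\Omega_{i}|\lesssim\lambda^{-k+k\epsilon}2^{2(i+1)/((k-1)(1-\epsilon))}$. Summing the layer-cake with cutoff $2^{-I}\simeq\lambda^{-k(k-1)/2}$ and restoring the $\lambda^{-k(d-k)/2}$ prefactor produces $\norm{\prod_{m}U_{m}}_{L^{2/(k-1)}_{y}}\lesssim\lambda^{-\frac{k}{2}((d-k)+(k-1))+\delta}=\lambda^{-k(d-1)/2+\delta}$, which is \eqref{ssinterp}. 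The a priori control on $N$ needed to apply Theorem \ref{thm:simsat} is automatic here, since the decay is $\lambda^{-\infty}$ and $N\lesssim\lambda^{Ck}$.

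\textbf{A shortcut for $k\le3$.} When $2/(k-1)\ge1$ one can avoid the vector-valued statement entirely: after Step 1, Fubini in $\eta$ turns $\prod_{m}U_{m}(y)^{2}$ into $\lambda^{-k(d-k)}\int_{\vec\eta}\prod_{m}|g^{y}_{m}(\eta_{m})|^{2}\,d\vec\eta$; Minkowski's integral inequality swaps $L^{2/(k-1)}_{y}$ past $L^{2}_{\vec\eta}$; for each fixed $\vec\eta$ the frozen-$\eta$ surfaces are transversal with a constant uniform in $\vec\eta$ (their normals stay $O(\tilde{\epsilon})$-close to $e_{1},\dots,e_{k}$), so the $k$-linear transversal $L^{2}\to L^{2/(k-1)}$ estimate already established in this section (itself a simultaneous saturation argument, giving $\lambda^{-k(k-1)/2+\delta}$ over $\R^{k}$) applies; and integrating in $\vec\eta$ against the orthogonality identity $\int_{\vec\eta}\prod_{m}\norm{f_{m}(\cdot,\eta_{m})}_{L^{2}_{\bar\xi}}^{2}\,d\vec\eta=1$ closes the bound. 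For $k\ge4$ this Minkowski swap fails and one really must argue in the $\mathcal{K}$-valued setting of Step 2.

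\textbf{Where the difficulty is.} The one genuinely non-routine point is the passage from the scalar Theorem \ref{thm:simsat} to its $\mathcal{K}=L^{2}_{z}$-valued form. The trace argument of Section \ref{sec:simsat} is driven by the all-ones vector being a joint eigenvector of $\mathcal{A}_{ev}$, $\mathcal{A}_{odd}$ and $\mathcal{W}$ (Lemma \ref{lem:inv}), which in turn rests on the product $\prod_{m}T_{m}g^{j}_{m}(x^{j}_{m})$ being literally symmetric under the $S_{N}$-action. In the $\mathcal{K}$-valued setting that scalar product is replaced by a tensor $\bigotimes_{m}R_{m}f_{m}(y_{j})\in\mathcal{K}^{\otimes M}$, the permutations only permute tensor slots, and the test vector $\1_{\P}$ must be replaced by a $\mathcal{K}^{\otimes M}$-valued, $(S_{N})^{M}$-symmetric object assembled from the saturating directions at each point; one must check that this object still survives the symmetrisation so that Lemma \ref{lem:inv} and Propositions \ref{prop:lb}--\ref{prop:ub} go through verbatim (equivalently: that the positive operator $\mathcal{M}=\mathcal{A}\mathcal{W}$, now acting on $\ell^{2}((S_{N})^{M};\mathcal{K}^{\otimes M})$, still has a known large eigenvalue). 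Everything else — the uniform non-stationary-phase decay in the $z,z'$ variables, the uniform transversality of the frozen-$\eta$ surfaces, and the layer-cake bookkeeping — is a routine adaptation of the arguments already in Sections \ref{sec:simsat} and \ref{sec:ML}.
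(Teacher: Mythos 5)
Your Hölder step in $z$ matches the paper's reduction to $\norm{\prod_m U_m}_{L^{2/(k-1)}_y}$ with $U_m(y)=\norm{\mathcal{E}_{m,\lambda}f_m(y,\cdot)}_{L^2_z}$, and your final bookkeeping of $B$, $L$, the layer-cake cutoff $2^{-I}\simeq\lambda^{-k(k-1)/2}$, and the exponent $-k(d-1)/2$ all agree with the paper. The problem is in the step you yourself flag as "the one genuinely non-routine point." You correctly observe that a naive application of the scalar Theorem \ref{thm:simsat} does not apply because the inner $L^2_z$-structure makes the pointwise values $\mathcal{K}=L^2_z$-valued, and you propose to remedy this by proving a $\mathcal{K}$-valued version of Theorem \ref{thm:simsat} whose test vector is "assembled from the saturating directions at each point." But you never construct that object or verify that Lemma \ref{lem:inv} and Propositions \ref{prop:lb}--\ref{prop:ub} go through, and this is precisely where the argument cannot be left as a claim: nothing in Section \ref{sec:simsat} establishes a vector-valued version. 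The paper does not prove such a theorem. Instead it sidesteps the issue by a dualization trick: for each $y$ it writes $\norm{\mathcal{E}_{m,\lambda}f_m(y,\cdot)}_{L^2_z}=\sup_{\norm{\phi_{y,m}}_{L^2_z}=1}\left|\langle\phi_{y,m},\mathcal{E}_{m,\lambda}f_m(y,\cdot)\rangle_z\right|$ and, after the saturating points $p_1,\dots,p_N$ are chosen, selects near-optimal $\phi_{p_j,m}$ and defines the genuinely scalar-valued operators
\[
T_{m,\phi}f(y)=\int_z\int_{H_m}e^{i\lambda\langle x,\xi\rangle}\phi_{y,m}(z)f(\xi)\,d\mu_m(\xi),
\]
which map $L^2(H_m)\to\C$ and so fall squarely within Definition \ref{def:system}. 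The scalar theorem then applies as stated. This dualization is effectively the "test vector built from saturating directions" you gesture at, but the paper realises it inside the existing scalar framework rather than extending the framework; without this step your proposal has a genuine gap, and your "shortcut for $k\le3$" does not close it since it is restricted to small $k$ and still rests on unstated transversality-over-frozen-$\eta$ estimates.

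A secondary imprecision: your decay relation claims that $(p-q)/|p-q|$ must lie close to $\pm e_m$. Because the $z$ (and $z'$) variables in the $T_{m,\phi}(p)(T_{m,\phi}(q))^\star$ kernel are free (the $\phi$'s may be supported anywhere in the $z$-slab), the actual non-stationary phase constraint is that there exist $x(l)\in\SS_p$ and $x(r)\in\SS_q$ with $(x(l)-x(r))/|x(l)-x(r)|$ within $O(\tilde\epsilon)$ of $\pm\nu_m$; the resulting constraint on $p-q\in\R^k$ is proximity to a multiple of $\mathfrak{P}_{\{e_1,\dots,e_k\}}(\nu_m)$, not of $e_m$. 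Your parenthetical about "forcing both endpoints into a common $y$-slice" has it backwards: freeing $z,z'$ weakens rather than strengthens the constraint on $p-q$. The non-degeneracy then rests on the fact that the projected normals $\{\mathfrak{P}_{\{e_1,\dots,e_k\}}(\nu_m)\}$ span $\R^k$ (equivalent to transversality via the Gram--Schmidt set-up), which is exactly why the paper works with the slabs $\SS_p$ and the auxiliary relation $\tilde\CC_m$ rather than with directions $e_m$.
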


\begin{proof}
First fix $y$. Then the $L^{\frac{2}{k}}_{z}$ estimate can be reduced (via H\"{o}lder) to $k$ $L^{2}$ estimates,
i.e. for every $y$,
\begin{align*}\left(\int_{z}\left|\prod_{m=1}^{k}\mathcal{E}_{m,\lambda}f_{m}\Big|_{(y,z)}\right|^{\frac{2}{k}}dz\right)^{\frac{k}{2}}&\leq \prod_{m=1}^{k}\left(\int_{z}\left|\mathcal{E}_{m,\lambda}f_{m}\Big|_{(y,z)}\right|^{2}dz\right)^{\frac{1}{2}}\\
&=\prod_{m=1}^{k}\norm{\mathcal{E}_{m,\lambda}f_{m}(y,\cdot)}_{L^{2}_{z}}\\
&=\prod_{m=1}^{k}\sup_{\substack{\phi_{y,m}\in L^{2}_{z}\\ \norm{\phi_{y,m}}_{L^{2}_{z}}=1}}\left|\langle \phi_{y,m},\mathcal{E}_{m,\lambda}f_{m}(y,\cdot)\rangle_{z}\right|\end{align*}
Accordingly let $\phi_{y,m}\in L^{2}_{z}$ with $\norm{\phi_{y,m}}_{L^{2}_{z}}=1$
and define the operator $T_{m,\phi}:L^{2}(H_{1})\to  X^{\C}$
by
\[T_{m,\phi}f(y)=\int_{z}\int_{H_{m}} e^{i\lambda\langle x,\xi\rangle}\phi_{y,m}(z)f(\xi)d\mu_{m}(\xi)\]
where we have written $x=(y,z)$. We assume that $X\subset\R^{k}$ with a separation property. That is if $p,q\in X$ then $|p-q|\geq\lambda^{-1+\epsilon}$. For fixed $p$
\[\left|T_{m,\phi_{m,p}}(p)f\right|\leq \norm{\phi_{p,m}}_{L^{2}}\norm{\mathcal{E}_{m,\lambda}f\Big|_{(p,\cdot)}}_{L^{2}_{z}}\] and adopting the parametrisation for $H_{m}$
\[\norm{\mathcal{E}_{m,\lambda}f\Big|_{(p,\cdot)}}_{L^{2}_{z}}^{2}=\int e^{i\lambda\langle z,\eta-\tilde{\eta}\rangle}f(\eta)\overline{g(\tilde{\eta})}dz d\eta d\tilde{\eta}\]
Clearly there are no critical points, $z$, for the phase function unless $\eta=\tilde{\eta}$. Therefore integration by parts combined with Shur's test yield
\[\norm{\mathcal{E}_{m,\lambda}f\Big|_{(p,\cdot)}}_{L^{2}_{z}}^{2}\leq \lambda^{-(d-k)}\norm{f}_{L^{2}(H_{m})}^{2}.\]
So
\begin{equation}\left|T_{m,\phi_{p,m}}(p)f\right|\leq \lambda^{-\frac{d-k}{2}}.\label{kuniformbound}\end{equation}
Therefore for any $p,q$
\[\left|T_{m,\phi_{p,m}}(p)\left(T_{m,\phi_{q,m}}(q)\right)^{\star}\right|\leq \lambda^{-(d-k)}.\]
That is, in this case $B=\lambda^{-(d-k)}$. Now let's see the decay conditions. We compute
\begin{multline*}T_{m,\phi_{p,m}}(p)\left(T_{m,\phi_{p,m}}(q)\right)^{\star}=\int_{H_{m}} e^{i\lambda \left(\langle \bar{p}-\bar{q},\bar{\xi}\rangle +\langle z-w,\eta\rangle+(p_{m}-q_{m})\Sigma_{m}(\bar{\xi},\eta)\right)}\\
\times b(\bar{\xi},\eta)\phi_{p,m}(z)\phi_{q,m}(w)d\bar{\xi}d\eta dzdw\end{multline*}
 We have already seen that the integral in $(\bar{\xi},\eta)$ experiences rapid decay if $(p-q,z-w)$ does not point in the direction of the normal of $H_{m}$. To set up our conditions we introduce the notation $\SS_{p}$,
\[\SS_{p}=\{(p,z) \mid z\in \R^{d-k}\}\]
That is $\SS_{p}$ is the translation of the $y=0$ subspace to $p$. If there are no points $x(l),x(r)$ so that $x(l)\in \SS_{p}$ and $x(r)\in\SS_{q}$ with
\[\left|\frac{x(l)-x(r)}{|x(l)-x(r)|}-(\pm \nu_{m})\right|\leq 2\tilde{\epsilon}\]
then we can be sure that $|T_{m,\phi_{p,m}}(p)\left(T_{m,\phi_{p,m}}(q)\right)^{\star}|$ experiences rapid decay of rate $O(\lambda^{-\infty})$. Therefore our condition on $p$ and $q$ becomes
\[\CC_{m}=\left\{(p,q)\in X^{2}\middle|\substack{p=q\text{ or }\\ \exists x(l)\in \SS_{p}, x(r)\in\SS_{q}\\
\left|\frac{x(l)-x(r)}{|x(l)-x(r)|}-(\pm \nu_{m})\right|\leq 2\tilde{\epsilon}}\right\}.\]
Now consider the diagram 
\[\begin{tikzcd}
	& {p_{2}} && { p_{3}} \\
	{ p_{1}} &&&& { p_{4}} \\
	& {p_{n}} & \cdots & {}
	\arrow["{\mathfrak{C}_{m_{2}}}", tail reversed, from=1-2, to=1-4]
	\arrow["{\mathfrak{C}_{3}}", tail reversed, from=1-4, to=2-5]
	\arrow["{\mathfrak{C}_{m_{1}}}", tail reversed, from=2-1, to=1-2]
	\arrow["{\mathfrak{C}_{4}}", tail reversed, from=2-5, to=3-3]
	\arrow["{\mathfrak{C}_{n}}", tail reversed, from=3-2, to=2-1]
	\arrow["{\mathfrak{C}_{n-1}}", tail reversed, from=3-3, to=3-2]
\end{tikzcd}\]
and assume that it holds but the $p_{i}$ are all distinct. Therefore for each $m_{i}$ there is an $x_{i}(l)\in \SS_{p_{m_{i}}}$ and $x_{i}(r)\in \SS_{p_{m_{i+1}}}$ such that
\[\left|\frac{x_{i}(l)-x_{i}(r)}{|x_{i}(l)-x_{i}(r)|}-(\pm\nu_{m})\right|\leq \tilde{\epsilon}.\]
On the other hand $x_{i}(r)$ and $x_{i+1}(l)$ both lie in $\SS_{p_{m_{i+1}}}$. So we can create a new diagram involving only the $x_{i}(r)$
\[\begin{tikzcd}
	& {x_{2}(r)} && { x_{3}(r)} \\
	{ x_{1}(r)} &&&& { x_{4}(r)} \\
	& {x_{n}(r)} & \cdots & {}
	\arrow["{\tilde{\mathfrak{C}}_{m_{2}}}", tail reversed, from=1-2, to=1-4]
	\arrow["{\tilde{\mathfrak{C}}_{3}}", tail reversed, from=1-4, to=2-5]
	\arrow["{\tilde{\mathfrak{C}}_{m_{1}}}", tail reversed, from=2-1, to=1-2]
	\arrow["{\tilde{\mathfrak{C}}_{4}}", tail reversed, from=2-5, to=3-3]
	\arrow["{\tilde{\mathfrak{C}}_{n}}", tail reversed, from=3-2, to=2-1]
	\arrow["{\tilde{\mathfrak{C}}_{n-1}}", tail reversed, from=3-3, to=3-2]
\end{tikzcd}\]
where
\[\tilde{\CC}_{m}=\left\{(x_{1},x_{2})\in (X\times\R^{d-k})^2\middle| \substack{x_{1}=x_{2}\text{ or }\\x_{1}-x_{2}=v+w\\
\left|\frac{v}{|v|}-(\pm \nu_{m})\right|\leq \tilde{\epsilon}\\
\langle w,e_{l}\rangle =0 \;l=1,\dots,k}\right\} \]

which if the $x_{i}(r)$ are distinct would require
\[\left|\mathfrak{P}_{\{e_{1},\dots,e_{k}\}}(\nu_{1})\wedge\cdots\wedge\mathfrak{P}_{\{e_{1},\dots,e_{k}\}}(\nu_{k})\right|\leq C\tilde{\epsilon}.\]
For a suitable choice of $\tilde{\epsilon}$ this contradicts the transversality condition
\[\left|\mathfrak{P}_{\{e_{1},\dots,e_{k}\}}(\nu_{1})\wedge\cdots\wedge\mathfrak{P}_{\{e_{1},\dots,e_{k}\}}(\nu_{k})\right|\geq \tilde{c}.\]
If any pairs of points are the same we can (as in the $k=d$ case) contract the loop and make the same argument. So all the points in the loop must be the same. So we do indeed have a uniformly bounded simultaneous system. All that is required is to locate the points. We need to estimate
\[\left(\int_{y}\left(\prod_{m=1}^{k}\norm{\mathcal{E}_{m,\lambda}f_{m}\Big|_{(y,\cdot)}}_{L^{2}_{z}}\right)^{\frac{2}{k-1}}dy\right)^{\frac{k-1}{2}}\]
for $\norm{f_{m}}_{L^{2}(H_{m})}=1$ (for convenience we will henceforth assume that we are so normalised) The bound \eqref{kuniformbound} ensures that for any $y$
\[\prod_{m=1}^{k}\norm{\mathcal{E}_{m,\lambda}f_{m}\Big|_{(y,\cdot)}}_{L^{2}_{z}}\leq \lambda^{-\frac{k(d-k)}{2}}.\]
As in the $k=d$ case set
\[\Omega_{i}=\left\{y\in\R^{k}\middle| 2^{-i-1}\lambda^{-\frac{k(d-k)}{2}}\leq\prod_{m=1}^{k}\norm{\mathcal{E}_{m,\lambda}f_{m}\Big|_{(y,\cdot)}}_{L^{2}_{z}}\leq 2^{-i}\lambda^{-\frac{k(d-k)}{2}}\right\}\]
where $i=0,\dots, I$ so that $2^{-I}=\lambda^{-\frac{k(k-1)}{2}}$. For $\mathbb{B}_{1}\setminus \bigcup_{i=1}^{I}\Omega_{i}$ the estimate \eqref{ssinterp} is trivial. We the find the points $p_{1},\dots,p_{N}$ in the same fashion as the $k=d$ case, so that each $p_{j}\in\Omega_{i}$ and distinct points are at least $\lambda^{-1+\epsilon}$ apart. Again this gives us
\[N\geq{}\frac{|\Omega_{i}|}{\lambda^{-k+k\epsilon}}\]
Having found the points we can then locate $\phi_{p,m}$ so that
\[\left|\prod_{m=1}^{k}T_{m,\phi_{p,m}}f_{j}\Big|_{p_{j}}\right|\geq 2^{-i-1}\lambda^{-\frac{k(d-k)}{2}}\]
so $L=2^{-\frac{i+1}{k}}\lambda^{-\frac{d-k}{2}}$. Recall that $B=\lambda^{-\frac{d-k}{2}}$. So applying Theorem \ref{thm:simsat} we obtain
\[|\Omega_{i}|\leq N\lambda^{-k+k\epsilon}\leq C2^{2(i+1)k}{(k-1)(1-\epsilon)}\lambda^{-k+k\epsilon}\]
which for a suitable choice of $\epsilon$ gives
\[|\Omega_{i}|\leq C2^{\frac{2ki}{k-1}}\lambda^{-k+\frac{2\delta}{k-1}}\]
giving \eqref{ssinterp}.
\end{proof}

We now consider the opposite end of the interpolation scale. At this point, H\"{o}lder’s inequality reduces the argument to the analysis of $k$ linear-type estimates. Our approach follows the general framework of the Tomas–Stein, \cite{T79} and \cite{S}, $L^{2}\to L^{p}$ restriction theorem, adapted here to a mixed-norm setting. The central observation, originating in that work, is that one of the spatial variables may be regarded as a time parameter, permitting the formulation of an associated Strichartz estimate. Subsequent developments have refined these ideas both within the $L^{2}\to L^{p}$ restriction/extension theory and in the closely related field of $L^{p}$ concentration phenomena for eigenfunctions.

Before proceeding to the proof, we outline the general structure of the argument. Although the material is not new, it is included here for completeness. The abstract Strichartz formulation follows Keel–Tao \cite{KT98}, while the parameter-dependent version was developed by Koch–Tataru–Zworski \cite{KTZ07} for unitary operators and by Tacy \cite{T10} for the non-unitary case. Suppose that $T(t)$ is a one parameter family of operators $T(t):L^{2}\to L^{p}$. 
The operator $\norm{T}_{L^{p}}$ is viewed as an $L^{p}_{t}L^{p}_{x}$ Strichartz estimate. Suppose that the following bounds hold:
\begin{equation}
\norm{T(t)T^{\star}(s)}_{L^{1}\to L^{\infty}}
\leq \lambda^{-\mu_{\infty}}\bigl(\lambda^{-1}+|t-s|\bigr)^{-\sigma_{\infty}},
\label{Sinftest}
\end{equation}
and
\begin{equation}
\norm{T(t)T^{\star}(s)}_{L^{2}\to L^{2}}
\leq \lambda^{-\mu_{2}}\bigl(\lambda^{-1}+|t-s|\bigr)^{-\sigma_{2}}.
\label{S2est}
\end{equation}
Then since 
\[
\norm{T(\cdot)}_{L^{q}_{t}L^{p}_{x}}^{2}
=\sup_{\norm{F}=\norm{G}=1}
\left|\iint \langle T^{\star}(s)F(s),T^{\star}(t)G(t)\rangle,ds,dt\right|
\]
we can control $\norm{T(\cdot)}_{L^{q}_{t}L^{p}_{x}}$ by controlling the bilinear form. Interpolating between the bilinear form estimates corresponding to \eqref{Sinftest} and \eqref{S2est} gives
\[
\left|\langle T^{\star}(s)F(s),T^{\star}(t)G(t)\rangle\right|
\leq \lambda^{-\mu_{p}}\left(\lambda^{-1}+|t-s|\right)^{-\sigma_{p}}
\norm{F}_{p'}\norm{G}_{p'}.
\]
The $(t,s)$-integration is then resolved by Young’s inequality when
\[\frac{q}{2}\sigma_{p}\neq -1,\]
or by the Hardy–Littlewood–Sobolev inequality when
\[\frac{q}{2}\sigma_{p}=-1,\qquad q\neq 2.\]
If $q=2$ and $\sigma_{p}=-1$, a local-in-time estimate with logarithmic loss remains available. The quantities in \eqref{Sinftest} and \eqref{S2est} therefore determine the entire numerology of the argument, and the sharp bounds are typically achieved in the critical case $\frac{q}{2}\sigma_{p}=-1$.

\begin{prop}\label{prop:TSbound2}
Suppose that for each $m$ the hypersurface $\zeta=\Sigma_{m}(0,\eta)$ sitting in $\R^{d-k+1}$ is curved. Then
\begin{equation}
\norm{\prod_{m=1}^{k}\mathcal{E}_{m}f_{m}}_{L^{\frac{2}{k}}_{y}L^{\frac{2(d-k)}{k(d-k-2)}}_{z}}\lesssim \lambda^{-\frac{k(d-2)}{2}+3\epsilon}\prod_{m=1}^{k}\norm{f_{m}}_{L^{2}(H_{m})}\quad k=1,\dots,d-2.\label{curv2}\end{equation}

\end{prop}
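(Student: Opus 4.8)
This is a mixed-norm Tomas--Stein estimate and I would prove it in the mould sketched above; note that only curvature is used here, transversality having been spent already in Proposition~\ref{prop:klintrans}. Since the left-hand side is a product, H\"older does the bookkeeping: for fixed $y$, $\big\|\prod_{m}\mathcal{E}_{m}f_{m}(y,\cdot)\big\|_{L^{2(d-k)/(k(d-k-2))}_{z}}\le\prod_{m}\big\|\mathcal{E}_{m}f_{m}(y,\cdot)\big\|_{L^{p}_{z}}$ with $p=\tfrac{2(d-k)}{d-k-2}$, and then H\"older in $y$ with exponent $2$ in each slot gives $\big\|\prod_{m}\mathcal{E}_{m}f_{m}\big\|_{L^{2/k}_{y}L^{2(d-k)/(k(d-k-2))}_{z}}\le\prod_{m}\big\|\mathcal{E}_{m}f_{m}\big\|_{L^{2}_{y}L^{p}_{z}}$. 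So it suffices to prove the single linear estimate $\|\mathcal{E}_{m}f_{m}\|_{L^{2}_{y}L^{p}_{z}}\lesssim\lambda^{-\nu+\epsilon}\|f_{m}\|_{L^{2}(H_{m})}$ for each $m$, with $\nu$ the per-factor exponent required by \eqref{curv2}; after a harmless finite decomposition of $H_{m}$ into small graph patches I may take $H_{m}=\{\xi_{m}=\Sigma_{m}(\bar\xi,\eta)\}$ over a small neighbourhood of the origin in $\R^{k-1}_{\bar\xi}\times\R^{d-k}_{\eta}$.

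For this linear estimate I single out $x_{m}$ as a time variable $t$ and treat the remaining $k-1$ components $\bar y$ of $y$ as parameters. Put $h^{\bar y}(\bar\xi,\eta)=e^{i\lambda\langle\bar y,\bar\xi\rangle}b(\bar\xi,\eta)f_{m}(\bar\xi,\eta)$, so $\|h^{\bar y}\|_{L^{2}}\sim\|f_{m}\|_{L^{2}(H_{m})}$ uniformly in $\bar y$, and $U(t)h(z)=\int e^{i\lambda(\langle z,\eta\rangle+t\Sigma_{m}(\bar\xi,\eta))}h(\bar\xi,\eta)\,d\bar\xi\,d\eta$; then $\mathcal{E}_{m}f_{m}(\bar y,t,z)=U(t)h^{\bar y}(z)$. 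The kernel of $U(t)U(s)^{\star}$ is $\int e^{i\lambda(\langle z-z',\eta\rangle+(t-s)\Sigma_{m}(\bar\xi,\eta))}|b|^{2}\,d\bar\xi\,d\eta$. Because the frozen surface $\zeta=\Sigma_{m}(0,\eta)$ is curved, $\det\nabla^{2}_{\eta}\Sigma_{m}(0,\cdot)\ne 0$, and by continuity this persists on the support of $b$ once that support is taken small enough; stationary phase in the $d-k$ variables $\eta$ (the compact $\bar\xi$-integral contributing only an $O(1)$ factor) then gives the dispersive bound \eqref{Sinftest} with $\sigma_{\infty}=\tfrac{d-k}{2}$, while Plancherel in $z\leftrightarrow\lambda\eta$ together with Cauchy--Schwarz in $\bar\xi$ gives \eqref{S2est} with $\sigma_{2}=0$. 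Interpolating these at $p=\tfrac{2(d-k)}{d-k-2}$, for which $\tfrac1{p'}=\tfrac12+\tfrac1{d-k}$ so that the $L^{1}\to L^{\infty}$ endpoint enters with weight $\tfrac{2}{d-k}$, produces a bilinear-form bound for $U(t)U(s)^{\star}$ with decay exponent $\sigma_{p}=\tfrac{2}{d-k}\cdot\tfrac{d-k}{2}=1$ and a $\lambda$-power $\lambda^{-\mu_{p}}$ given by the same convex combination of $\mu_{\infty},\mu_{2}$; the value of $p$ is forced precisely by wanting $\sigma_{p}=1$ to pair with the outer exponent $q=2$ coming from $L^{2}_{x_{m}}$.

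From here the abstract scheme of Keel--Tao / Koch--Tataru--Zworski / Tacy applies verbatim: $\|U(\cdot)h^{\bar y}\|_{L^{2}_{t}L^{p}_{z}}\le\|h^{\bar y}\|_{L^{2}}\sup_{G}\big\|\int U(t)^{\star}G(t)\,dt\big\|_{L^{2}}$, and expanding the square into the bilinear form and inserting the interpolated kernel bound reduces everything to $\iint(\lambda^{-1}+|t-s|)^{-1}\|G(t)\|_{p'}\|G(s)\|_{p'}\,dt\,ds$. This is exactly the critical case $\tfrac q2\sigma_{p}=1$; since $x_{m}$ ranges over a bounded interval, Young's inequality in $t$ bounds the double integral by $(\log\lambda)\|G\|_{L^{2}_{t}L^{p'}_{z}}^{2}$, so $\|U(\cdot)h^{\bar y}\|_{L^{2}_{t}L^{p}_{z}}\lesssim\lambda^{-\mu_{p}/2}(\log\lambda)^{1/2}\|f_{m}\|_{L^{2}(H_{m})}$ uniformly in $\bar y$. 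Integrating in $\bar y$ over its bounded range, multiplying the $k$ factors, and absorbing the logarithms (and the auxiliary $\epsilon$'s) into $\lambda^{3\epsilon}$ then yields \eqref{curv2}.

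The main obstacle, and the one place the hypothesis is genuinely used, is the transfer of curvature from the frozen surface $\Sigma_{m}(0,\cdot)$ to a uniform lower bound on $|\det\nabla^{2}_{\eta}\Sigma_{m}(\bar\xi,\cdot)|$ over the whole frequency support --- this is what dictates the preliminary localisation of $b$, and must be established (via continuity of the shape operator and compactness) before any stationary phase is run. The only other point worth flagging is that $q=2$, $\sigma_{p}=1$ is exactly the borderline at which neither Young's inequality nor Hardy--Littlewood--Sobolev closes without loss, so the $\log\lambda$ --- hence the $\lambda^{\epsilon}$ --- is intrinsic rather than an artefact of the method, which is why the proposition is stated with a loss.
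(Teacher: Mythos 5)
Your reduction to a per-factor linear bound via H\"older is correct, and your Strichartz numerology for the frozen surface is right: $\mu_{\infty}=\sigma_{\infty}=\tfrac{d-k}{2}$, $\mu_{2}=d-k$, $\sigma_{2}=0$, giving $\mu_{p}=d-k-1$, $\sigma_{p}=1$ at $p=\tfrac{2(d-k)}{d-k-2}$. But this lands you on $\norm{U(\cdot)h^{\bar y}}_{L^{2}_{t}L^{p}_{z}}\lesssim\lambda^{-\frac{d-k-1}{2}}(\log\lambda)^{1/2}\norm{f_{m}}_{L^{2}}$ \emph{uniformly} in $\bar y$, and the step ``integrating in $\bar y$ over its bounded range'' only contributes an $O(1)$ factor. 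The per-factor bound you obtain is therefore $\lambda^{-\frac{d-k-1}{2}}$, whereas \eqref{lcurve2} requires $\lambda^{-\frac{d-2}{2}}$: you are short by $\lambda^{-\frac{k-1}{2}}$, and this gap is genuine whenever $k\geq 2$.

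The missing factor is exactly the Plancherel gain in the $\bar y$ variables. The phase carries $e^{i\lambda\langle\bar y,\bar\xi\rangle}$ with $\bar\xi\in\R^{k-1}$, so the $L^{2}_{\bar y}$ integral is a (rescaled) Fourier transform in $\bar\xi$ and is worth $\lambda^{-\frac{k-1}{2}}$, not $O(1)$. Folding $e^{i\lambda\langle\bar y,\bar\xi\rangle}$ into the data as $h^{\bar y}$ and quoting $\norm{h^{\bar y}}_{L^{2}}\sim\norm{f_{m}}_{L^{2}}$ discards precisely this orthogonality. The difficulty with simply ``doing Plancherel in $\bar y$ first'' is that the time phase $t\Sigma_{m}(\bar\xi,\eta)$ couples $\bar\xi$ with $t$, and the inner norm is $L^{p}_{z}$ with $p>2$, so neither Plancherel-then-Strichartz nor Strichartz-then-Plancherel factors cleanly. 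That coupling is what the paper's proof is built to undo: it decomposes $\mathcal{E}_{m,\lambda}=\sum_{r}T_{r}$ according to $|\nabla_{\bar\xi}\Sigma_{m}|$, inserts integrated $(\tau,\rho)$ variables by a stationary-phase resolution of the identity, and thereby writes each $T_{r}f=I_{r}F(\bar y,\cdot)$ where $I_{r}$ is $\bar y$-independent and satisfies exactly your Strichartz bound (cf.\ \eqref{innernormbnds}), while $F$ contains the entire $\bar\xi\leftrightarrow\bar y$ oscillation and is estimated by direct $L^{2}$ computation with Schur's test (cf.\ \eqref{Fbounds}), producing the $\lambda^{-\frac{k-1}{2}}$ you are missing. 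Multiplying the two factors recovers $\lambda^{-\frac{d-2}{2}}$. You would need some analogue of this decoupling (or another way to extract the $\bar\xi$-orthogonality across the $L^{p}_{z}$ inner norm) to close the argument.
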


\begin{proof}
 First we use H\"{o}lder's inequality to see that 
 \[\norm{\prod_{m=1}^{k}\mathcal{E}_{m,\lambda}f_{m}}_{L^{\frac{2}{k}}_{y}L^{\frac{2(d-k)}{k(d-k-2)}}_{z}}\leq \prod_{m=1}^{k}\norm{\mathcal{E}_{m}f}_{L_{y}^{2}L_{z}^{\frac{2(d-k)}{d-k-2}}}\]
 so it would be sufficient to prove that for each $m$
\begin{equation}
 \norm{\mathcal{E}_{m,\lambda}f}_{L_{y}^{2}L_{z}^{\frac{2(d-k)}{d-k-2}}}\lesssim \lambda^{-\frac{d-2}{2}}\norm{f_{m}}_{L^{2}}.\label{lcurve2}\end{equation}
 We will establish this for $m=1$, all other cases can be converted to this one by a rotation of the coordinate system. We use the parametrisation $\xi_{1}=\Sigma_{1}(\bar{\xi},\eta)$ where $\xi=(\xi_{1},\bar{\xi},\eta)$ and $x=(t,\bar{y},z)$. In this notation
\[\mathcal{E}_{1,\lambda}f=\int e^{i\lambda\left(\langle \bar{y},\bar{\xi}+\langle z,\eta\rangle+t\Sigma_{1}(\bar{\xi},\eta)\right)}b(\bar{\xi},\eta)f(\bar{\xi},\eta)d\bar{\xi}d\eta.\]
First we will decompose in the size of $\nabla_{\bar{\xi}}\Sigma_{1}$, this scale tells us how flat the surface $\zeta=\Sigma_{1}(\bar{\xi},\eta)$ is with respect to the $\bar{\xi}$ variables.
We write
\[\mathcal{E}_{1,\lambda}=T_{0}+\sum_{r=1}^{R}T_{r}\]
where
\[T_{0}=\int e^{i\lambda\left(\langle \bar{y},\bar{\xi}\rangle+\langle z,\eta\rangle+t\Sigma_{1}(\bar{\xi},\eta)\right)}b(\bar{\xi},\eta)\chi_{0}(\nabla_{\bar{\xi}}\Sigma_{1}(\bar{\xi},\eta))f(\bar{\xi},\eta)d\bar{\xi}d\eta,\]
\[T_{r}=\int e^{i\lambda\left(\langle \bar{y},\bar{\xi}\rangle+\langle z,\eta\rangle+t\Sigma_{1}(\bar{\xi},\eta)\right)}b(\bar{\xi},\eta)\chi_{r}(\nabla_{\bar{\xi}}\Sigma_{1}(\bar{\xi},\eta))f(\bar{\xi},\eta)d\bar{\xi}d\eta,\]
with $\chi_{0}$  supported in the ball $\B_{\lambda^{-1+2\epsilon}}(0)$, $\chi_{r}$ is supported in the annulus $A_{[\lambda^{-1+r\epsilon};\lambda^{-1+(r+2)\epsilon}]}$, 
\[|D^{\beta}_{\bar{\zeta}}\chi_{r}(\bar{\zeta)}|\leq \lambda^{(1-r\epsilon)|\beta|}\]
and
\[1=\sum_{r=0}^{R}\chi_{r}(\bar{\zeta}).\]
Note that since we are working on a region of compact support (in $\bar{\xi}$) the sum is finite and $R$ is independent of $\lambda$ ($R\approx \frac{1}{\epsilon}$). Let us first treat the cases $r\geq{}1$. Here there is a lower bound for $\nabla_{\bar{\xi}}\Sigma_{1}$. Therefore there is some $i$ (which we will assume to be $2$) so that,
\[|\partial_{\bar{\xi}_{2}}\Sigma_{1}(\bar{\xi},\eta)|\geq{}c\lambda^{-1+\epsilon r}.\]
We make the change of variables
\[\Sigma_{1}(\bar{\xi},\eta)-\Sigma_{1}(0,\eta)\mapsto \lambda^{-1+\epsilon r}\bar{\xi}_{2}\]
to obtain
\[T_{r}=\int e^{i\lambda\left(\langle \tilde{y},\tilde{\xi}\rangle+y_{2}\psi(\bar{\xi},\eta)+\langle z,\eta\rangle+t\Sigma_{1}(0,\eta)+\lambda^{-1+\epsilon r}t\bar{\xi_{2}}\right)}
\tilde{b}(\bar{\xi},\eta)f(\psi(\bar{\xi},\eta),\eta)d\bar{\xi}d\eta\]
where $|\partial_{\bar{\xi}_{2}}\psi(\bar{\xi},\eta)|\geq{}c>0$. To enable us to factorise $T_{r}$ and treat the inner and outer $L^{p}$ norms separately we use a semiclassical resolution of the identity to introduce integrated variables $(\rho,\tau)$. Later we will tune the $(\tau,\rho)$ localisation dependent on $r$.  We claim that
\begin{multline}T_{r}=\frac{\lambda^{\epsilon r}}{2\pi}\int e^{i\lambda\left(\langle \tilde{y},\tilde{\xi}\rangle+y_{2}\psi(\bar{\xi},\eta)+\langle z,\eta\rangle+t\Sigma_{1}(0,\eta)\right)+i\lambda^{\epsilon r}\left(t\rho-\tau(\rho-\bar{\xi}_{2})\right)}\\
\times\tilde{b}(\bar{\xi},\eta)a(\tau)a(\rho)f(\psi(\bar{\xi},\eta),\eta)d\bar{\xi}d\eta d\tau d\rho\label{integratedvariables}\end{multline}
for some $a$ smooth and equal to one in $[-1,1]$ supported in $[-2,2]$. To see \eqref{integratedvariables} holds perform the $(\tau,\rho)$ integral via the method of stationary phase. There is a single non-degenerate critical point at
\[\rho=\bar{\xi}_{2}\quad t=\tau\]
and so \eqref{integratedvariables} follows from the stationary phase formula, with parameter $\lambda^{\epsilon r}$ in dimension $2$. 
If $|t-\tau|\geq \lambda^{-\epsilon (r-1)}$ the phase is non-stationary in $\rho$ and we may integrate by parts to obtain a $O(\lambda^{-\infty})$ contribution we we write
\begin{multline*}T_{r}f=\frac{\lambda^{\epsilon r}}{2\pi}\int e^{i\lambda\left(\langle \tilde{y},\tilde{\xi}\rangle+y_{2}\psi(\bar{\xi},\eta)+\langle z,\eta\rangle+t\Sigma_{1}(0,\eta)\right)+\lambda^{\epsilon r}\left(t\rho-\tau(\rho-\bar{\xi}_{2})\right)}\\
\times \tilde{b}(\bar{\xi},\eta)a(\tau)a(\rho)a(\lambda^{\epsilon (r-1)}(t-\tau))\\
\times f(\psi(\bar{\xi},\eta),\eta)d\bar{\xi}d\eta d\tau d\rho+O(\lambda^{-\infty}).\end{multline*}
We re-write this as
\[T_{r}f=I_{r}F(\bar{y},\cdot)\]
where
\[I_{r}g=\frac{\lambda^{\frac{\epsilon r}{2}}}{2\pi}\int e^{i\lambda\left(\langle z,\eta\rangle+t\Sigma_{1}(0,\eta)\right)+\lambda^{\epsilon r}t\rho)}
a(\tau)a(\lambda^{\epsilon (r-1)}(t-\tau))a(\rho)g(\tau,\rho,\eta)d\eta d\tau d\rho\]
and
\begin{multline*}
    F(\bar{y},\rho,\tau,\eta)=\lambda^{\frac{\epsilon r}{2}}\int e^{i\lambda\left(\langle \tilde{y},\tilde{\xi}\rangle+\bar{y}_{2}\psi(\bar{\xi}_{2},\tilde{\xi},\eta)\right)+\lambda^{\epsilon r}\tau(\rho-\bar{\xi}_{2})}\\
    \times \tilde{b}(\bar{\xi}_{2},\tilde{\xi},\eta)a\left(\frac{\rho}{2}\right)f(\psi(\bar{\xi_{2}},\tilde{\xi},\eta),\tilde{\xi},\eta)d\bar{\xi}_{2}d\tilde{\xi}.\end{multline*}
Since $I_{r}$ is local, in the sense that its kernel is zero if $|t-\tau|\geq{}2\lambda^{-\epsilon (r-1)}$ we may work locally on intervals of size $2\lambda^{-\epsilon (r-1)}$ in $t$ and $\tau$. So if we could show that
\begin{equation}\norm{I_{r}}_{L^{2}_{\rho}L^{2}_{\tau}([0,2\lambda^{-\epsilon (r-1)}])L^{2}_{\eta}\to L^{2}_{t}([0,2\lambda^{-\epsilon (r-1)}])L^{\frac{2(d-k)}{d-k-2}}_{z}}\lesssim \lambda^{-\frac{d-k-1}{2}+\frac{\epsilon}{2}}\log(\lambda)\label{innernormbnds}\end{equation}
and
\begin{equation}\norm{F}_{L^{2}_{\bar{y}}L^{2}_{\rho}L_{\tau}^{2}([0,2\lambda^{-\epsilon r}])L^{2}_{\eta}}\lesssim \lambda^{-\frac{k-1}{2}+\frac{\epsilon}{2}}\norm{f}_{L^{2}_{\bar{\xi}}L^{2}_{\eta}}\label{Fbounds}\end{equation}
we could conclude that
\[\norm{T_{r}}_{L^{2}_{\bar{\xi}}L^{2}_{\eta}\to L^{2}_{\bar{y}}L^{2}_{t}L^{\frac{2(d-k)}{d-k-2}}}\lesssim \lambda^{-\frac{d-2}{2}+2\epsilon}\]
The bound for $F$ can be computed directly.
\begin{multline*}\norm{F}_{L^{2}_{\bar{y}}L^{2}_{\rho}L_{\tau}^{2}([0,2\lambda^{-\epsilon r}])L^{2}_{\eta}}^{2}=\lambda^{\epsilon r}\int e^{i\lambda \left(\langle \tilde{y},\tilde{\xi}-\tilde{\zeta}\rangle+\bar{y}_{2}(\psi(\bar{\xi}_{2},\tilde{\xi},\eta)-\psi(\bar{\zeta}_{2},\tilde{\zeta},\eta))\right)+\lambda^{\epsilon r}\tau(\bar{\xi}_{2}-\bar{\zeta}_{2})}\\
\times B(\bar{\xi}_{2},\bar{\zeta}_{2},\tilde{\xi},\eta)a(\lambda^{\epsilon (r-1)}\tau)a\left(\frac{\rho}{2}\right)f(\psi(\bar{\xi_{2}},\tilde{\xi},\eta),\tilde{\xi},\eta)\\
\times \overline{f(\psi(\bar{\zeta_{2}},\tilde{\zeta},\eta),\tilde{\zeta},\eta)}d\bar{\xi}_{2}d\bar{\zeta_{2}}d\bar{y}_{2}d\tilde{y}d\rho d\tilde{\zeta} d\tilde{\xi} d\eta.\end{multline*}
Examining the critical points in $\tilde{y}$ we see that these occur only on the diagonal $\tilde{\xi}=\tilde{\zeta}$. In $\bar{y}_{2}$
the phase is only critical when 
\[\psi(\bar{\xi}_{2},\tilde{\xi},\eta)-\psi(\bar{\zeta}_{2},\tilde{\zeta},\eta)=0.\]
On the support of $B$ recall that $|\partial_{\bar{\xi}_{2}}\psi|\geq{c}$ so when $|\bar{\xi}-\bar{\zeta}|\geq{}\lambda^{-1}$ we can integrate by parts for rapid decay. Therefore and application of Schur's test gives \eqref{Fbounds}. It then only remains to prove the mapping norms for $I_{r}$. If we freeze the $\tau$ variable  and define
\[I_{r}(\tau)g=\int e^{i\lambda\left(\langle z,\eta\rangle+t\Sigma_{1}(0,\eta)\right)+t\lambda^{\epsilon r}\rho}
a(\tau)a(\lambda^{\epsilon (r-1)}(t-\tau))a(\rho)g(\tau,\rho,\eta)d\eta  d\rho\]
it is sufficient to show that 
\[\norm{I_{r}(\tau)}_{L^{2}_{\eta}\to L^{2}_{t}([0,\lambda^{-\epsilon r}])L^{p}_{z}}\lesssim \lambda^{-\frac{d-k-1}{2}}\log(\lambda).\]
So we proceed to treat $I_{\lambda}(\tau)$ with via a classical $(2,p)$ Strichartz estimate. Since it is associated with a restriction extension operator (for the surface $\zeta=\Sigma_{1}(0,\eta)$ in dimension $d-k+1$ for any $t$,
\[\norm{I_{r}(\tau,t)}_{L^{2}_{\eta}\to L^{2}_{z}}\lesssim \lambda^{-\frac{d-k}{2}}\]
so
\[\norm{I_{r}(\tau,t)I^{r}_{r}(\tau,s)}_{L^{2}_{w}\to L^{2}_{z}}\lesssim \lambda^{-(d-k)}.\]
To obtain the $L^{\infty}$ estimate we write
\[I_{r}(\tau,t)I^{\star}_{r}(\tau,s)g=\int e^{i\lambda\left(\langle z-w,\eta\rangle+(t-s)\Sigma_{1}(0,\eta)\right)+\lambda^{\epsilon r}(t-s)\rho}a^{2}(\rho)g(w)dwd\eta d\rho.\]
 We can compute the $\eta$ integral via the method of stationary phase. There is a critical point when
\begin{equation}
z-w=-(t-s)\nabla_{\eta}\Sigma_{1}(0,\eta),\label{etacrit}\end{equation}
so if $|z-w|\geq{}C|t-s|$ the phase is non-stationary and integration by parts ensures rapid decay. The Hessian is given by
\[-(t-s)\partial^{2}_{\eta}\Sigma_{1}(0,\eta)\]
and the curvature condition therefore ensures that
\[\text{det(Hess)}\geq{}|t-s|^{d-k}.\]
So away from $t=s$ the stationary points are non-degenerate. Therefore
\[\norm{I_{r}(\tau,t)I^{\star}_{r}(\tau,s)}_{L^{1}_{w}\to L^{\infty}_{z}} \lesssim \lambda^{-\frac{d-k}{2}}\left(\lambda^{-1}+|t-s|\right)^{-\frac{d-k}{2}}.\]
Interpolating we obtain
\[\norm{I_{r}(\tau,t)I^{\star}_{r}(\tau,t)}_{L^{p'}_{w}\to L^{p}_{z}}\lesssim \lambda^{-\frac{d-k}{2}\left(1+\frac{2}{p}\right)}(\lambda^{-1}+|t-s|)^{-\frac{d-k}{2}\left(1-\frac{2}{p}\right)},\]
and at $p=\frac{2(d-k)}{d-k-2}$
\[\norm{I_{r}(\tau,t)I^{\star}_{r}(\tau,s)}_{L^{\frac{2(d-k)}{d-k+2}}_{w}\to L^{\frac{2(d-k)}{d-k-2}}_{z}}\lesssim\lambda^{-(d-k-1)}(\lambda^{-1}+|t-s|)^{-1}.\]
Note that the power $-1$ is exactly the critical power to resolve the $|t-s|$ for an $L^{2}$ estimate. Since $t$ is restricted to the ball of radius $\lambda^{-\epsilon r}$ we can use Young's inequality and concede a $\log(\lambda)$ loss. That is
\[\norm{I_{r}(\tau,t)I^{\star}_{r}(\tau,s)}_{L^{\frac{2(d-k)}{d-k+2}}_{w}\to L^{\frac{2(d-k)}{d-k-2}}_{z}}\lesssim\lambda^{-(d-k-1)}\log(\lambda)\]
completing the proof of Proposition \ref{prop:TSbound2} for the cases $r\geq{}1$. Notice that for the estimate on $I_{r}$ we never used that $r\neq 0$ so this holds result also in the $r=0$ case. 

Let's complete the treatment of the $r=0$ case.  In this case the surface $\zeta=\Sigma_{1}(\bar{\xi},\eta)$ is essentially ``flat'' in the $\bar{\xi}$ coordinates. Again we will introduce $(\tau,\rho)$ integrated coordinates but in a slightly different fashion. We claim that
\begin{multline*}T_{0}=\frac{\lambda}{2\pi}\int e^{i\lambda\left(\langle \bar{y},\bar{\xi}\rangle+\langle z,\eta\rangle+t\Sigma_{1}(0,\eta)+(t-\tau)\rho+\tau(\Sigma_{1}(\bar{\xi},\eta)-\Sigma_{1}(0,\eta))\right)}\\
\times b(\bar{\xi},\eta)\chi_{0}(\nabla_{\bar{\xi}}\Sigma_{1}(\bar{\xi},\eta))a(\tau)a(\rho)f(\bar{\xi},\eta)d\bar{\xi}d\eta.\end{multline*}
Again this claim follows from an application of the stationary phase formula. The critical points for $(\tau,\rho)$ are at
\[\rho=\Sigma_{1}(\bar{\xi},\eta)-\Sigma_{1}(0,\eta)\quad t=\tau.\]
The localisation in $\nabla_{\bar{\xi}}\Sigma_{1}$ gives us that
\[|\Sigma_{1}(\bar{\xi},\eta)-\Sigma_{1}(0,\eta)|\leq \lambda^{-1+2\epsilon}.\]
Since the only critical point in is $\tau$ when
\[\rho=\Sigma_{1}(\bar{\xi},\eta)-\Sigma_{1}(0,\eta)\]
we can say that when $|\rho|\geq{}\lambda^{-1+3\epsilon}$ the phase is non-stationary and integration by parts yields an $O(\lambda^{-\infty})$ contribution. So 
\begin{multline*}T_{0}f=\frac{\lambda}{2\pi }\int e^{i\lambda\left(\langle \bar{y},\bar{\xi}+\langle z,\eta\rangle+t\Sigma_{1}(0,\eta)+(t-\tau)\rho+\tau(\Sigma_{1}(\bar{\xi},\eta)-\Sigma_{1}(0,\eta)\right)}\\
\times \chi_{0}(\nabla_{\bar{\xi}}\Sigma_{1}(\bar{\xi},\eta))a(\lambda^{1-3\epsilon}\rho)b(\bar{\xi},\eta)f(\bar{\xi},\eta)d\tau d\rho d\bar{\xi}d\eta+O(\lambda^{-\infty})\end{multline*}
We re-write this as
\[T_{0}f=I_{0}F(\bar{y},\cdot)\]
where
\[I_{0}g=\frac{\lambda^{\frac{1}{2}}}{2\pi}\int e^{i\lambda\left(\langle z,\eta\rangle+t\Sigma_{1}(0,\eta)\right)}a(\lambda^{1-3\epsilon} \rho)G(\rho,\tau,\eta)d\rho d\tau d\eta\]
\[F(\bar{y},\rho,\tau,\eta)=\lambda^{\frac{1}{2}}\int e^{i\lambda\left(-\tau\rho+\langle \bar{y},\bar{\xi}\rangle+\tau(\Sigma_{1}(\bar{\xi},\eta)-\Sigma_{1}(0,\eta))\right)}a\left(\frac{\lambda^{1-3\epsilon}\rho}{2}\right)f(\bar{\xi},\eta)d\bar{\xi}\]
We have already established that
\begin{equation}\norm{I_{0}g}_{L^{2}_{t}L^{p}_{z}}\lesssim \lambda^{-\frac{d-k-1}{2}+\frac{\epsilon}{2}}\log(\lambda)\norm{g}_{L^{2}_{\rho}L^{2}_{\tau}L^{2}_{\eta}}.\label{I0est}\end{equation}
So
\[\norm{T_{0}f}_{L^{2}_{\bar{y}}L^{2}_{t}L^{p}_{z}}\lesssim \lambda^{-\frac{d-k-1}{2}+\frac{\epsilon}{2}}\log(\lambda)\norm{F}_{L^{2}_{\bar{y}}L^{2}_{\rho}L^{2}_{\tau}L^{2}_{\eta}}.\]
Finally,
\[\norm{F}^{2}_{L^{2}_{\bar{y}}L^{2}_{\rho}L^{2}_{\tau}L^{2}_{\eta}}=\lambda\int e^{i\lambda\left\langle \bar{y},\bar{\xi}-\bar{\zeta}\rangle+\tau(\Sigma_{1}(\bar{\xi},\eta)-\Sigma_{1}(\bar{\zeta},\eta))\right)}f(\bar{\xi},\eta)a^{2}\left(\frac{\lambda^{1-3\epsilon}\rho}{2}\right)\bar{f}(\bar{\zeta},\eta)d\bar{y}d\rho d\tau \bar{\xi}d\bar{\zeta} d\eta.\]
The phase is only critical when $\bar{\xi}=\bar{\zeta}$ so an application of Schur's test (along with the small support for $\rho$) yields,
\[\norm{F}^{2}_{L^{2}_{\bar{y}}L^{2}_{\rho}L^{2}_{\tau}L^{2}_{\eta}}\lesssim \lambda^{-(k-1)+\frac{3\epsilon}{2}}\norm{f}_{L^{2}_{\bar{\xi}}L^{2}_{\eta}}.\]
Therefore
\[\norm{T_{0,\lambda}f}_{L^{2}_{\bar{y}}L^{2}_{t}L^{p}_{z}}\lesssim \lambda^{-\frac{d-2}{2}+2\epsilon}\log(\lambda)\norm{f}_{L^{2}_{\bar{\xi}}L^{2}_{\eta}}.\]

\end{proof}

\begin{prop}\label{prop:TSboundinf}
Suppose that for each $m$ the hypersurface $\zeta=\Sigma_{m}(0,\eta)$ sitting in $\R^{2}$ is curved. Then
    \begin{equation}
\norm{\prod_{m=1}^{d-1}\mathcal{E}_{m,\lambda}f_{m}}_{L^{\frac{4}{2d-3}}_{y}L^{\infty}_{z}}\lesssim\lambda^{-\frac{k(2d-3)}{4}+3\epsilon}\prod_{m=1}^{d-1}\norm{f_{m}}_{L^{2}(H_{m})}\end{equation}
\end{prop}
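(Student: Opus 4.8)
The plan is to follow the template of Proposition~\ref{prop:TSbound2}, specialised to the endpoint $k=d-1$, where $d-k=1$, the curved hypersurface $\zeta=\Sigma_m(0,\eta)$ is a curve in $\R^{2}$, and the Strichartz exponent $\frac{2(d-k)}{d-k-2}$ in $z$ degenerates to $p=\infty$. Write $x=(y,z)$ with $y\in\R^{d-1}$ and $z\in\R$, and for the $m$-th operator single out the coordinate $t=x_m$ as a time variable, so $y=(t,\bar y)$ with $\bar y\in\R^{d-2}$. I would first prove the single-factor mixed-norm estimate
\[\norm{\mathcal{E}_{m,\lambda}f}_{L^{2}_{\bar y}L^{4}_{t}L^{\infty}_{z}}\lesssim\lambda^{-\frac{2d-3}{4}+\epsilon}\norm{f}_{L^{2}(H_m)},\]
and then assemble the $d-1$ factors into the stated mixed norm by a Loomis--Whitney-type iterated H\"older inequality.

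For the single-factor estimate I would reuse the apparatus of Proposition~\ref{prop:TSbound2} essentially verbatim: parametrise $H_m$ by $\xi_m=\Sigma_m(\bar\xi,\eta)$ with $\bar\xi\in\R^{d-2}$ and $\eta\in\R$, decompose $\mathcal{E}_{m,\lambda}=T_0+\sum_{r=1}^{R}T_r$ according to the dyadic size of $\nabla_{\bar\xi}\Sigma_m$ (with $R\approx 1/\epsilon$ independent of $\lambda$), introduce the integrated variables $(\tau,\rho)$ via the stationary-phase identity, and factorise $T_rf=I_rF(\bar y,\cdot)$, working locally on intervals of length $\lambda^{-\epsilon(r-1)}$ in $t$ and $\tau$. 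The outer factor $F$ is handled by precisely the same Schur-test computation as before, giving $\norm{F}_{L^{2}_{\bar y}L^{2}_{\rho}L^{2}_{\tau}L^{2}_{\eta}}\lesssim\lambda^{-\frac{d-2}{2}+\frac{\epsilon}{2}}\norm{f}$; since $\bar y\in\R^{d-2}$, the exponent $\tfrac{k-1}{2}$ of Proposition~\ref{prop:TSbound2} has become $\tfrac{d-2}{2}$.

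The only genuinely new ingredient is the inner estimate for $I_r$, which is now the $L^{4}_{t}L^{\infty}_{z}$ endpoint of the one-dimensional Strichartz estimate associated with the curve $\zeta=\Sigma_m(0,\eta)\subset\R^{2}$. As in the proof of Proposition~\ref{prop:TSbound2} I would record the frozen-time bounds $\norm{I_r(\tau,t)I_r^{\star}(\tau,s)}_{L^2\to L^2}\lesssim\lambda^{-1}$ (the square of the $\lambda^{-1/2}$ curve-extension $L^2\to L^2$ bound) and, by stationary phase in the single variable $\eta$, whose $1\times1$ Hessian $-(t-s)\partial_\eta^2\Sigma_m(0,\eta)$ has determinant $\approx|t-s|$ by the curvature hypothesis, $\norm{I_r(\tau,t)I_r^{\star}(\tau,s)}_{L^1\to L^\infty}\lesssim\lambda^{-1/2}(\lambda^{-1}+|t-s|)^{-1/2}$. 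Here the power $-\tfrac12$ is exactly the critical power to resolve $|t-s|$ for an $L^4_t$ estimate (the analogue of the power $-1$ for $L^2_t$ in Proposition~\ref{prop:TSbound2}), so Young's inequality on the interval of length $\lambda^{-\epsilon(r-1)}$ concedes only a $\log\lambda$, yielding $\norm{I_r(\tau)}_{L^2_\eta\to L^4_tL^\infty_z}\lesssim\lambda^{-1/4}(\log\lambda)^{1/4}$. Composing this with the bound for $F$ and summing the $O(1)$ many $r$-pieces gives the single-factor estimate, the contributions $\tfrac14$ from the inner operator and $\tfrac{d-2}{2}$ from $F$ adding to $\tfrac{2d-3}{4}$.

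Finally I would combine the factors. For each fixed $z$, $\mathcal{E}_{m,\lambda}f_m(\cdot,z)$ is controlled in $L^{4}$ in its own coordinate $x_m$ and in $L^{2}$ in each of the remaining $d-2$ coordinates of $y$; applying H\"older successively in $x_1,\dots,x_{d-1}$, using $\tfrac14+(d-2)\tfrac12=\tfrac{2d-3}{4}$ at each step and Minkowski's inequality to interchange iterated norms (legitimate up to absolute constants since we work on $\B_1$), yields
\[\norm{\prod_{m=1}^{d-1}\mathcal{E}_{m,\lambda}f_m(\cdot,z)}_{L^{\frac{4}{2d-3}}_{y}}\lesssim\prod_{m=1}^{d-1}\lambda^{-\frac{2d-3}{4}+\epsilon}\norm{f_m}_{L^2(H_m)}\]
uniformly in $z$, which (as $k=d-1$) is the assertion after a harmless redefinition of $\epsilon$. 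The step I expect to be the main obstacle is precisely this last assembly: the decomposition $\mathcal{E}_{m,\lambda}=\sum_rT_r$ turns $\prod_m\mathcal{E}_{m,\lambda}f_m$ into a large but $\lambda$-independently bounded sum of products of $T_r$-pieces, and one must check that the single-factor mixed-norm bound holds uniformly across these pieces and that every Minkowski interchange needed to run the Loomis--Whitney argument is permissible; by comparison the curved one-dimensional Strichartz endpoint is classical and costs only the harmless $\log\lambda$.
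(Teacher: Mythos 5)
Your proposal follows the paper's strategy closely: iterated H\"older in the $y$-variables to split the product into single factors, then the same $T_r/I_r/F$ factorisation of Proposition~\ref{prop:TSbound2} applied to each factor, with the frozen-$\tau$ kernel bounds and the one-dimensional stationary-phase dispersion estimate $\norm{I_r(\tau,t)I_r^{\star}(\tau,s)}_{L^1\to L^\infty}\lesssim\lambda^{-1/2}(\lambda^{-1}+|t-s|)^{-1/2}$ at the heart of the $L^4_tL^\infty_z$ Strichartz bound. Two points of divergence are worth recording, one of which is a genuine (though repairable) gap.

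The gap is in the final assembly. You conclude with a bound
\[
\norm{\textstyle\prod_{m=1}^{d-1}\mathcal{E}_{m,\lambda}f_m(\cdot,z)}_{L^{\frac{4}{2d-3}}_{y}}\lesssim\lambda^{-\frac{(d-1)(2d-3)}{4}+\cdots}\prod_m\norm{f_m}_{L^2}
\]
\emph{uniformly in} $z$, and then assert that this is the desired estimate. But a bound uniform in $z$ controls the $L^\infty_z L^{4/(2d-3)}_y$ mixed norm, whereas the target is $L^{4/(2d-3)}_y L^\infty_z$, and since $4/(2d-3)<\infty$ the latter is the \emph{larger} of the two: in general $\sup_z\norm{g(\cdot,z)}_{L^p_y}\leq\norm{\sup_z|g(\cdot,z)|}_{L^p_y}$ and the inequality is strict (consider $g$ concentrated near $\{y=z\}$). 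So ``fixing $z$'' cannot yield the stated inequality. The repair, which is implicit in the paper's argument, is to \emph{not} fix $z$: first distribute the inner sup norm over the product, $\norm{\prod_m\mathcal{E}_mf_m(y,\cdot)}_{L^\infty_z}\leq\prod_m\norm{\mathcal{E}_mf_m(y,\cdot)}_{L^\infty_z}=:\prod_m g_m(y)$, and only then run the iterated H\"older/Minkowski argument in $y$ on the functions $g_m$. Since your single-factor estimate $\norm{\mathcal{E}_{m,\lambda}f}_{L^2_{\bar y}L^4_tL^\infty_z}\lesssim\lambda^{-(2d-3)/4+\cdots}$ already has $L^\infty_z$ innermost, this gives exactly what is needed; the change is entirely in the ordering of the steps, but as written your argument does not reach the stated mixed norm.

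A minor stylistic divergence: at the critical exponent you resolve the $(t,s)$-integral by Young's inequality on the localised interval, conceding a $(\log\lambda)^{1/4}$ factor. The paper instead invokes Hardy--Littlewood--Sobolev, which applies at the critical power $-1/2$ precisely because the target exponent $q=4$ is not $2$ (this is the case $\tfrac{q}{2}\sigma_p=-1$, $q\neq 2$ in the paper's abstract Strichartz scheme), and gives the bound with no logarithmic loss. Both are adequate for the $\lambda^{\epsilon}$-loss statement, but HLS is the cleaner choice here, and is the one the paper's framework anticipates.
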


\begin{proof}
This proof is structured in the same fashion as that of Proposition \ref{prop:TSbound2} but requires a little more set up. First any fixed $(y_{1},\dots,y_{d-2})$ we can use a $d-1$ product version of the H\"{o}lder estimate to show that
\begin{align*}\norm{\prod_{m=1}^{d-1}\mathcal{E}_{m,\lambda}f_{m}(y_{1},\dots,y_{d-2},\cdot)}_{L^{\frac{4}{2d-3}}_{y_{d-1}}L^{\infty}_{z}}&\leq \norm{\mathcal{E}_{d-1,\lambda}f_{d-1}(y_{1},\dots,y_{d-2},\cdot)}_{L_{y_{d-1}}^{\frac{4r}{2d-3}}L_{z}^{\infty}}\\
&\times \prod_{m=1}^{d-2}\norm{\mathcal{E}_{m,\lambda}f_{m}(y_{1},\dots,y_{d-2},\cdot)}_{L_{y_{d-1}}^{\frac{4s}{2d-3}}L_{z}^{\infty}},\end{align*}
for any $(r,s)$ so that
\[\frac{1}{r}+\frac{d-2}{s}=1.\]
In this case we pick
\[r=2d-3\quad s=\frac{2d-3}{2}\]
to arrive at
\begin{align*}\norm{\prod_{m=1}^{d-1}\mathcal{E}_{m,\lambda}f_{m}(y_{1},\dots,y_{d-2},\cdot)}_{L^{\frac{4}{2d-3}}_{y_{d-1}}L^{\infty}_{z}}&\leq \norm{\mathcal{E}_{d-1,\lambda}f_{d-1}(y_{1},\dots,y_{d-2},\cdot)}_{L_{y_{d-1}}^{4}L_{z}^{\infty}}\\
&\times\prod_{m=1}^{d-2}\norm{\mathcal{E}_{m,\lambda}f_{m}(y_{1},\dots,y_{d-2},\cdot)}_{L_{y_{d-1}}^{2}L_{z}^{\infty}}.\end{align*}
We repeat the argument with the $y_{d-2}$ variable to obtain
\begin{align*}    
\norm{\prod_{m=1}^{d-1}\mathcal{E}_{m,\lambda}f_{m}(y_{1},\dots,y_{d-3},\cdot)}_{L^{\frac{4}{2d-3}}_{(y_{d-2},y_{d-1})}L^{\infty}_{z}}&\leq \norm{\mathcal{E}_{d-1,\lambda}f_{d-1}(y_{1},\dots,y_{d-3},\cdot)}_{L_{y_{d-2}}^{2}L_{y_{d-1}}^{4}L_{z}^{\infty}}\\
&\times \norm{\mathcal{E}_{d-2,\lambda}f_{d-2}(y_{1},\dots,y_{d-3},\cdot)}_{L^{4}_{y_{d-2}}L_{y_{d-1}}^{2}L_{z}^{\infty}}\\
&\times \prod_{m=1}^{d-3}\norm{\mathcal{E}_{m,\lambda}f_{m}(y_{1},\dots,y_{d-3},\cdot)}_{L_{(y_{d-2},y_{d-1})}^{2}L_{z}^{\infty}}.\end{align*}
After repeating the process $k$ times we arrive at
\[\norm{\prod_{m=1}^{d-1}\mathcal{E}_{m,\lambda}f_{m}}_{L^{\frac{4}{2d-3}}_{y}L^{\infty}_{z}}\leq \prod_{m=1}^{k}\norm{\mathcal{E}_{m,\lambda}f_{m}}_{L^{2}_{y_{1}}\cdots L^{4}_{y_{m}}L^{2}_{y_{m+1}}\cdots L^{2}_{y_{k}}L^{\infty}_{z}}.\]
It is therefore enough to prove bounds separately on each factor
\[\norm{\mathcal{E}_{m,\lambda}f_{m}}_{L^{2}_{y_{1}}\cdots L^{4}_{y_{m}}L^{2}_{y_{m+1}}\cdots L^{2}_{y_{k}}L^{\infty}_{z}}.\]
By the mixed Lebesgue space version of Minkowski's inequality we may permute the order of the iterated integrals so long as we move larger weightings to inner integrals \cite{F87}. Therefore it is enough to control
\begin{equation}\norm{\mathcal{E}_{1,\lambda}f_{1}}_{L^{2}_{\bar{y}}L^{4}_{y_{1}}L^{\infty}_{z}}\label{mixedcontrol}\end{equation}
where we have again written $\bar{y}=(y_{2},\dots,y_{d-1})$. We now proceed in a similar fashion to the proof of Proposition \ref{prop:TSbound2}. The same argument reduces controlling \eqref{mixedcontrol} to controlling the $L^{2}_{\eta}\to L^{4}_{y_{1}}L^{\infty}_{z}$ mapping norm of
\[I_{r}(\tau,t)f=\int e^{i\lambda\left(\langle z,\eta\rangle +t\Sigma_{1}(0,\eta)\right)+\lambda^{\epsilon (r+1)}t\rho}a(\rho)a(\lambda^{\epsilon (r-1)}(t-\tau))f(\rho,\tau,\eta)d\rho d\eta.\]
As before we obtain the $L^{1}\to L^{\infty}$ estimate by writing 
\[I_{r}(\tau,t)I^{\star}(\tau,s)g=\int e^{i\lambda\left( \langle z-w,\eta\rangle+t\Sigma_{1}(0,\eta)\right)+\lambda^{\epsilon r}(t-s)\rho}a^{2}(\rho)g(w)d\eta dw\]
and computing  $\eta$ integral via stationary phase. We have
\[\norm{I_{r}(\tau,t)I_{r}(\tau,t)}_{L^{1}_{w}\to L^{\infty}_{z}}\lesssim \lambda^{-\frac{1}{2}}(\lambda^{-1}+|t-s|)^{-\frac{1}{2}}.\]
We then resolve the time integral via Hardy-Littlewood-Sobolev. Unsurprisingly the power $-\frac{1}{2}$ is exactly the critical one for an $L^{4}$ norm. Then 
\[\norm{I_{r}(\tau,\cdot)}_{L^{2}_{\rho}L^{2}_{\eta}\to L^{4}_{t}L^{\infty}_{z}}\lesssim \lambda^{-\frac{1}{4}}\]
and so
\[\norm{\mathcal{E}_{1,\lambda}}_{L^{2}_{\bar{\xi}}L^{2}_{\eta}\to L^{2}_{\bar{y}}L^{4}_{y_{d-1}}L^{\infty}_{z}}\lesssim \lambda^{-\frac{2d-3}{4}+3\epsilon}.\]
    \end{proof}

Finally we can state the full $k$-linear theorem.

\begin{thm}\label{thm:mixedtranscurv}
   Let $\lambda\geq{}1$ and $\epsilon>0$. Suppose $H_{1},\dots,H_{k}$ are a set of hypersurfaces in $\R^{d}$ whose normals $\nu_{m}$ obey a uniform transversality condition
   \[|\nu_{1}(\xi^{1})\wedge \cdots \wedge \nu_{k}(\xi^{m})|\geq{}c.\]
   For $\boldsymbol{\xi}=(\xi^{1},\dots,\xi^{k})$ with $\xi^{m}\in\mathcal{H}_{m}$ denote the subspace spanned by the  $\nu_{m}(\xi^{m})$ as $\mathcal{Z}_{\boldsymbol{\xi}}$ and $\gamma_{\mathcal{Z^{\perp}_{\boldsymbol{\xi}}}}$ the projection onto $\mathcal{Z^{\perp}_{\boldsymbol{\xi}}}$.  Then if for each $m$ the  shape operator $S_{m}$ of $H_{m}$ obeys the curvature conditions that $\gamma^{\star}_{\mathcal{Z}_{\boldsymbol{\xi}}^{\perp}}S_{m}\gamma_{\mathcal{Z}_{\boldsymbol{\xi}}^{\perp}}$ is non-degenerate,
   \[\norm{\prod_{m=1}^{k}\mathcal{E}_{m,\lambda}f_{m}}_{L^{\frac{p(k)}{k}}}\lesssim \lambda^{-\frac{p(k)}{k}+\epsilon}\prod_{m=1}^{k}\norm{f_{m}}_{L^{2}}\quad p(k)=\frac{2(d+k)}{d-k-2}\]
   
\end{thm}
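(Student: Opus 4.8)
The plan is to deduce Theorem~\ref{thm:mixedtranscurv} by interpolating the two mixed-norm estimates already established: the transversality-driven bound of Proposition~\ref{prop:klintrans}, controlling $\prod_{m=1}^{k}\mathcal{E}_{m,\lambda}f_m$ in $L^{\frac{2}{k-1}}_yL^{\frac{2}{k}}_z$ with constant $\lambda^{-\frac{k(d-1)}{2}+\delta}\prod_m\norm{f_m}_{L^2}$, and the curvature-driven Tomas--Stein bound of Proposition~\ref{prop:TSbound2}, controlling it in $L^{\frac{2}{k}}_yL^{\frac{2(d-k)}{k(d-k-2)}}_z$ with constant $\lambda^{-\frac{k(d-2)}{2}+3\epsilon}\prod_m\norm{f_m}_{L^2}$ when $k\le d-2$ (and Proposition~\ref{prop:TSboundinf}, giving the $L^{\frac{4}{2d-3}}_yL^{\infty}_z$ bound when $k=d-1$). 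First I would set up the coordinate frame exactly as in the discussion preceding Proposition~\ref{prop:klintrans}: Gram--Schmidt applied to $\nu_1(\xi^1),\dots,\nu_k(\xi^k)$ produces $e_1,\dots,e_k$ spanning $\mathcal{Z}_{\boldsymbol{\xi}}$, one writes $x=(y,z)$ with $y\in\R^k$, $z\in\R^{d-k}$, and parametrises $H_m$ by $\xi_m=\Sigma_m$. In these coordinates the hypothesis that $\gamma^{\star}_{\mathcal{Z}_{\boldsymbol{\xi}}^{\perp}}S_{m}\gamma_{\mathcal{Z}_{\boldsymbol{\xi}}^{\perp}}$ is non-degenerate is precisely the statement that the sliced hypersurface $\zeta=\Sigma_m(0,\eta)$ --- sitting in $\R^{d-k+1}$ when $k\le d-2$, in $\R^2$ when $k=d-1$ --- is curved, which is the hypothesis required by Propositions~\ref{prop:TSbound2} and \ref{prop:TSboundinf}. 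So under the hypotheses of the theorem both endpoint estimates are in force.

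For the interpolation, the clean route is to use, for a \emph{fixed} function $g$, the mixed-norm log-convexity inequality $\norm{g}_{L^{r_\theta}_yL^{s_\theta}_z}\le\norm{g}_{L^{r_0}_yL^{s_0}_z}^{\,1-\theta}\norm{g}_{L^{r_1}_yL^{s_1}_z}^{\,\theta}$ with $\tfrac1{r_\theta}=\tfrac{1-\theta}{r_0}+\tfrac{\theta}{r_1}$ and similarly in $s$; this is just H\"older applied in $z$ and then in $y$, and it holds for \emph{all} positive exponents since the conjugate exponents that appear are automatically $\ge1$ (e.g. $\tfrac1{r_\theta}\ge\tfrac{1-\theta}{r_0}$ gives $(1-\theta)r_\theta\le r_0$). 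Apply this to $g=\prod_{m=1}^k\mathcal{E}_{m,\lambda}f_m$, putting weight $\theta$ on the Proposition~\ref{prop:klintrans} endpoint and $1-\theta$ on the Tomas--Stein endpoint, with $\theta$ and $(r_1,s_1)$ chosen as in the discussion above so that $r_\theta=s_\theta=\frac{p(k)}{k}$; the interpolated space is then the isotropic $L^{\frac{p(k)}{k}}(\B_1)$, and the constant is $\lambda^{-(\theta\frac{k(d-1)}{2}+(1-\theta)\frac{k(d-2)}{2})+O(\epsilon)}\prod_m\norm{f_m}_{L^2}$, the $O(\epsilon)$ absorbing the $\delta,\epsilon$ of the two inputs. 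It then remains to do the arithmetic check that the exponent $\theta\frac{k(d-1)}{2}+(1-\theta)\frac{k(d-2)}{2}$ equals the power of $\lambda$ asserted in the theorem (equivalently, that $\frac{p(k)}{k}$ lies on the interpolation segment for a $\theta\in(0,1)$, together with the stated value of $s_1$ being positive --- which is where the restriction $k\le d-2$ is used). The case $k=d-1$ is identical, using the second endpoint $L^{\frac{4}{2d-3}}_yL^{\infty}_z$, $s_1=\infty$, $\theta=\frac{2d-3}{2d-1}$; it is already a theorem of Bejenaru \cite{B22} and is included only for completeness (while $k=d$ is the pure-transversality case of Section~\ref{sec:ML}). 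Finally, since Proposition~\ref{prop:klintrans} holds for all $\delta>0$ and Propositions~\ref{prop:TSbound2}--\ref{prop:TSboundinf} for all $\epsilon>0$, the $O(\epsilon)$ loss becomes an arbitrary $\epsilon>0$, which is the claimed estimate.

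A few mechanical remarks. The log-convexity argument above avoids any quasi-Banach subtlety, even though $\frac2{k-1},\frac2k,\frac{p(k)}k$ may be below $1$. If one instead wished to interpolate the multilinear map $\mathbf f\mapsto\prod_m\mathcal{E}_{m,\lambda}f_m$ directly, one would invoke interpolation of mixed-norm Lebesgue spaces with sub-$1$ exponents \cite{BP61}, valid for simple functions, and extend to all $f_m$ by $L^\infty$-approximation on the compact ball $\B_1$ (hence in every mixed norm and in $L^2$) --- the caveat already noted in the discussion of mixed norms --- and one may also need Minkowski's inequality for mixed norms to reorder iterated integrals, as in Proposition~\ref{prop:TSboundinf}. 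I do not anticipate any genuine obstacle: once Propositions~\ref{prop:klintrans}, \ref{prop:TSbound2} and \ref{prop:TSboundinf} are granted, the whole content of the theorem is this interpolation bookkeeping, and the only point deserving real attention is the arithmetic matching the interpolated exponents and $\lambda$-power to $\frac{p(k)}{k}$ and the target power --- a direct computation.
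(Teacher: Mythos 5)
Your proposal is correct and takes essentially the same approach as the paper's own (very brief) proof: translate the curvature hypothesis $\gamma^{\star}_{\mathcal{Z}_{\boldsymbol{\xi}}^{\perp}}S_{m}\gamma_{\mathcal{Z}_{\boldsymbol{\xi}}^{\perp}}$ non-degenerate into curvature of the sliced surfaces $\zeta=\Sigma_{m}(0,\eta)$, then interpolate between Proposition~\ref{prop:klintrans} and Proposition~\ref{prop:TSbound2} for $k\le d-2$ (or Proposition~\ref{prop:TSboundinf} for $k=d-1$) with the weights and exponents already worked out in the text. The one genuine refinement you offer is mechanistic: carrying out the interpolation by the elementary log-convexity inequality
\[\norm{g}_{L^{r_\theta}_yL^{s_\theta}_z}\le\norm{g}_{L^{r_0}_yL^{s_0}_z}^{1-\theta}\norm{g}_{L^{r_1}_yL^{s_1}_z}^{\theta},\]
valid by two applications of H\"older for \emph{all} positive exponents, applied to the fixed function $g=\prod_m\mathcal{E}_{m,\lambda}f_m$. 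This avoids the quasi-Banach and simple-function caveats that would otherwise arise from the general mixed-norm interpolation theory of Benedek--Panzone cited in the surrounding text, and is a cleaner argument than invoking that machinery.

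One thing to be aware of when you carry out the arithmetic you have deferred: as printed, the theorem has two typographical slips. The exponent should read $p(k)=\frac{2(d+k)}{d+k-2}$; with $d-k-2$ in the denominator, $p(k)<0$ for $k\ge d-1$ and the interpolation relations in the surrounding discussion cannot be satisfied, whereas with $d+k-2$ one recovers exactly $\theta=\frac{2k}{d+k}$ and $r_{2}=\frac{2(d-k)}{k(d-k-2)}$ as claimed there (and $p(d)=\frac{2d}{d-1}$, $\frac{p(2)}{2}=\frac{d+2}{d}$, the familiar endpoints). Likewise the power of $\lambda$ should be $\lambda^{-dk/p(k)+\epsilon}$, as in the parameter-dependent conjecture \eqref{nearop}, not $\lambda^{-p(k)/k+\epsilon}$. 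With these corrections the bookkeeping does close: the interpolated exponent is
\[\theta\cdot\tfrac{k(d-1)}{2}+(1-\theta)\cdot\tfrac{k(d-2)}{2}=\tfrac{k(d-2)}{2}+\tfrac{k^{2}}{d+k}=\tfrac{kd(d+k-2)}{2(d+k)}=\tfrac{dk}{p(k)},\]
and since $\delta$ in Proposition~\ref{prop:klintrans} and $\epsilon$ in Propositions~\ref{prop:TSbound2}--\ref{prop:TSboundinf} are arbitrary, the loss is an arbitrary $\lambda^{\epsilon}$, as required.
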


\begin{proof}
The geometric assumption on $\gamma^{\star}_{\mathcal{Z}_{\boldsymbol{\xi}}^{\perp}}S\gamma_{\mathcal{Z}_{\boldsymbol{\xi}}^{\perp}}$ ensures that for any $\bar{\xi}$ the section $\zeta=\Sigma_{m}(\bar{\xi},\eta)$ is curved when seen as a surface in $\R^{d-k+1}$, in particular when $\bar{\xi}=0$. So the results of Propositions \ref{prop:TSbound2} and \ref{prop:TSboundinf} hold. Therefore for $k\leq d-2$ interpolate between the results of \ref{prop:klintrans} and \ref{prop:TSbound2} and for $k=d-1$ interpolate between the results of \ref{prop:klintrans} and \ref{prop:TSboundinf}.
\end{proof}

\begin{remark}
    A natural question now is; could the $\lambda^{\epsilon}$ loss be removed? For $p>p(k)$ this is likely to be possible. In the proofs of Propositions \ref{prop:TSbound2} and \ref{prop:TSboundinf} we gave up factors of $\lambda^{\epsilon}$, but this was largely for convenience. It is likely that those arguments could be replaced with a dyadic cut off around critical points. The $\lambda^{\epsilon}$ loss in Proposition \ref{prop:klintrans} is more difficult to remove, it comes from Theorem \ref{thm:simsat}. So to remove the $\lambda^{\epsilon}$ factor there we would need to produce a new version of the proof of Theorem \ref{thm:simsat} that allowed us to work with points that are $\lambda^{-1}$ separated, rather than $\lambda^{-1+\epsilon}$. For such points we would be unable to get $O(\lambda^{-\infty})$ decay if $p\CC_{m}q$ does not hold but rather would get decay in the form $(1+\lambda^{-1}|p-q|)^{-R}$ for any $R\in\N$. Therefore when analysing the loops we would not be able to make the simple division into trivial loops, for which we accept the trivial bounds  and non-trivial loops, which make negligible contribution. We would need to do a finer analysis about how many non-trivial links in the loops, gaining decay as we look at loops with more non-trivial links. Since the number of points $q$ at a fixed distance from $p$ grows at a fixed polynomial rate and the decay rate is better than any polynomial it is likely that this sort of strategy would work. However it would add significant complication to the proof of Theorem \ref{thm:simsat} so we do not pursue it here.
\end{remark}

\bibliography{references}
\bibliographystyle{plain}

\end{document}